\newtheorem{theorem}{Theorem}[section]
\newtheorem{proposition}[theorem]{Proposition}
\newtheorem{corollary}[theorem]{Corollary}
\newtheorem{lemma}[theorem]{Lemma}
\theoremstyle{definition}
\newtheorem{definition}[theorem]{Definition}
\newtheorem{example}[theorem]{Example}
\newtheorem{question}[theorem]{Question}
\newtheorem{remark}[theorem]{Remark}
\newcommand{\PP}{\mathbb{P}}
\newcommand{\QQ}{\mathbb{Q}}
\newcommand{\CC}{\mathbb{C}}
\newcommand{\ZZ}{\mathbb{Z}}
\newcommand{\NN}{\mathbb{N}}
\newcommand{\cO}{\mathcal{O} }
\newcommand{\cB}{\mathcal{B} }
\newcommand{\cC}{\mathcal{C} }
\newcommand{\cM}{\mathcal{M} }
\newcommand{\cU}{\mathcal{U} }
\newcommand{\spec}{\mathrm{Spec}\;}
\newcommand{\proj}{\mathrm{Proj}\;}
\newcommand{\im}{\mathrm{im}}
\def\Mzn{\overline{\mathrm{M}}_{0,n} }
\def\Mza{\overline{\mathrm{M}}_{0,A} }
\def\Mzf{\overline{\mathrm{M}}_{0,5} }
\def\Mzs{\overline{\mathrm{M}}_{0,6} }
\def\Mzv{\overline{\mathrm{M}}_{0,7} }
\def\Uznprd{\overline{\mathrm{U}}_{0,n}(\PP^{r}, d) }
\def\Uznpdd{\overline{\mathrm{U}}_{0,n}(\PP^{d}, d) }
\def\Uzvptt{\overline{\mathrm{U}}_{0,7}(\PP^{3}, 3) }
\def\Uzzptt{\overline{\mathrm{U}}_{0,0}(\PP^{3}, 3) }
\def\cUznprd{\overline{\cU}_{0,n}(\PP^{r}, d) }
\def\cUznpdd{\overline{\cU}_{0,n}(\PP^{d}, d) }
\def\cUzzptt{\overline{\cU}_{0,0}(\PP^{3}, 3) }
\def\Mznpdd{\overline{\mathrm{M}}_{0,n}(\PP^{d}, d) }
\def\Mzvptt{\overline{\mathrm{M}}_{0,7}(\PP^{3}, 3) }
\def\Mzzptt{\overline{\mathrm{M}}_{0,0}(\PP^{3}, 3) }
\def\cMznpdd{\overline{\cM}_{0,n}(\PP^{d}, d) }
\def\cMzzptt{\overline{\cM}_{0,0}(\PP^{3}, 3) }
\def\cMg{\overline{\cM}_{g} }
\def\SL{\mathrm{SL}}
\def\git{/\!/ }
\begin{document}

\title{Mori's program for $\Mzv$ with symmetric divisors}
\date{\today}
\author{Han-Bom Moon}
\address{Department of Mathematics, Fordham University, Bronx, NY 10458}
\email{hmoon8@fordham.edu}

\begin{abstract}
We complete Mori's program with symmetric divisors for the moduli space of stable seven pointed rational curves. We describe all birational models in terms of explicit blow-ups and blow-downs. We also give a moduli theoretic description of the first flip, which have not appeared in literature.
\end{abstract}

\maketitle


\section{introduction}

The aim of this paper is running \textbf{Mori's program} for $\Mzv$, the moduli space of stable seven-pointed rational curves. Mori's program, a minimal model program for a given moduli space $M$, consists of following: 1) Compute the cone of effective divisors $\mathrm{Eff}(M)$ for $M$ and the chamber structure on it, so called the stable base locus decomposition. 2) For an effective divisor $D$ we may compute a projective model 
\[
	M(D) := \proj \bigoplus_{m \ge 0}H^{0}(M, \cO(mD))
\]
with a rational contraction $M \dashrightarrow M(D)$. Because any rational contraction is obtained in this way (\cite{HK00}), by running Mori's program we are able to classify all birational models of $M$ which are simpler than $M$. Furthermore, since $M$ is a moduli space, we may expect that some of $M(D)$ also have certain good moduli theoretic interpretations. 

Since Hassett and Hyeon initiated the study of birational geometry of moduli spaces of stable curves in a viewpoint toward Mori's program in \cite{Has05, HH09, HH13}, there has been a great amount of success and progress in this direction. Although the initial motivation, finding the (final log) canonical models of moduli spaces of stable curves $\cMg$ succeeded only for a few small genera \cite{Has05, HL10, Fed12, FS13}, but there have constructed many modular birational models of $\cMg$ and they have been studied in a theoretical framework of Mori's program. Also the same framework has been applied to many other moduli spaces for instance Hilbert scheme of points (\cite{ABCH13}) and the moduli space of stable maps (\cite{Che08b, CC10, CC11}). 

We are interested in running Mori's program for $\Mzn$, the moduli space of stable $n$-pointed rational curves. Since $\dim \mathrm{N}^{1}(\Mzn)_{\QQ}$ grows exponentially, it is almost impossible to determine all birational models even for very small $n$. But if we restrict ourselves to the space $\mathrm{N}^{1}(\Mzn)_{\QQ}^{S_{n}}$ of $S_{n}$-invariant divisors (or symmetric divisors), then the dimension grows linearly. Thus we may try to classify all birational models appear in Mori's program at least for small $n$. 

The first non-trivial case is $n = 6$ and it was investigated in \cite{Moo13a}. In this case, there are two divisorial contractions and no flip. These two contractions are classically well-known varieties so called Segre cubic and Igusa quartic. The next case $n = 7$, which we study in this paper, is interesting because there are two flips of $\Mzv$. It seems that in literature, there has been no description of these spaces. 

\subsection{The first main result - Mori's program}

In the first half of this paper, we classify all projective models appear in Mori's program. In this case $\dim \mathrm{N}^{1}(\Mzv)_{\QQ}^{S_{7}} = 2$ and $\mathrm{Eff}(\Mzv)$ is generated by two boundary divisors $B_{2}$ and $B_{3}$. To describe the result in an effective way, we use the interval notation for divisor classes. For two divisor classes $D_{1}$ and $D_{2}$, $[D_{1}, D_{2})$ is the set of all divisor classes $aD_{1} + bD_{2}$ where $a \ge 0$ and $b > 0$. Similarly, we can define $(D_{1}, D_{2})$, $(D_{1}, D_{2}]$, and $[D_{1}, D_{2}]$ as well. All divisor classes below are defined in Section \ref{sec:divcurve}. We describe the flipping locus $B_{2}^{3}$ and $B_{2}^{2}$ later in this section.

\begin{theorem}\label{thm:Moriprogramintro}(Theorem \ref{thm:Moriprogram})
Let $D$ be a symmetric effective divisor of $\Mzv$. Then:
\begin{enumerate}
\item If $D \in (\psi - K_{\Mzv}, K_{\Mzv} + \frac{1}{3}\psi)$, $\Mzv(D) \cong \Mzv$. 
\item If $D \in [K_{\Mzv}+\frac{1}{3}\psi, B_{3})$, $\Mzv(D) \cong \Mza$, the moduli space of weighted pointed stable curves with weight $A = \left(\frac{1}{3}, \cdots, \frac{1}{3}\right)$.
\item If $D = \psi - K_{\Mzv}$, $\Mzv(D)$ is isomorphic to the Veronese quotient $V_{A}^{3}$ where $A = \left(\frac{4}{7}, \cdots, \frac{4}{7}\right)$.
\item If $D \in (\psi - 3K_{\Mzv}, \psi - K_{\Mzv})$, $\Mzv(D) \cong \Mzv^{3}$, which is a flip of $\Mzv$ over $V_{A}^{3}$. The flipping locus is $B_{2}^{3}$.
\item If $D = \psi - 3K_{\Mzv}$, $\Mzv(D)$ is a small contraction of $\Mzv^{3}$.
\item If $D \in (\psi - 5K_{\Mzv}, \psi - 3K_{\Mzv})$, $\Mzv(D) \cong \Mzv^{2}$, which is a flip of $\Mzv^{3}$ over $\Mzv(\psi - 3K_{\Mzv})$. The flipping locus is the proper transform of $B_{2}^{2}$.
\item If $D \in (B_{2}, \psi - 5K_{\Mzv}]$, $\Mzv(D) \cong \Mzv^{1}$, which is a divisorial contraction of $\Mzv^{2}$. The contracted divisor is the proper transform of $B_{2}$.
\item If $D = B_{2}$ or $B_{3}$, $\Mzv(D)$ is a point.
\end{enumerate}
\end{theorem}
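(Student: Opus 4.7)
The plan is to traverse the two-dimensional symmetric effective cone $\mathrm{Eff}(\Mzv)^{S_{7}} = \langle B_{2}, B_{3}\rangle$ from the $B_{3}$-edge toward the $B_{2}$-edge, identifying the walls of the stable base locus / Mori chamber decomposition and, in each resulting chamber, producing a candidate birational model $M_{i}$ with a rational contraction $\Mzv \dashrightarrow M_{i}$. For each such $M_{i}$ I would compute the pullback of an ample class from $M_{i}$ to $\Mzv$ and verify that it lies in the interior of the prescribed interval; uniqueness of the Mori model on a chamber then identifies $\Mzv(D)$ with $M_{i}$ for every $D$ in that interval.

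For the right part of the cone, the interval $(\psi - K_{\Mzv}, K_{\Mzv}+\tfrac{1}{3}\psi)$ should coincide with the symmetric ample cone of $\Mzv$, so $\Mzv$ is its own Mori model there. Crossing the wall $K_{\Mzv}+\tfrac{1}{3}\psi$ should trigger Hassett's contraction $\Mzv \to \Mza$ with $A=(\tfrac{1}{3},\ldots,\tfrac{1}{3})$, whose exceptional divisor is $B_{3}$, and $K_{\Mzv}+\tfrac{1}{3}\psi$ is recognized as the pullback of an ample generator on $\Mza$. At the opposite wall $\psi - K_{\Mzv}$, the divisor is semi-ample but not ample, and the induced small contraction lands on the Veronese quotient $V_{A}^{3}$ with weight $A=(\tfrac{4}{7},\ldots,\tfrac{4}{7})$. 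These three identifications should follow from Hassett's theory of weighted pointed stable curves together with the work of Giansiracusa--Gibney on Veronese quotients, once the pullback classes are computed.

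The heart of the argument is the left portion $(B_{2}, \psi - K_{\Mzv})$ of the cone, where two flips and one divisorial contraction must be produced by hand. I would build $\Mzv^{3}$ as an explicit small modification over $V_{A}^{3}$ — for example as a different GIT stability chamber of a natural parameter space, or as a moduli space of stable maps with a different weighting — verify that $\Mzv \dashrightarrow \Mzv^{3}$ is a small birational map that contracts precisely the locus $B_{2}^{3}\subset B_{2}$, and then repeat the procedure for $\Mzv^{3}\dashrightarrow \Mzv^{2}$ with flipping locus (the proper transform of) $B_{2}^{2}$. Finally, the morphism $\Mzv^{2}\to \Mzv^{1}$ should collapse the proper transform of the full boundary $B_{2}$, and an ample class on $\Mzv^{1}$ must be shown to pull back into $(B_{2}, \psi - 5K_{\Mzv}]$. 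The walls $\psi - 3K_{\Mzv}$ and $\psi - 5K_{\Mzv}$ are then pinned down as the classes at which the numerical pullbacks lose ampleness.

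The main obstacle is the explicit construction of the flips $\Mzv^{3}$ and $\Mzv^{2}$ together with a moduli-theoretic interpretation of at least the first flip (promised separately as the second main result of the paper), and the correct identification of the $S_{7}$-invariant flipping loci $B_{2}^{3}$ and $B_{2}^{2}$ sitting inside $B_{2}$. Once these intermediate models are in hand, the remaining task is numerical: pull an ample class back to $\Mzv$, intersect it against a spanning set of $S_{7}$-invariant curve classes, and confirm it lies in the predicted interval, ruling out any spurious extra wall between two consecutive predicted walls. This last verification reduces to a finite intersection-number check together with the explicit stable base locus analysis of $\mathrm{Eff}(\Mzv)^{S_{7}}$.
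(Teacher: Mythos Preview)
Your overall strategy --- walk across the two-dimensional symmetric effective cone, locate the walls via the stable base locus decomposition, and in each chamber exhibit a model whose ample class pulls back into that chamber --- is exactly the paper's plan, and your treatment of Items (1)--(3) and (8) matches the paper's (nef cone computation, Hassett's reduction for $\Mza$, Giansiracusa--Jensen--Moon for $V_{A}^{3}$, rigidity of $B_{2},B_{3}$).

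The substantive divergence is in how you propose to produce $\Mzv^{3}$ and $\Mzv^{2}$. You suggest building them ``as a different GIT stability chamber of a natural parameter space, or as a moduli space of stable maps with a different weighting.'' The second option cannot work: the paper recalls Smyth's classification to note that every modular compactification of $\mathrm{M}_{0,n}$ as a moduli space of pointed curves is a \emph{contraction} of $\Mzn$, never a flip, so no reweighting of stable maps/curves will produce $\Mzv^{3}$. The GIT option is in fact what the paper pursues, but only for the \emph{second} main theorem (the modular description of $\Mzv^{3}$), and it requires the Kim--Kresch--Oh moduli of unramified stable maps plus an ad hoc ``formal GIT'' argument; no such description is given for $\Mzv^{2}$ at all.

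For the proof of the theorem itself the paper takes the direct birational-geometry route that your proposal does not mention: compute the normal bundle of each connected component of the flipping locus, blow it up, and check that the exceptional divisor admits a second projective bundle structure that can be blown down. Concretely, each $F_{1,2,2,2}$ has normal bundle $\cO_{\PP^{1}}(-1)^{3}$, so the blow-up has exceptional divisor $\PP^{1}\times\PP^{2}$ with normal bundle $\cO(-1,-1)$, and contracting the $\PP^{1}$ direction yields $\Mzv^{3}$; the second flip is a more elaborate four-step blow-up/down sequence producing a singular $\Mzv^{2}$. The ampleness checks you describe at the end are then carried out on these explicit models. So the gap in your proposal is not the global architecture but the absence of a workable mechanism for actually constructing the flips; the paper's mechanism is elementary normal-bundle surgery rather than GIT or moduli.
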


Some of these results are already well-known. The birational models in Items (1) through (3) are models appear in \cite{Has03, GJM13} and they have certain moduli theoretic meaning. Also Mori's program for $\Mzn$ for a subcone generated by $K_{\Mzn}$ and $B = \sum B_{i}$ has been intensively studied in \cite{Sim08, FS10, KM11, AS12} for arbitrary $n$. For $n = 7$, this subcone covers Items (1) and (2). Thus the new result is the opposite direction, Items (3) through (7). 

Along this direction, the chain of birational maps $\Mzv \dashrightarrow \Mzv^{3} \dashrightarrow \Mzv^{2} \to \Mzv^{1}$ shows interesting toroidal birational modifications. On $\Mzv$, $B_{2}$ is a simple normal crossing divisor and at most three irreducible components meet together. Let $B_{2}^{i}$ be the union of nonempty intersections of $i$ irreducible components of $B_{2}$. For $\Mzv \dashrightarrow \Mzv^{3}$, $B_{2}^{3}$ is the flipping locus and on $\Mzv^{3}$ no three irreducible components of $B_{2}$ intersect. For $\Mzv^{3} \dashrightarrow \Mzv^{2}$, the flipping locus is the proper transform of $B_{2}^{2}$ and on $\Mzv^{2}$, irreducible components of $B_{2}$ are disjoint. Finally, on $\Mzv^{2} \to \Mzv^{1}$, the modified locus is the proper transform of $B_{2}^{1}=B_{2}$, the disjoint union of irreducible components and it is a divisorial contraction.

Very recently, Castravet and Tevelev proved in \cite{CT13b} that $\Mzn$ is not a Mori dream space if $n$ is large. However, since the effective cone of $\Mzn/S_{n}$ is simplicial and generated by boundary divisors $B_{i}$ for $2 \le i \le \lfloor \frac{n}{2}\rfloor$, it is believed that $\Mzn/S_{n}$ is a Mori dream space. Because Mori's program of $\Mzn$ with symmetric divisors can be identified with that of $\Mzn/S_{n}$ (\cite[Lemma 6.1]{Moo13a}), we obtain the following result. 

\begin{corollary}
The $S_{7}$-quotient $\Mzv/S_{7}$ is a Mori dream space.
\end{corollary}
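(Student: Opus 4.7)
The plan is to deduce the corollary as an essentially immediate consequence of Theorem \ref{thm:Moriprogramintro} via the Hu--Keel characterization of Mori dream spaces (\cite{HK00}). First I would invoke \cite[Lemma 6.1]{Moo13a} to identify Mori's program for $\Mzv/S_{7}$ with the symmetric Mori program for $\Mzv$. Since $\Mzv/S_{7}$ is the quotient of a smooth projective variety by a finite group, it is normal, projective, and $\QQ$-factorial, and its Picard group is finitely generated, so all the standing hypotheses of the Hu--Keel theorem are satisfied.

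Next I would read off the chamber structure directly from Theorem \ref{thm:Moriprogramintro}. The effective cone $\mathrm{Eff}(\Mzv)^{S_{7}}$ is the two-dimensional cone spanned by $B_{2}$ and $B_{3}$, hence rational polyhedral. Items (1)--(8) subdivide this cone into finitely many rational polyhedral sub-chambers, one for each distinct projective model, and the interior-of-chamber walls are precisely the two flipping walls at $D = \psi - K_{\Mzv}$ and $D = \psi - 3K_{\Mzv}$ together with the divisorial wall at $D = \psi - 5K_{\Mzv}$. By the theorem each of the models $\Mzv$, $\Mzv^{3}$, $\Mzv^{2}$, $\Mzv^{1}$, $\Mza$, $V_{A}^{3}$ is a normal projective variety, and the birational chain $\Mzv \dashrightarrow \Mzv^{3} \dashrightarrow \Mzv^{2} \to \Mzv^{1}$ realizes finitely many small $\QQ$-factorial modifications followed by a divisorial contraction whose Nef cones piece together to give the closure of the movable cone. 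This is exactly the Mori chamber structure in the sense of Hu--Keel.

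The only verification one has to make is that the ample/semiample cone of each intermediate model $\Mzv^{i}$ (pulled back to $\Mzv$) fills up the corresponding chamber in the list; this is built into the construction of $\Mzv(D)$ as $\proj \bigoplus_{m \ge 0} H^{0}(\Mzv, \cO(mD))$ and the fact that the theorem asserts $\Mzv(D) \cong \Mzv^{i}$ on an entire open chamber, i.e.\ all $D$ in a chamber are semiample (modulo finite base change). Combining this with the finiteness of the list of chambers yields finite generation of the Cox ring of $\Mzv/S_{7}$, which is the definition of a Mori dream space.

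The main obstacle is not the corollary itself but rather the underlying Theorem \ref{thm:Moriprogramintro}, which supplies the flipping and divisorial walls and constructs each projective model; once that theorem is in hand, the passage to the MDS conclusion is formal. The one subtlety worth flagging is that Hu--Keel is usually stated for $\QQ$-factorial varieties with the full Néron--Severi space playing the role of the Picard lattice, so one should note that $\Mzv/S_{7}$ is $\QQ$-factorial with $\mathrm{N}^{1}(\Mzv/S_{7})_{\QQ} = \mathrm{N}^{1}(\Mzv)_{\QQ}^{S_{7}}$ of dimension $2$, which legitimizes the identification used throughout.
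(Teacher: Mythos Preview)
Your proposal is correct and follows the same route as the paper: the corollary is deduced immediately from Theorem \ref{thm:Moriprogramintro} together with \cite[Lemma 6.1]{Moo13a}, which identifies Mori's program for $\Mzv/S_{7}$ with the symmetric Mori program for $\Mzv$. The paper treats this as a one-line consequence and does not spell out the Hu--Keel verification you provide, but your elaboration is sound and simply makes explicit what the paper leaves implicit.
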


In general, we expect that the symmetric cone $\mathrm{Eff}(\Mzn) \cap \mathrm{N}^{1}(\Mzn)_{\QQ}^{S_{n}}$ is in the Mori dream region, so during running Mori's program with symmetric divisors, there is no fundamental technical obstruction. In particular, we expect that the answer for the following question is affirmative.

\begin{question}
For each $2 \le k \le \lfloor \frac{n}{2}\rfloor$, is there a rational contraction $\Mzn \dashrightarrow \mathrm{M}(k)$ which contracts all boundary divisors except $B_{k}$?
\end{question}

For $n \ge 7$, the only previously known such model was $\mathrm{M}(2)$, which is $(\PP^{1})^{n}\git \SL_{2}$ (\cite{KM11}). The space $\Mzv^{1}$ provides $\mathrm{M}(3)$ when $n = 7$.

\subsection{The second main result - Modular interpretation}

So far, all modular birational models of $\overline{\cM}_{g,n}$ have been constructed in two ways. One way is taking GIT quotients of certain parameter spaces, and another way is taking an open proper substack of the stack of all pointed curves. Those two approaches are completely different, but the outcome is essentially moduli spaces of (pointed) curves with worse singularities. For instance, the moduli space $\overline{\cM}_{g}^{ps}$ of pseudostable curves (\cite{Sch91}) can be obtained by allowing cuspidal singularities instead of elliptic tails. By replacing a certain type of subcurves by a cetain type of Gorenstein singularities, we may obtain many other birational models. See \cite{AFS10} for a systematic approach for curves without marked points. Hassett's moduli spaces of weighted stable curves $\overline{\cM}_{g,A}$ are also moduli spaces of semi log canonical pairs (See Section \ref{ssec:Hassettspace}.), so they are moduli spaces of pointed curves with certain types of singularities of pairs as well.

Recently, in \cite{Smy13}, Smyth gave a partial classification of possible modular birational models of $\overline{\cM}_{g,n}$, which are moduli spaces of curves with certain singularity types. When $g = 0$, his result gives a complete classification. One interesting fact is that all of his birational models are contractions of $\Mzn$, because there is no positive dimensional moduli of singularities of arithmetic genus zero. Therefore if one wants to impose a moduli theoretic interpretation of a flip of $\Mzn$, then it must \emph{not} be a moduli space of pointed curves. 
 
In the second half of this paper, we give a moduli theoretic meaning to the first flip $\Mzv^{3}$. The main observation is that both $\Mzv$ and $V_{A}^{3}$ are constructed as GIT quotients (Remark \ref{rem:GITquotient}) and there is a commutative diagram in Figure \ref{fig:commdiagram}.
\begin{figure}[!ht]
\[
	\xymatrix{\overline{\mathrm{M}}_{0,7}(\PP^{3}, 3) \ar[rd] 
	\ar[dd]^{\git\SL_{4}}
	\ar@{<-->}[rr]
	&& \square \ar[ld] \ar[dd]^{\git\SL_{4}}\\
	& I \subset \overline{\mathrm{M}}_{0,0}(\PP^{3}, 3) \times
	(\PP^{3})^{7}\ar[dd]^(0.32){\git\SL_{4}}\\
	\Mzv \ar[rd]\ar@{<-->}[rr] && \Mzv^{3}\ar[ld]\\
	& V_{A}^{3}}
\]
\caption{$\SL_{4}$-quotients of incidence varieties}
\label{fig:commdiagram}
\end{figure}
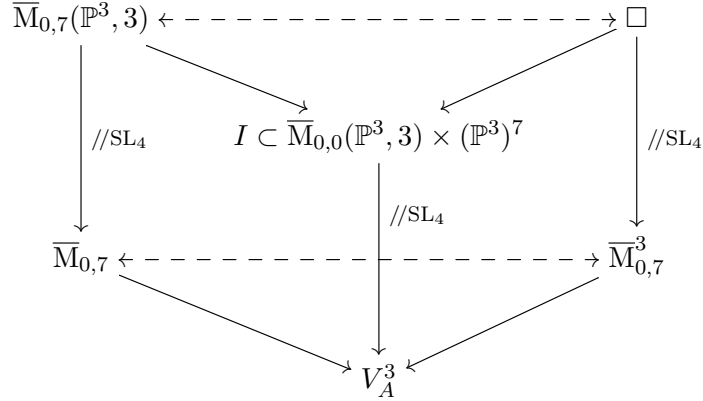

The variety $I$ is the incidence variety in $\Mzzptt \times (\PP^{3})^{7}$, where $\Mzzptt$ is the moduli space of stable maps (\cite{KM94}). All vertical maps are $\SL_{4}$-GIT quotients with certain linearizations (Thus they are not regular maps.). So we may guess that there is a parameter space $X$ in the node $\square$ such that
\begin{enumerate}
\item There is a functorial morphism $X \to \Mzzptt \times (\PP^{3})^{7}$;
\item There is an `incidence variety' $J \subset X$ with $\SL_{4}$-action;
\item With an appropriate linearization, $J\git \SL_{4} \cong \Mzv^{3}$.
\end{enumerate}

Let $\cUznprd$ be the moduli stack of unramified stable maps, introduced in \cite{KKO14}. And let $\Uznprd$ be the coarse moduli space. By analyzing the difference between $\Uzzptt$ and $\Mzzptt$ carefully, we will show that $\Uzzptt \times (\PP^{3})^{7}$ has the role of $X$. 

Unfortunately, there are just few known geometric properties of $\Uzzptt$. For instance, it is not irreducible, and the connectivity and projectivity of the coarse moduli space are unknown. Therefore the standard GIT approach is unavailable. Instead of that, we introduce a `stable locus' $J^{s}$ of $J$ and show that $J^{s}/\SL_{4}$ is a projective variety which is isomorphic to $\Mzv^{3}$. We will denote $J^{s}/\SL_{4}$ by a `formal GIT quotient' $J\git \SL_{4}$ because if we know the projectivity of $\Uzzptt$, then $J^{s}/\SL_{4}$ is indeed isomorphic to $J\git\SL_{4}$ with a standard choice of linearization.

\begin{theorem}\label{thm:modulardescriptionintro}(Theorem \ref{thm:modulardescription})
The formal GIT quotient $J\git \SL_{4}$ is isomorphic to $\Mzv^{3}$.
\end{theorem}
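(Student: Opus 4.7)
The plan is to exhibit $J\git\SL_{4}$ as the unique small $\QQ$-factorial modification of $\Mzv$ over $V_{A}^{3}$ flipping $B_{2}^{3}$, and then to invoke uniqueness of flips to conclude $J\git\SL_{4} \cong \Mzv^{3}$. Conceptually, the passage from $\Mzzptt$ to $\Uzzptt$ blows up the locus of stable maps with tangent collisions, which is precisely the locus that through the incidence quotient cuts out $B_{2}^{3}$ in $\Mzv$. Hence the new formal GIT quotient should be related to $\Mzv$ by a birational surgery concentrated on $B_{2}^{3}$. The input driving the argument is the square of Figure 1: both $\Mzv$ and $V_{A}^{3}$ already arise as $\SL_{4}$-quotients of the incidence variety $I \subset \Mzzptt \times (\PP^{3})^{7}$ under two different linearizations, so replacing the first factor by $\Uzzptt$ and keeping the linearization that induces the map to $V_{A}^{3}$ should furnish the flip.

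The first technical step is to set up the stable locus $J^{s} \subset J \subset \Uzzptt \times (\PP^{3})^{7}$ and to show the geometric quotient $J^{s}/\SL_{4}$ exists as an algebraic space. I would adapt the numerical stability inequalities used in the GIT construction of $\Mzv$, read on $\Uzzptt$ via the birational morphism $\Uzzptt \to \Mzzptt$ and the local presentation of unramified stable maps from \cite{KKO14}. Finite $\SL_{4}$-stabilizers hold on $J^{s}$ because a general stable configuration of seven marked points on a rational normal cubic has finite projective automorphism group. The next step is to construct a proper morphism $\pi \colon J^{s}/\SL_{4} \to V_{A}^{3}$ by composing the projection $J^{s} \to (\PP^{3})^{7}$ with the classifying map of the Veronese quotient and descending by $\SL_{4}$. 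Properness of $\pi$ is the key point that circumvents the unknown projectivity of $\Uzzptt$: combined with projectivity of $V_{A}^{3}$, it forces $J^{s}/\SL_{4}$ to be projective. I would prove $\pi$ is proper via the valuative criterion, reducing to the properness of $\Uzzptt \to \Mzzptt$ and of the already-established contraction $\Mzv \to V_{A}^{3}$.

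Once $\pi$ is established as a proper morphism to $V_{A}^{3}$, three verifications suffice to identify $J^{s}/\SL_{4}$ with the flip $\Mzv^{3}$: (i) over $V_{A}^{3} \setminus \pi_{\Mzv}(B_{2}^{3})$ the map $\pi$ is an isomorphism, because $\Uzzptt \to \Mzzptt$ is an isomorphism over the open substack of unramified stable maps; comparison with $\Mzv \to V_{A}^{3}$ then produces a canonical birational map $\Mzv \dashrightarrow J^{s}/\SL_{4}$ that is an isomorphism in codimension one; (ii) $\pi$ is small, which I would check by a direct dimension count on the exceptional fibers of $\Uzzptt \to \Mzzptt$ intersected with $J$; and (iii) the $\SL_{4}$-linearization on $J^{s}$ descends to a class in the interval $(\psi - 3K_{\Mzv},\, \psi - K_{\Mzv})$ on $J^{s}/\SL_{4}$ that is $\pi$-ample, which by Theorem \ref{thm:Moriprogramintro}(4) characterizes $\Mzv^{3}$ uniquely among small modifications of $\Mzv$ over $V_{A}^{3}$.

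The main obstacle will be verification (iii). It requires translating a GIT linearization on the incidence subvariety of $\Uzzptt \times (\PP^{3})^{7}$ into an explicit combination of boundary and $\psi$-classes on $J^{s}/\SL_{4}$, and matching this class with the interval appearing in Theorem \ref{thm:Moriprogramintro}. The computation is delicate because the Picard group of $\Uzzptt$ differs from that of $\Mzzptt$ by the exceptional divisor of the resolution, and the contribution of that divisor to the descended class has to be accounted for carefully, most likely by leveraging the divisor-class formulas for Kontsevich spaces from \cite{GJM13} together with the comparison between the exceptional locus of $\Uzzptt \to \Mzzptt$ and the flipped locus of $B_{2}^{3}$.
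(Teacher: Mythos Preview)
Your overall strategy—identify $J\git\SL_{4}$ with the $D$-flip of $\Mzv$ over $V_{A}^{3}$ via uniqueness of flips—is conceptually sound, but the paper takes a completely different and more direct route, and your verification (iii) is a genuine gap rather than merely ``delicate.''

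The paper never computes what divisor class the linearization $L'$ descends to. Instead it builds an explicit morphism $\overline{Q}:J\git\SL_{4}\to\Mzv^{3}$ by roofing through the \emph{seven-pointed} unramified space $\Uzvptt^{s}$. The key step is: since $\pi_{3}:\widetilde{\mathrm{M}}_{0,7}^{3}\to\Mzv$ is the blow-up of $B_{2}^{3}$, the universal property of blow-up lifts the composite $g:\Uzvptt^{s}\to\Mzv$ to $\tilde{g}:\Uzvptt^{s}\to\widetilde{\mathrm{M}}_{0,7}^{3}$, once one checks $g^{-1}(B_{2}^{3})$ is Cartier (it is the pullback of a divisor in the smooth space $\Uzzptt^{nd}$). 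Composing with $\pi_{3}'$ gives $\bar{g}:\Uzvptt^{s}\to\Mzv^{3}$; then one shows the forgetful map $\Uzvptt^{s}\to J^{s}$ is an algebraic fiber space whose contracted curves are also contracted by $\bar{g}$, so rigidity produces $Q:J^{s}\to\Mzv^{3}$, which descends to $\overline{Q}$. Finally $\overline{Q}$ is quasi-finite birational onto a smooth target, hence an isomorphism by Zariski's main theorem. No divisor-class identification is needed.

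Your step (iii) is where the gap lies. You propose to identify the descended linearization with a class in $(\psi-3K_{\Mzv},\psi-K_{\Mzv})$, but the linearization $L'$ is pulled back from $\overline{\mathrm{M}}_{0,0}(\PP(T\PP^{3}),(3,10))\times(\PP^{3})^{7}$ via the tangent map, not from Kontsevich space directly, and the formulas in \cite{GJM13} do not apply to this setting. Moreover, your heuristic that ``$\Uzzptt\to\Mzzptt$ blows up the locus of tangent collisions'' is inaccurate: the stabilization map contracts ghost components rather than blowing up a locus, $\Uzzptt$ is not known to be irreducible, and there is no single ``exceptional divisor'' whose contribution you could isolate. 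Without a concrete mechanism to compute the descended class, the $\pi$-ampleness check cannot be carried out, and uniqueness of flips cannot be invoked. The paper's blow-up/rigidity argument sidesteps all of this.
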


By using this result, we are able to describe a modular description of $\Mzv^{3}$. As we mentioned before, it is not a space of pointed curves anymore. It is a parameter space of data $(C, (x_{1}, x_{2}, \cdots, x_{7}), C')$ where $(C, x_{1}, x_{2}, \cdots, x_{7})$ is an element of $V_{A}^{3}$, which is an arithmetic genus zero pointed curve with certain stability condition (\cite[Theorem 5.1]{GJM13}), and $C'$ is a \textbf{ghost curve}, which is a curve on a non-rigid compactified tangent space $\PP(T_{x}C \oplus \CC)$ for a non-Gorenstein singularity $x \in C$. For the precise definition, see Sections \ref{sec:KKOcompactification} and \ref{sec:modular}. 

The same type of flip appears for Mori's program for all $n \ge 7$ (Remark \ref{rem:largen}). Thus we believe that to run Mori's program for $\Mzn$, it is inevitable to understand the geometry of $\cUznpdd$. We will study geometric properties of this relatively new moduli space in forthcoming papers. 

\subsection{Structure of the paper}

In Section \ref{sec:divcurve} we recall the definitions of several divisor classes and curve classes on $\Mzn$ with their numerical properties. In Section \ref{sec:stablebaselocus}, we compute the stable base locus for every symmetric effective divisor on $\Mzv$. In Section \ref{sec:Moriprogram} we prove Theorem \ref{thm:Moriprogramintro}. Section \ref{sec:KKOcompactification} reviews the moduli space of unramified stable maps and its geometric properties. Finally in Section \ref{sec:modular}, we show Theorem \ref{thm:modulardescriptionintro}. 

We will work over the complex number $\CC$. 

\textbf{Acknowledgement.} The author thanks Q. Chen, A. Gibney, D. Jensen, Y.-H. Kiem, B. Kim, I. Morrison, Y.-G. Oh, D. Swinarski, and J. Tevelev for helpful conversation. Parts of this paper were written while the author was visiting Vietnam Institute for Advanced Study in Mathematics (VIASM). The author wishes to thank VIASM for the invitation and hospitality.

\section{Divisors and curves on $\Mzn$}\label{sec:divcurve}

In this section, we review general facts about divisors and curves on $\Mzn$. All materials in this section is well-known but we leave explicit statements we will use in this paper for reader's convenience. 

\subsection{Divisors on $\Mzn$}

The moduli space $\Mzn$ inherits a natural $S_{n}$ action permuting marked points. A divisor $D$ on $\Mzn$ is called \textbf{symmetric} if it is invariant under the $S_{n}$ action. The Neron-Severi vector space $\mathrm{N}^{1}(\Mzn)_{\QQ}$ has dimension $2^{n-1} - {n \choose 2} - 1$ so the space of divisors on $\Mzn$ is quite huge. But the $S_{n}$-invariant part $\mathrm{N}^{1}(\Mzn)_{\QQ}^{S_{n}} \cong \mathrm{N}^{1}(\Mzn/S_{n})_{\QQ}$ of $\mathrm{N}^{1}(\Mzn)_{\QQ}$ is $\lfloor n/2\rfloor - 1$ dimensional (\cite[Theorem 1.3]{KM96}) so at least for small $n$, computations on the space are doable. 

The following is a list of tautological divisors on $\Mzn$. 

\begin{definition}\label{def:divisorclasses}
\begin{enumerate}
\item For $I \subset [n] = \{1,2,\cdots, n\}$ with $2 \le |I| \le n-2$, let $B_{I}$ be the closure of the locus of pointed curves $(C, x_{1}, \cdots, x_{n})$ with two irreducible components $C_{1}$ and $C_{2}$ such that $C_{1}$ (resp. $C_{2}$) contains $x_{i}$ for $i \in I$ (resp. $i \in I^{c}$). $B_{I}$ is called a boundary divisor. By the definition, $B_{I}= B_{I^{c}}$. For $2 \le i \le n-2$, let $B_{i} = \cup_{|I| = i}B_{I}$. Then $B_{i}$ is a symmetric divisor and $B_{i} = B_{n-i}$. Finally, let $B = \sum_{i=2}^{\lfloor n/2 \rfloor}B_{i}$. 
\item Fix $1 \le i \le n$. Let $\mathbb{L}_{i}$ be the line bundle on $\Mzn$ such that over $(C, x_{1}, \cdots, x_{n}) \in \Mzn$, the fiber is $\Omega_{C, x_{i}}$, the cotangent space of $C$ at $x_{i}$. Let $\psi_{i} = c_{1}(\mathbb{L}_{i})$, the $i$-th psi class. If we denote $\psi = \sum_{i=1}^{n}\psi_{i}$, then $\psi$ is a symmetric divisor. 
\item Let $K_{\Mzn}$ be the canonical divisor of $\Mzn$. Obviously it is symmetric.
\end{enumerate}
\end{definition}

The symmetric effective cone $\mathrm{Eff}(\Mzn)^{S_{n}} \cong \mathrm{Eff}(\Mzn/S_{n})$, which is $\mathrm{Eff}(\Mzn) \cap \mathrm{N}^{1}(\Mzn)_{\QQ}^{S_{n}}$, is generated by symmetric boundary divisors (\cite[Theorem 1.3]{KM96}). Therefore we can write $K_{\Mzn}$ and $\psi$ as nonnegative linear combinations of boundary divisors. 

\begin{lemma}\cite[Proposition 2]{Pan97}, \cite[Lemma 2.9]{Moo13}
On $\mathrm{N}^{1}(\Mzn)_{\QQ}$, the following relations hold.
\begin{enumerate}
	\item 
	$\displaystyle K_{\Mzn} = \sum_{i=2}^{\lfloor n/2\rfloor}
	\left(\frac{i(n-i)}{n-1} - 2\right)B_{i}$.
	\item $\displaystyle \psi = K_{\Mzn} + 2B$.
\end{enumerate}
\end{lemma}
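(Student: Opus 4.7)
The plan is to prove (2) first as a direct specialization of the standard canonical class formula for $\overline{\mathrm{M}}_{g,n}$, and then bootstrap (1) from (2) via an auxiliary symmetric expansion of $\psi$ in terms of boundary divisors. For (2), apply the classical identity
\[
K_{\overline{\mathrm{M}}_{g,n}} \;=\; 13\lambda + \psi - 2\delta
\]
with $g=0$. The Hodge class $\lambda$ vanishes, and the total boundary $\delta$ agrees with $B=\sum_{i=2}^{\lfloor n/2\rfloor}B_i$ because every boundary divisor in genus zero is separating and carries no fractional automorphism correction. Rearranging yields $\psi = K_{\Mzn}+2B$, which is (2).

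For (1) I would then establish the intermediate formula
\[
\psi \;=\; \sum_{i=2}^{\lfloor n/2\rfloor}\frac{i(n-i)}{n-1}\,B_i
\]
in $\mathrm{N}^{1}(\Mzn)_{\QQ}$. The slickest approach is to test both sides against the spanning family of F-curves $F(I_1,I_2,I_3,I_4)$ indexed by four-part partitions of $[n]$, using the standard intersection rules: $F\cdot\psi_m = 1$ when $\{m\}$ is one of the four blocks and zero otherwise, while $F\cdot B_I$ is determined by how $I$ meets the four parts. Since F-curves span $\mathrm{N}_{1}(\Mzn)_{\QQ}$, matching intersections on every F-curve implies the identity. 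Alternatively, one starts from the Keel-type pullback relation
\[
\psi_i \;=\; \sum_{\substack{S\ni i\\ j,k\notin S}} B_S
\]
(valid for any fixed distinct $j,k\ne i$, obtained by pulling back along the forgetful map $\Mzn\to\overline{\mathrm{M}}_{0,\{i,j,k\}}$ and using the vanishing of $\psi_i$ on a point), averages over the unordered pairs $\{j,k\}\subset[n]\setminus\{i\}$ to get a symmetric-in-choice expression, sums over $i$, and regroups the terms by $|S|$ using $B_S=B_{S^c}$. A short combinatorial calculation pairs the $|S|=i$ and $|S|=n-i$ contributions to produce precisely the coefficient $i(n-i)/(n-1)$. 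Subtracting $2B$ and invoking (2) then delivers
\[
K_{\Mzn} \;=\; \psi - 2B \;=\; \sum_{i=2}^{\lfloor n/2\rfloor}\left(\frac{i(n-i)}{n-1}-2\right)B_i,
\]
which is exactly (1).

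The main obstacle is the combinatorial bookkeeping in the symmetrization step: because $B_I=B_{I^c}$, the symmetric class $B_i$ is effectively a sum over unordered two-block partitions $\{I,I^c\}$ with $|I|=i$, and for even $n$ the middle class $B_{n/2}$ requires a $\tfrac{1}{2}$-correction lest it be counted twice. For $n=7$, the case used in this paper, this subtlety is vacuous, and the F-curve verification reduces the whole argument to a routine intersection calculation.
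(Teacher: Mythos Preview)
The paper does not prove this lemma at all: it is stated with citations to \cite{Pan97} and \cite{Moo13} and left without argument, serving as background input for later computations. Your proposal, by contrast, supplies an actual proof, and the strategy is sound. The derivation of (2) from the Mumford formula $K_{\overline{\mathrm{M}}_{g,n}} = 13\lambda + \psi - 2\delta$ with $\lambda = 0$ in genus zero is standard and correct. For (1), your averaging of the Keel pullback relation $\psi_i = \sum_{i\in S,\ j,k\notin S} B_S$ over the $\binom{n-1}{2}$ choices of $\{j,k\}$ and summing over $i$ does indeed produce the coefficient $i(n-i)/(n-1)$: the combined contribution of the pair $\{S,S^c\}$ with $|S|=s$ is $s\binom{n-s}{2} + (n-s)\binom{s}{2} = \tfrac{1}{2}s(n-s)(n-2)$, and dividing by $\binom{n-1}{2}$ gives $s(n-s)/(n-1)$. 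Subtracting $2B$ then yields (1). Your caution about the middle term $B_{n/2}$ for even $n$ is well placed but, as you note, irrelevant for $n=7$.
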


\subsection{Curves on $\Mzn$}

Let $I_{1} \sqcup I_{2} \sqcup I_{3} \sqcup I_{4} = [n]$ be a partition. Let $F_{I_{1}, I_{2}, I_{3}, I_{4}}$ be the F-curve class corresponding to the partition (\cite[Section 4]{KM96}).

\begin{lemma}\cite{KM96}
Let $F = F_{I_{1}, I_{2}, I_{3}, I_{4}}$ be an F-curve and let $B_{J}$ be a boundary divisor. 
\begin{enumerate}
\item $F \cdot B_{J} = \begin{cases}1,&J = I_{i} \cup I_{j} \mbox{ for some } i \ne j,\\
-1, &J = I_{i} \mbox{ for some } i,\\
0, &\mbox{ otherwise.}\end{cases}$
\item $F \cdot \psi_{i} = \begin{cases}1, & I_{j} = \{i\} \mbox{ for some } j,\\
0, & \mbox{ otherwise.}\end{cases}$
\end{enumerate}
\end{lemma}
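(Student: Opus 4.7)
The plan is to realize $F = F_{I_{1},I_{2},I_{3},I_{4}}$ explicitly as the image of a one-parameter family and then read the intersection numbers off directly. Concretely, for each $i \in \{1,2,3,4\}$ fix a pointed rational tail $(T_{i},\{x_{j}\}_{j \in I_{i}},p_{i})$ with one attaching point $p_{i}$; letting a 4-pointed rational curve $(C_{0},p_{1},p_{2},p_{3},p_{4})$ vary over $\overline{\mathrm{M}}_{0,4} \cong \PP^{1}$ and gluing the $T_{i}$ at the $p_{i}$ produces a morphism $\varphi \colon \PP^{1} \to \Mzn$ whose class is $F$. The moduli of the four tails are frozen and only the cross-ratio of the attaching points varies, which makes each intersection transparent.

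For part (2), the $k$-th cotangent line on the family behaves as follows. If $k$ lies in some $I_{j}$ with $|I_{j}|\ge 2$, then $x_{k}$ sits on the fixed tail $T_{j}$, so $\varphi^{*}\mathbb{L}_{k}$ is the constant line bundle on $\PP^{1}$ and $F \cdot \psi_{k} = 0$. If instead $I_{j} = \{k\}$ is a singleton, then $x_{k}$ is identified with the attaching point $p_{j}$ on the varying curve $C_{0}$, so $\varphi^{*}\mathbb{L}_{k}$ is the pullback of the $j$-th cotangent line on $\overline{\mathrm{M}}_{0,4}$. Since every $\psi$-class on $\overline{\mathrm{M}}_{0,4} \cong \PP^{1}$ has degree one, this gives $F \cdot \psi_{k} = 1$.

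For part (1), the analysis splits according to whether the family generically factors through $B_{J}$. If it does not, then $F$ meets $B_{J}$ exactly where $C_{0}$ degenerates: the three boundary points of $\overline{\mathrm{M}}_{0,4}$ correspond to the three pairings of $\{1,2,3,4\}$, and colliding $p_{i}$ with $p_{j}$ produces an extra node separating $I_{i}\cup I_{j}$ from $I_{k}\cup I_{l}$. Each such meeting is transverse, so $F \cdot B_{I_{i}\cup I_{j}} = 1$, while $F \cdot B_{J} = 0$ for any $J$ not of this form and not equal to some $I_{i}$. The remaining case $J = I_{i}$, where every fiber of the family has a node separating $I_{i}$ from $I_{i}^{c}$, is the main step: here $F \subset B_{I_{i}}$, so $F \cdot B_{I_{i}}$ must be computed as the degree of the normal bundle. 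Under the gluing isomorphism $B_{I_{i}} \cong \overline{\mathrm{M}}_{0,I_{i}\cup\{\star\}} \times \overline{\mathrm{M}}_{0,I_{i}^{c}\cup\{\star'\}}$ and the standard identification $\cO(B_{I_{i}})|_{B_{I_{i}}} = -\psi_{\star} - \psi_{\star'}$, the F-curve factors as $\{\mathrm{pt}\} \times F'$ with $F' = F_{I_{j},I_{k},I_{l},\{\star'\}}$ on the second factor. Hence $\psi_{\star} \cdot F = 0$, and by applying part (2) to $F'$ at its singleton marking $\star'$ one gets $\psi_{\star'} \cdot F = 1$, yielding $F \cdot B_{I_{i}} = -1$. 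The only real obstacle throughout is pinning down the normal bundle formula (and its sign) correctly; once that is in place the case analysis above is routine.
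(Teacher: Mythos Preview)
The paper does not give its own proof of this lemma; it simply cites \cite{KM96}. Your argument is correct and is essentially the standard one: realize $F$ as the one-parameter family over $\overline{\mathrm{M}}_{0,4}\cong\PP^{1}$ with frozen tails, read off the transverse intersections with $B_{I_i\cup I_j}$ at the three boundary points, and handle $J=I_i$ via the self-intersection formula $\cO(B_{I_i})|_{B_{I_i}}=-\psi_\star-\psi_{\star'}$ (the same formula the paper invokes later as \cite[Lemma~4.5]{KM96}). One small point worth making explicit: the case $J=I_i$ only arises when $|I_i|\ge 2$, since otherwise $B_J$ is not a boundary divisor, so your normal-bundle computation via $\{\mathrm{pt}\}\times F'$ is always available.
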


If we consider symmetric divisors only, then the intersection number does not depend on a specific partition but depends on the size of the partition. A curve class $F_{a_{1}, a_{2}, a_{3}, a_{4}}$ is one of any F-curve classes $F_{I_{1}, I_{2}, I_{3}, I_{4}}$ with $a_{i} = |I_{i}|$. 

To compute the stable base locus in Section \ref{sec:stablebaselocus}, we need to use other curve classes $C_{j}$ (see \cite[Lemma 4.8]{KM96}). Fix a $j$-pointed $\PP^{1}$ and let $x$ be an additional moving point on $\PP^{1}$. By gluing a fixed $(n-j+1)$-pointed $\PP^{1}$ whose last marked point is $y$ to the $(j+1)$-pointed $\PP^{1}$ along $x$ and $y$ and stabilizing it, we obtain an one parameter family of $n$-pointed stable curves over $\PP^{1}$, i.e., a curve $C_{j} \cong \PP^{1}$ on $\Mzn$.

\begin{lemma}\cite[Lemma 4.8]{KM96}
\[
	C_{j}\cdot B_{i} = \begin{cases}
	j, & i = j-1,\\ -(j-2), & i = j,\\ 0, & \mbox{otherwise.}
	\end{cases}
\]
\end{lemma}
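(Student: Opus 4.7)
The plan is to realize $C_j$ as the base of an explicit one-parameter family of $n$-pointed stable curves and then read off intersections stratum by stratum. The total space of this family is the union of two ruled surfaces over the base $\PP^{1} = C_j$, glued along a section. The first surface is $\PP^{1}_{t} \times \PP^{1}_{s}$ (fiber coordinate $t$, base coordinate $s$) blown up at the $j$ collision points $(p_{k}, p_{k})$, where the diagonal $\Delta = \{t = s\}$, which tracks the moving section $x$, meets the constant sections $\{p_{k}\} \times \PP^{1}_{s}$. The second surface is a trivial $\PP^{1}$-bundle over $\PP^{1}_{s}$ carrying the $n-j$ constant sections $q_{\ell}$ together with the gluing section $y$. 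The two surfaces are identified along the proper transform $\widetilde{\Delta}$ and $\{y\} \times \PP^{1}_{s}$.

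From this picture the cases $i \notin \{j-1, j\}$ and $i = j-1$ are immediate. A generic fiber is a two-component stable curve representing a point of $B_{j}$; the fiber over $s = p_{k}$ is the three-component chain produced by the blow-up, whose new middle component carries $p_{k}$ together with both nodes. These special fibers lie in $B_{j-1} \cap B_{j}$ and in no other boundary, so $C_{j} \cdot B_{i} = 0$ for $i \notin \{j-1, j\}$, and the $j$ collisions contribute transverse intersections with $B_{j-1}$, giving $C_{j} \cdot B_{j-1} = j$.

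The essential case is $i = j$, where $C_{j} \subset B_{j}$ forces a normal bundle computation. Using the standard formula $B_{I}|_{B_{I}} = -\psi_{1} - \psi_{2}$, with $\psi_{1}, \psi_{2}$ the cotangent classes at the two branches of the separating node, one has
\[
C_{j} \cdot B_{j} \;=\; -(\psi_{x} + \psi_{y}) \cdot C_{j},
\]
where $x$ and $y$ label the two branches. The section $y$ is constant in a trivial $\PP^{1}$-bundle, so $\psi_{y} \cdot C_{j} = 0$. The section $x$ pulls back to $\widetilde{\Delta}$, and combining the identity $\psi = -\sigma \cdot \sigma$ for a section of a family of curves with $\Delta^{2} = 2$ on $\PP^{1} \times \PP^{1}$ and the fact that all $j$ blow-ups lie on $\Delta$ gives $\psi_{x} \cdot C_{j} = -\widetilde{\Delta}^{2} = j - 2$. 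Therefore $C_{j} \cdot B_{j} = -(j-2)$, as claimed.

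The main obstacle will be bookkeeping: checking that a single blow-up at each $(p_{k}, p_{k})$ is exactly what stabilization demands (so that the $B_{j-1}$ intersections really are transverse), that the three-component limits land in $B_{j-1} \cap B_{j}$ and in no finer stratum, and that the signs in the normal bundle identity and in $\psi = -\sigma \cdot \sigma$ combine correctly. A little additional care is needed at the edge values $j = 2$ or $j = n-1$, where one of the two surfaces is not stable and the construction must be interpreted accordingly.
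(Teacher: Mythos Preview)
The paper does not prove this lemma at all; it is quoted verbatim from \cite[Lemma 4.8]{KM96} and used as a black box. So there is no ``paper's own proof'' to compare against.

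That said, your argument is correct and is essentially the standard proof one would expect to find in the cited reference. The explicit family you build is exactly the one implicit in the paper's definition of $C_{j}$, the identification of the $j$ collision fibers with transverse intersections of $B_{j-1}$ is right, and the computation $C_{j}\cdot B_{j} = -(\psi_{x}+\psi_{y})\cdot C_{j} = -(j-2)$ via $\widetilde{\Delta}^{2} = 2-j$ and $\psi_{y}\cdot C_{j}=0$ is clean. One small point worth making explicit: since $B_{j}$ is a \emph{sum} of irreducible boundary divisors $B_{I}$ with $|I|=j$, you should note that $C_{j}$ lies in exactly one such $B_{I}$ and meets none of the others (your description of the three-component limits confirms this), so that $C_{j}\cdot B_{j}$ really does reduce to the single normal-bundle term you compute. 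The edge-case caveat about $j=2$ is apt but not needed here, since the lemma is only meaningful for $3\le j\le n-2$.
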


\begin{remark}\label{rem:curveA}
We are able to generalize the idea of construction. For example, by 1) gluing two 3-pointed $\PP^{1}$ to $(n-2)$-pointed $\PP^{1}$, 2) varying one of two attached points, and 3) stabilizing it, we get an one parameter family of $n$-pointed stable curves over $\PP^{1}$. Let $A \subset \Mzv$ be such a curve class.
\end{remark}

\subsection{Numerical results on $\Mzv$}

For a convenience of readers, we leave a special case of $\Mzv$ below. All results are combinations of the Lemmas in previous sections. 

\begin{corollary}\label{cor:divisorclasscomputation}
The symmetric Neron-Severi space $\mathrm{N}^{1}(\Mzv)_{\QQ}^{S_{7}}$ has dimension two. The symmetric effective cone $\mathrm{Eff}(\Mzv)^{S_{7}}$ is generated by $B_{2}$ and $B_{3}$. Moreover, 
\begin{enumerate}
	\item $K_{\Mzv} = -\frac{1}{3}B_{2}$,
	\item $\psi = \frac{5}{3}B_{2}+2B_{3}$,
	\item $B_{2} = -3K_{\Mzv}$,
	\item $B_{3} = \frac{5}{2}K_{\Mzv}+\frac{1}{2}\psi$.
\end{enumerate}
\end{corollary}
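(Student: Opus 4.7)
The plan is to derive everything by specializing the general formulas from the two preceding lemmas to $n=7$ and then inverting the resulting linear system; there is no genuine obstacle here, only bookkeeping.

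First I would establish the dimension and generation claims. By the cited result \cite[Theorem 1.3]{KM96}, $\dim \mathrm{N}^{1}(\Mzn)_{\QQ}^{S_{n}} = \lfloor n/2 \rfloor - 1$, which gives $2$ for $n=7$. The same theorem says the symmetric effective cone is generated by the symmetric boundary divisors $B_{i}$ with $2 \le i \le \lfloor n/2 \rfloor$, which for $n=7$ are exactly $B_{2}$ and $B_{3}$. Since these two classes are independent (for instance one sees independence by pairing against the F-curves $F_{1,1,1,4}$ and $F_{1,1,2,3}$ using the F-curve intersection formula of the previous lemma), they form a basis and simultaneously generate the effective cone, so the first two assertions are established.

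Next I would prove item (1) by substituting $n=7$ directly into the canonical divisor formula
\[
K_{\Mzn} = \sum_{i=2}^{\lfloor n/2\rfloor}\left(\frac{i(n-i)}{n-1} - 2\right) B_{i}.
\]
The $B_{2}$ coefficient is $\frac{2\cdot 5}{6} - 2 = -\frac{1}{3}$ and the $B_{3}$ coefficient is $\frac{3\cdot 4}{6} - 2 = 0$, giving $K_{\Mzv} = -\tfrac{1}{3}B_{2}$. For item (2), I would use the companion identity $\psi = K_{\Mzn} + 2B$ with $B = B_{2}+B_{3}$, so
\[
\psi = -\tfrac{1}{3}B_{2} + 2(B_{2}+B_{3}) = \tfrac{5}{3}B_{2} + 2B_{3}.
\]

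Finally, items (3) and (4) are purely formal rearrangements: (3) is immediate from (1), and for (4) I would solve (1) and (2) for $B_{3}$, giving $B_{3} = \tfrac{1}{2}\psi - \tfrac{5}{6}B_{2} = \tfrac{1}{2}\psi + \tfrac{5}{2}K_{\Mzv}$. Since every step is a direct computation from quoted identities, the only thing to be careful about is keeping the coefficients straight; there is no substantive difficulty.
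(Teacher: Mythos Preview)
Your argument is correct and is exactly what the paper intends: it states this corollary without proof, noting only that all results are combinations of the preceding lemmas, and your specialization of the $K_{\Mzn}$ and $\psi$ formulas to $n=7$ together with the Keel--McKernan dimension and effective cone statements is precisely that combination carried out explicitly.
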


We can summarize Corollary \ref{cor:divisorclasscomputation} with Figure \ref{fig:effectivecone}.

\begin{figure}[!ht]
\begin{tikzpicture}[scale=0.5]
	\draw[->][line width=1.5pt] (5, 0) -- (10, 0);
	\draw[line width=1.5pt] (0,0) -- (5,0);
	\draw[->][line width=1.5pt] (5,0) -- (5,5);
	\draw[->][line width=1.5pt] (5,0) -- (9.3,2.6);
	\draw[->][line width=1.5pt] (5,0) -- (0, 0);
	\node at (11,0) {$K_{\Mzv}$};
	\node at (5,5.5) {$\psi$};
	\node at (10, 2.6) {$B_{3}$};
	\node at (-0.5, 0) {$B_{2}$};
\end{tikzpicture}
\caption{Neron-Severi space of $\Mzv$}\label{fig:effectivecone}
\end{figure}

\begin{corollary}\label{cor:intersection}
On $\Mzv$, the intersection of symmetric divisors and curve classes are given by
Table \ref{tbl:intersectionnumbers}.
\begin{table}[!ht]
\begin{tabular}{|c|c|c|c|c|}
\hline
& $\psi$ & $K_{\Mzv}$ & $B_{2}$ & $B_{3}$\\ \hline
$F_{1,1,1,4}$ & 3 & -1 & 3 & -1\\ \hline
$F_{1,1,2,3}$ & 2 & 0 & 0 & 1\\ \hline
$F_{1,2,2,2}$ & 1 & 1 & -3 & 3 \\ \hline
$C_{4}$ & 4 & 0 & 0 & 2\\ \hline
$C_{5}$ & 5 & 1 & -3 & 5\\ \hline
$C_{6}$ & 10 & -2 & 6 & 0\\ \hline
$A$ & 3 & 1 & -3 & 4\\ \hline
\end{tabular}
\medskip
\caption{Intersection numbers on $\Mzv$}\label{tbl:intersectionnumbers}
\end{table}
\end{corollary}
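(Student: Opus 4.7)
The proof is a tabulation that I would reduce to computing the $B_{2}$ and $B_{3}$ columns for each curve; the $\psi$ and $K_{\Mzv}$ entries then follow immediately from $K_{\Mzv} = -\tfrac{1}{3}B_{2}$ and $\psi = \tfrac{5}{3}B_{2} + 2B_{3}$ (Corollary \ref{cor:divisorclasscomputation}). This observation cuts the work in half and means the $\psi$ column is merely a consistency check on the $B_{2}, B_{3}$ computation.

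For each F-curve $F_{a_{1},a_{2},a_{3},a_{4}}$, I apply the formula $F\cdot B_{J}$ from the F-curve lemma. Summing over size-$k$ boundary components and using $B_{J}=B_{J^{c}}$, the integer $F\cdot B_{k}$ equals the number of unordered pairs $\{I_{i},I_{j}\}$ of parts whose union has size $k$ or $7-k$ (each $+1$) minus the number of individual parts $I_{i}$ of size $k$ or $7-k$ (each $-1$). For example $F_{1,1,1,4}\cdot B_{2}=3$ (three pairs of singletons) and $F_{1,1,1,4}\cdot B_{3}=-1$ (the complement of the size-$4$ block); the two other F-curves are handled identically. For the $C_{j}$ curves ($j=4,5,6$) I apply the $C_{j}$-formula, combining the $i=j-1$ and $i=j$ contributions under the identification $B_{i}=B_{7-i}$: $C_{4}\cdot B_{3}=4+(-2)=2$ (from $B_{3}$ and $B_{4}=B_{3}$); $C_{5}\cdot B_{3}=5$ and $C_{5}\cdot B_{2}=-3$ (from $B_{4}=B_{3}$ and $B_{5}=B_{2}$); $C_{6}\cdot B_{2}=6$ (from $B_{5}=B_{2}$, with no $B_{6}$ to contribute on $\Mzv$); the remaining entries in these rows vanish.

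Only the curve $A$ of Remark \ref{rem:curveA} requires a direct construction. Fix two 3-pointed $\PP^{1}$'s $T_{1},T_{2}$ (each carrying 2 markings and an attaching point) and the middle $\PP^{1}$ $M$ carrying the remaining markings $r_{1},r_{2},r_{3}$, a fixed attaching point $q_{1}$ (to $T_{1}$) and a moving attaching point $q_{2}$ (to $T_{2}$). I will compute $A\cdot \psi$ and $A\cdot B_{3}$ directly and then read off $A\cdot B_{2}=-3$ and $A\cdot K_{\Mzv}=1$ from the two relations above. To compute $A\cdot \psi$, I resolve the total family to a smooth surface by blowing up the four points of $M\times \PP^{1}$ where the diagonal (carrying the $T_{2}$-attachment) meets either $\{q_{1}\}\times \PP^{1}$ or one of the sections $\{r_{i}\}\times \PP^{1}$; only the $r_{i}$-sections are affected, each becoming a $(-1)$-curve, so $\psi_{i}\cdot A=-\sigma_{i}^{2}$ gives $A\cdot \psi =3$. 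For $A\cdot B_{3}$, I analyze the four degenerate fibers: at each one the bubbled 4-component stable curve has a distinguished edge whose removal produces a $(3,4)$ marked-point split, and the four resulting 3-subsets---namely $\{r_{i}\}\cup(\text{markings of }T_{2})$ for $i=1,2,3$ from $q_{2}\to r_{i}$, and $\{r_{1},r_{2},r_{3}\}$ from $q_{2}\to q_{1}$---are all distinct, so these yield four different components of $B_{3}$ met transversally and $A\cdot B_{3}=4$.

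The only genuine hazard is in the $A$-row: one must set up the smooth total surface correctly so that the formula $\psi_{i}\cdot A=-\sigma_{i}^{2}$ applies, and one must verify that the four $B_{3}$-components produced by the four degenerations are pairwise distinct so that no multiplicities collapse. The reduction to $B_{2},B_{3}$ in the first paragraph also sidesteps the need to evaluate the self-intersections $A\cdot B_{T_{1}}$ and $A\cdot B_{T_{2}}$ that would otherwise appear in a direct computation of $A\cdot B_{2}$. Everything else in the table is a mechanical application of the two lemmas stated in the section.
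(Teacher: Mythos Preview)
Your proof is correct and fills in exactly the computation the paper leaves to the reader (the paper gives no argument beyond the sentence ``All results are combinations of the Lemmas in previous sections'').  Two remarks.

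\emph{A wording slip in the F-curve recipe.}  As stated, your rule ``count unordered pairs $\{I_i,I_j\}$ whose union has size $k$ or $7-k$'' double-counts: a pair and its complementary pair determine the same boundary divisor, so one should count only pairs with union of size $k$ (equivalently, only divisors $B_J$ with $|J|=k$), not both.  For instance your rule, read literally, gives $F_{1,2,2,2}\cdot B_3 = 6$ and $F_{1,1,2,3}\cdot B_3 = 3$ rather than $3$ and $1$.  Your worked examples (``three pairs of singletons'', ``the complement of the size-$4$ block'') show that you are in fact counting each divisor once, so this is purely a phrasing issue; the table entries you obtain are all correct.

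\emph{The $A$-row.}  Your computation of $A\cdot\psi=3$ via the smooth total surface and $A\cdot B_3=4$ via the four degenerate fibers is clean and correct; deducing $A\cdot B_2$ and $A\cdot K_{\Mzv}$ from the relations is indeed the efficient route, and your observation that this sidesteps the self-intersections $A\cdot B_{T_1}$, $A\cdot B_{T_2}$ is well taken.  (Incidentally, the distinctness of the four $B_3$-components is a pleasant check but not strictly needed: even if two of the four collisions landed in the same $B_J$, the transversal intersections would still sum to $4$, since $A$ is not contained in any component of $B_3$.)
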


\section{Stable base locus decomposition}\label{sec:stablebaselocus}

For an effective divisor $D$, the stable base locus $\mathbf{B}(D)$ is defined as
\[
	\mathbf{B}(D) = \bigcap_{m \ge 0}\mathrm{Bs}(mD),
\]
where $\mathrm{Bs}(D)$ is the set-theoretical base locus of $D$. As a first step toward Mori's program, we will compute stable base locus decompositions of $\Mzv$, which is  a first approximation of the chamber decompositions for different birational models. 

\begin{definition}
Let $B_{2}^{i}$ be the union of intersections of $i$ distinct irreducible components of $B_{2}$.
\end{definition}

Since $B$ is a simple normal crossing divisor, $B_{2}^{i}$ is a union of smooth varieties of codimension $i$. Moreover, the singular locus of $B_{2}^{i}$ is exactly $B_{2}^{i+1}$. On $\Mzv$, $B_{2}^{4}$ is an emptyset, $B_{2}^{3}$ is the union of all F-curves of type $F_{1,2,2,2}$. Each irreducible component of $B_{2}^{2}$ is isomorphic to $\Mzf$. Finally, $B_{2}^{1} = B_{2}$.

\begin{proposition}\label{prop:stablebaselocus}
Let $D$ be a symmetric effective divisor on $\Mzv$. Then:
\begin{enumerate}
	\item If $D \in [\psi-K_{\Mzv}, K_{\Mzv}+\frac{1}{3}\psi]$, 
	$D$ is semi-ample. 
	\item If $D \in (K_{\Mzv} + \frac{1}{3}\psi, B_{3}]$, 
	$\mathbf{B}(D) = B_{3}$
	\item If $D \in [\psi-3K_{\Mzv},\psi -K_{\Mzv})$, 
	$\mathbf{B}(D) = B_{2}^{3}$. 
	\item If $D \in [\psi-5K_{\Mzv}, \psi-3K_{\Mzv})$, 
	$\mathbf{B}(D) = B_{2}^{2}$. 
	\item If $D \in [B_{2}, \psi-5K_{\Mzv})$, 
	$\mathbf{B}(D) = B_{2}$. 
\end{enumerate}
\end{proposition}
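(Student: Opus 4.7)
The proposition is a chamber-by-chamber stable-base-locus calculation in the two-dimensional symmetric cone. In each case I sandwich $\mathbf{B}(D)$ between a lower bound from covering families of negative-intersection curves and an upper bound from semi-ample pullbacks. All intersection numbers come from Corollary \ref{cor:intersection}.

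For the lower bounds I pick one curve class per chamber, trivial on the adjacent wall and strictly negative in the interior: $F_{1,1,1,4}$ in chamber (2), vanishing on $K_{\Mzv}+\tfrac{1}{3}\psi$; $F_{1,2,2,2}$ in chamber (3), vanishing on $\psi-K_{\Mzv}$; the curve $A$ of Remark \ref{rem:curveA} in chamber (4), vanishing on $\psi-3K_{\Mzv}$; and $C_5$ in chamber (5), vanishing on $\psi-5K_{\Mzv}$. Using $S_7$-symmetry and the deformation freedom of the underlying constructions (varying the $\Mzf$-factor of the $B_I$ containing $F_{1,1,1,4}$, varying the central $3$-pointed data for $A$, varying the fixed $5$-pointed $\PP^1$ for $C_5$) one checks that these families sweep out $B_3$, $B_2^3$, $B_2^2$, and $B_2$ respectively; for $F_{1,2,2,2}$ this is immediate, since the irreducible components of $B_2^3$ are literally these F-curves. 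The standard moving-curve argument then forces $Z\subseteq\mathbf{B}(D)$ throughout the interior of each chamber.

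For the upper bounds, since the symmetric effective cone is two-dimensional, any $D$ in chamber $k$ is a nonnegative combination $\alpha W_1+\beta W_2$ of its two bounding wall classes, so $\mathbf{B}(D)\subseteq\mathbf{B}(W_1)\cup\mathbf{B}(W_2)$. Two walls are base-point-free pullbacks from known projective models: $K_{\Mzv}+\tfrac{1}{3}\psi$ from the Hassett space $\Mza$ with weight $A=(\tfrac{1}{3},\ldots,\tfrac{1}{3})$ (\cite{Has03}), and $\psi-K_{\Mzv}$ from the Veronese quotient $V_A^3$ with weight $A=(\tfrac{4}{7},\ldots,\tfrac{4}{7})$ (\cite{GJM13}). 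Combined with the tautological bounds $\mathbf{B}(B_i)\subseteq B_i$, this disposes of chambers (1), (2), and the $B_2$-endpoint of chamber (5).

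The main obstacle is the upper bound at the two new internal walls $\psi-3K_{\Mzv}$ and $\psi-5K_{\Mzv}$, where the target models are not classically known projective varieties. The plan is to produce enough sections in these wall classes by $S_7$-symmetrizing products of the base-point-free class $m(\psi-K_{\Mzv})$ with effective boundary corrections, so that for each component $B_I$ of $B_2$ which should be cleared from the base locus one has an effective representative of a large multiple of the wall class that avoids $B_I$. Alternatively, the containments $\mathbf{B}(\psi-3K_{\Mzv})\subseteq B_2^3$ and $\mathbf{B}(\psi-5K_{\Mzv})\subseteq B_2^2$ can be read off a posteriori from the construction of the birational models $\Mzv^3$ and $\Mzv^2$ in Section \ref{sec:Moriprogram}, since each wall class then becomes the pullback of an ample class under a rational contraction $\Mzv\dashrightarrow\Mzv(D)$ whose indeterminacy locus is exactly the claimed stratum.
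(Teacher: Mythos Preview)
Your lower bounds and your treatment of the two ``easy'' walls $K_{\Mzv}+\tfrac{1}{3}\psi$ and $\psi-K_{\Mzv}$ match the paper's argument exactly: the same covering curves $F_{1,1,1,4}$, $F_{1,2,2,2}$, $A$, $C_5$, and the same semi-ample pullbacks from $\Mza$ and $V_A^3$.

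The gap is at the two internal walls $\psi-3K_{\Mzv}$ and $\psi-5K_{\Mzv}$, where you need the sharp upper bounds $\mathbf{B}(\psi-3K_{\Mzv})\subseteq B_2^3$ and $\mathbf{B}(\psi-5K_{\Mzv})\subseteq B_2^2$. Neither of your two suggestions works. Your option (a), writing $\psi-aK_{\Mzv}=(\psi-K_{\Mzv})+\tfrac{a-1}{3}B_2$ and multiplying a base-point-free section by an effective one, only yields $\mathbf{B}(D)\subseteq B_2$; it cannot push the base locus down to the deeper strata $B_2^2$ or $B_2^3$, because the $B_2$-factor of any such section vanishes on an entire irreducible component of $B_2$. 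The phrase ``$S_7$-symmetrizing'' does not help here, since every $S_7$-translate of such a section still vanishes along some full component of $B_2$. Your option (b) is circular: the proofs of Proposition~\ref{prop:firstflip} and Lemmas~\ref{lem:flippingbase} and~\ref{lem:amplenessofD} in Section~\ref{sec:Moriprogram} explicitly invoke Proposition~\ref{prop:stablebaselocus} to conclude that $mD$ is movable outside $B_2^3$ (resp.\ $B_2^2$) and hence that $D$ is nef or ample on the new models. You cannot then cite those constructions to establish the stable base locus.

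What the paper actually does is a direct computation with Keel's relations: for $E=\tfrac{3}{2}(\psi-5K_{\Mzv})=5B_2+3B_3$ and for each pair $(B_I,B_J)$ with $|I|=2$, $|J|=3$, $B_I\cap B_J\neq\emptyset$, it exhibits an explicit effective boundary combination linearly equivalent to $E$ in which the coefficients of $B_I$ and $B_J$ are zero. This shows a general point of $B_I$ and of $B_I\cap B_J$ lies outside $\mathbf{B}(E)$, forcing $\mathbf{B}(E)\subseteq B_2^2$. Similarly, for $F=\tfrac{3}{2}(\psi-3K_{\Mzv})=4B_2+3B_3$ and each pair $B_I,B_K$ of intersecting components of $B_2$, an explicit equivalent avoiding both $B_I$ and $B_K$ gives $\mathbf{B}(F)\subseteq B_2^3$. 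These explicit representatives are found by computer search among boundary expressions and are the substantive content you are missing.
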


\begin{proof}
By \cite[Theorem 1.2]{KM96} and Corollary \ref{cor:intersection}, the nef cone of $\Mzv$ is generated by $\psi-K_{\Mzv}$ and $K_{\Mzv}+\frac{1}{3}\psi$. Moreover, $K_{\Mzv}+\frac{1}{3}\psi$ is the pull-back of an ample divisor on $\Mza$ where $A = (\frac{1}{3}, \frac{1}{3}, \cdots, \frac{1}{3})$ (See the proof of Theorem 3.1 of \cite{Moo13}). In particular, the right hand side of Equation (7) is zero.). The opposite extremal ray $\psi-K_{\Mzv}$ is also semi-ample. Indeed, by comparing the intersection numbers, it is straightforward that $\psi-K_{\Mzv}$ is proportional to the pull-back of the canonical polarization on the Veronese quotient $V_{A}^{3}$ where $A = (\frac{3}{7}, \cdots, \frac{3}{7})$ (\cite[Theorem 2.1]{GJMS13}). Therefore two endpoints of this interval, and hence all divisors in the interval are semi-ample divisors. 

If $D \in (K_{\Mzv}+\frac{1}{3}\psi, B_{3}]$, then $\mathbf{B}(D) \subset B_{3}$ since $K_{\Mzv}+\frac{1}{3}\psi$ is semi-ample and $D$ is an effective linear combination of $K_{\Mzv}+\frac{1}{3}\psi$ and $B_{3}$. By Corollary \ref{cor:intersection}, $F_{1,1,1,4}\cdot D < 0$ so $F_{1,1,1,4} \subset \mathbf{B}(D)$. Since $F_{1,1,1,4}$ covers an open dense subset of $B_{3}$, $\mathbf{B}(D) = B_{3}$. 

If $D \in [B_{2}, \psi-K_{\Mzv})$, then $\mathbf{B}(D) \subset B_{2}$ by a similar reason. By Corollary \ref{cor:intersection}, $F_{1,2,2,2}\cdot D < 0$ if $D \in [B_{2}, \psi-K_{\Mzv})$, thus $F_{1,2,2,2} \subset \mathbf{B}(D)$. If $D \in [B_{2}, \psi-3K_{\Mzv})$, $A \cdot D < 0$ and $A$ covers a dense open subset of $B_{2}^{2}$. Thus $B_{2}^{2} \subset \mathbf{B}(D)$. Finally, if $D \in [B_{2}, \psi-5K_{\Mzv})$, $C_{5}\cdot D < 0$. Since $C_{5}$ covers an open dense subset of $B_{2}$, $B_{2} \subset \mathbf{B}(D)$. In particular, we obtain Item (5).

Now it is sufficient to show that $\mathbf{B}(D) \subset B_{2}^{3}$ if $D \in [\psi-3K_{\Mzv}, \psi-K_{\Mzv})$ and $\mathbf{B}(D) \subset B_{2}^{2}$ if $[\psi-5K_{\Mzv}, \psi-3K_{\Mzv})$. Let $B_{I}$ be an irreducible component of $B_{2}$ and $B_{J}$ be an irreducible component of $B_{3}$ such that $B_{I} \cap B_{J} \ne \emptyset$. For $E = 5B_{2}+3B_{3} = \frac{3}{2}(\psi-5K_{\Mzv})$, by using Keel's relations (\cite[550p]{Kee92}) and a computer algebra system, we can find a divisor $E' \in |E|$ such that $E'$ is a non-negative integral linear combination of boundary divisors such that the coefficients of $B_{I}$ and $B_{J}$ are zero. For example, if $I = \{1,2\}$ and $J = \{3,4,5\}$, 
\begin{eqnarray*}
E &\equiv& 12B_{\{1,4\}}+9\left(B_{\{2,5\}}+B_{\{2,6\}}+B_{\{5,6\}}\right)\\
&& +6\left(B_{\{1,3\}}+B_{\{1,7\}}+B_{\{2,3\}}+B_{\{2,7\}}+B_{\{3,4\}}+B_{\{3,7\}}+B_{\{4,7\}}\right)\\
&&+3\left(B_{\{1,5\}}+B_{\{1,6\}}+B_{\{3,5\}}+B_{\{3,6\}}+B_{\{4,5\}}+B_{\{4,6\}}+B_{\{5,7\}}+B_{\{6,7\}}\right)\\
&& + 15B_{\{2,5,6\}}+12\left(B_{\{1,4,7\}}+B_{\{1,3,4\}}\right)\\
&& + 6\left(B_{\{1,3,7\}}+B_{\{1,4,5\}}+B_{\{1,4,6\}}+B_{\{2,3,5\}} + B_{\{2,3,6\}}+B_{\{2,3,7\}}+B_{\{2,5,7\}}+B_{\{2,6,7\}}+B_{\{3,4,7\}}\right)\\
&& + 3\left(B_{\{1,5,6\}}+B_{\{3,5,6\}}+B_{\{4,5,6\}}+B_{\{5,6,7\}}\right).
\end{eqnarray*}
Similarly, if $I = \{1,2\}$ and $J = \{1,2,3\}$, 
\begin{eqnarray*}
E &\equiv& 12B_{\{1,4\}} + 9\left(B_{\{2,6\}}+B_{\{2,7\}}+B_{\{6,7\}}\right)\\
&& + 6\left(B_{\{1,3\}}+B_{\{1,5\}}+B_{\{2,3\}}+B_{\{2,5\}}+B_{\{3,4\}}+B_{\{3,5\}}+B_{\{4,5\}}\right)\\
&& + 3\left(B_{\{1,6\}}+B_{\{1,7\}}+B_{\{3,6\}}+B_{\{3,7\}}+B_{\{4,6\}}+B_{\{4,7\}}+B_{\{5,6\}}+B_{\{5,7\}}\right)\\
&& + 15B_{\{2,6,7\}} + 12\left(B_{\{1,3,4\}}+B_{\{1,4,5\}}\right)\\
&& + 6\left(B_{\{1,3,5\}}+B_{\{1,4,6\}}+B_{\{1,4,7\}}+B_{\{2,3,5\}}+B_{\{2,3,6\}}+B_{\{2,3,7\}}+B_{\{2,5,6\}}+B_{\{2,5,7\}}+B_{\{3,4,5\}}\right)\\
&& + 3\left(B_{\{1,6,7\}}+B_{\{3,6,7\}}+B_{\{4,6,7\}}+B_{\{5,6,7\}}\right).
\end{eqnarray*}
These two cases cover all cases that $B_{I} \cap B_{J} \ne \emptyset$ up to the $S_{7}$-action. Thus the support of $E'$ does not contain a general point of $B_{I}$ and a general point of $B_{I} \cap B_{J}$. Therefore $\mathbf{B}(E)$ must be contained in $B_{2}^{2}$. Since $\psi-K_{\Mzv}$ is semi-ample, for all divisor $D \in [\psi-5K_{\Mzv}, \psi-K_{\Mzv})$, $\mathbf{B}(D) \subset B_{2}^{2}$ and Item (4) was shown.

Finally, let $B_{I}, B_{K}$ be two irreducible components of $B_{2}$ whose intersection is nonempty. For $F = 4B_{2}+3B_{3}= \frac{3}{2}(\psi-3K_{\Mzv})$, by using a similar idea, we can find a divisor $F' \in |F|$ such that $F'$ is a non-negative integral linear combination of boundary divisors such that the coefficients of $B_{I}$ and $B_{K}$ are zero. Indeed, if $I = \{1,2\}$ and $K = \{3,4\}$, 
\begin{eqnarray*}
F &\equiv& 12B_{\{1,3\}}+9\left(B_{\{2,4\}}+B_{\{2,6\}}+B_{\{4,6\}}\right)\\
&& + 6\left(B_{\{1,5\}}+B_{\{1,7\}}+B_{\{3,5\}}+B_{\{3,7\}}\right)\\
&& + 3\left(B_{\{2,5\}}+B_{\{2,7\}}+B_{\{4,5\}}+B_{\{4,7\}}+B_{\{5,6\}}+B_{\{5,7\}}+B_{\{6,7\}}\right)\\
&& +18B_{\{2,4,6\}} + 15\left(B_{\{1,3,5\}}+B_{\{1,3,7\}}\right)\\
&& + 6\left(B_{\{1,5,7\}}+B_{\{2,4,5\}}+B_{\{2,4,7\}}+B_{\{2,5,6\}}+B_{\{2,6,7\}}+B_{\{3,5,7\}}+B_{\{4,5,6\}}+B_{\{4,6,7\}}\right)\\
&& + 3\left(B_{\{1,2,3\}}+B_{\{1,3,4\}}+B_{\{1,3,6\}}\right).
\end{eqnarray*}
Thus a general point of $B_{2}^{2}$ is not contained in $\mathbf{B}(F)$, too. The only remaining locus in $B_{2}$ is $B_{2}^{3}$. Hence $\mathbf{B}(F) \subset B_{2}^{3}$ and the same holds for all $D \in [\psi-3K_{\Mzv}, \psi-K_{\Mzv})$. 
\end{proof}

We summarize the above result as Figure \ref{fig:stablebaselocus}. 

\begin{figure}[!ht]
\begin{tikzpicture}[scale=0.8]
	\draw[->][line width=1.5pt][lightgray] (5, 0) -- (10, 0);
	\draw[<-][line width=1.5pt] (0,0) -- (5,0);
	\draw[->][line width=1.5pt][lightgray] (5,0) -- (5,5);
	\draw[->][line width=1.5pt] (5,0) -- (9.3, 2.6);
	\draw[->][line width=1.5pt] (5,0) -- (8.5, 3.5);
	\draw[->][line width=1.5pt] (5,0) -- (3.4, 4.7);
	\draw[->][line width=1.5pt] (5,0) -- (1.5, 3.5);
	\draw[->][line width=1.5pt] (5,0) -- (0.7, 2.6);
	\node at (11,0) {$K_{\Mzv}$};
	\node at (-1, 0) {$B_{2}$};
	\node at (5,5.5) {$\psi$};
	\node at (9.8, 2.6) {$B_{3}$};
	\node at (8.7, 4) {$K_{\Mzv} + \frac{1}{3}\psi$};
	\node at (8.2, 2.5) {$B_{3}$};
	\node at (1, 1) {$B_{2}$};
	\node at (5.5,3.5) {$\emptyset$};
	\node at (1.9,2.4) {$B_{2}^{2}$};
	\node at (3,3.2) {$B_{2}^{3}$};
	\node at (3.4, 5) {$\psi - K_{\Mzv}$};
	\node at (1.5, 3.8) {$\psi - 3K_{\Mzv}$};
	\node at (-0.5, 2.6) {$\psi - 5K_{\Mzv}$};
\end{tikzpicture}
\caption{Stable base locus decomposition of $\Mzv$}\label{fig:stablebaselocus}
\end{figure}

\section{Mori's program for $\Mzv$}\label{sec:Moriprogram}

In this section, we show the first main theorem (Theorem \ref{thm:Moriprogramintro}) of this paper. 

\begin{theorem}\label{thm:Moriprogram}
Let $D$ be a symmetric effective divisor of $\Mzv$. Then:
\begin{enumerate}
\item If $D \in (\psi - K_{\Mzv}, K_{\Mzv} + \frac{1}{3}\psi)$, $\Mzv(D) \cong \Mzv$. 
\item If $D \in [K_{\Mzv}+\frac{1}{3}\psi, B_{3})$, $\Mzv(D) \cong \Mza$, the moduli space of weighted pointed stable curves with weight $A = \left(\frac{1}{3}, \cdots, \frac{1}{3}\right)$.
\item If $D = \psi - K_{\Mzv}$, $\Mzv(D)$ is isomorphic to the Veronese quotient $V_{A}^{3}$ where $A = \left(\frac{4}{7}, \cdots, \frac{4}{7}\right)$.
\item If $D \in (\psi - 3K_{\Mzv}, \psi - K_{\Mzv})$, $\Mzv(D) \cong \Mzv^{3}$, which is a flip of $\Mzv$ over $V_{A}^{3}$. The flipping locus is $B_{2}^{3}$.
\item If $D = \psi - 3K_{\Mzv}$, $\Mzv(D)$ is a small contraction of $\Mzv^{3}$.
\item If $D \in (\psi - 5K_{\Mzv}, \psi - 3K_{\Mzv})$, $\Mzv(D) \cong \Mzv^{2}$, which is a flip of $\Mzv^{3}$ over $\Mzv(\psi - 3K_{\Mzv})$. The flipping locus is the proper transform of $B_{2}^{2}$.
\item If $D \in (B_{2}, \psi - 5K_{\Mzv}]$, $\Mzv(D) \cong \Mzv^{1}$, which is a divisorial contraction of $\Mzv^{2}$. The contracted divisor is the proper transform of $B_{2}$.
\item If $D = B_{2}$ or $B_{3}$, $\Mzv(D)$ is a point.
\end{enumerate}
\end{theorem}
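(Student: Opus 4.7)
The plan is to promote the stable base locus decomposition of Proposition \ref{prop:stablebaselocus} to a full Mori chamber decomposition. Since $\mathrm{N}^{1}(\Mzv)_{\QQ}^{S_{7}}$ is two-dimensional, each chamber is an open cone between two consecutive rays and within any open chamber the projective model $\Mzv(D)$ is constant up to isomorphism, so it suffices to identify one model per chamber and to analyze the wall-crossings.

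Items (1), (2), (3), and (8) follow essentially directly from Proposition \ref{prop:stablebaselocus}. In (1), $D$ lies in the interior of the nef cone and is ample. In (2), $K_{\Mzv}+\frac{1}{3}\psi$ is the pullback of an ample class on $\Mza$ with $A=(1/3,\ldots,1/3)$ and $B_{3}$ is contracted by $\Mzv \to \Mza$, so writing $D$ as a positive combination and applying the projection formula identifies $\Mzv(D)$ with $\Mza$. In (3), $\psi-K_{\Mzv}$ is proportional to the pullback of the canonical polarization on the Veronese quotient $V_{A}^{3}$. In (8), $B_{2}$ and $B_{3}$ are extremal generators of $\mathrm{Eff}(\Mzv)^{S_{7}}$, so each graded piece $H^{0}(\Mzv,\cO(mB_{i}))$ is one-dimensional and $\Mzv(B_{i})$ is a point.

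The substance lies in items (4)--(7), the two flips and the divisorial contraction. My plan is to exploit the VGIT picture of Figure \ref{fig:commdiagram}: both $\Mzv$ and $V_{A}^{3}$ arise as $\SL_{4}$-GIT quotients of the incidence variety $I \subset \Mzzptt \times (\PP^{3})^{7}$ with different linearizations, and the walls at $\psi-K_{\Mzv}$, $\psi-3K_{\Mzv}$, and $\psi-5K_{\Mzv}$ pull back from VGIT walls on $I$. For each open chamber I would construct a linearization $L_{D}$ on $I$ so that the quotient $I \git_{L_{D}} \SL_{4}$ has canonical polarization pulling back to a positive multiple of $D$, and identify this quotient with $\Mzv^{3}$, $\Mzv^{2}$, or $\Mzv^{1}$ as appropriate. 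The changes in semistable loci across each VGIT wall then produce a flip when the changed locus has codimension $\geq 2$ and a divisorial contraction otherwise, matching the stable base locus prediction. The intersection numbers in Corollary \ref{cor:intersection} pin down the modified loci: $F_{1,2,2,2}\cdot D<0$ on $(\psi-3K_{\Mzv},\psi-K_{\Mzv})$ forces $B_{2}^{3}$ to be in the exceptional locus of $\Mzv \dashrightarrow \Mzv^{3}$, $A\cdot D<0$ on the next chamber forces the proper transform of $B_{2}^{2}$ to be exceptional, and $C_{5}\cdot D<0$ on the last chamber forces the proper transform of $B_{2}$ to be the exceptional divisor of $\Mzv^{2}\to \Mzv^{1}$, matching the covering-curve argument already used in the proof of Proposition \ref{prop:stablebaselocus}.

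The main obstacle will be making the VGIT construction explicit enough to verify the geometric descriptions in the introduction: that on $\Mzv^{3}$ no three irreducible components of $B_{2}$ meet, on $\Mzv^{2}$ they are disjoint, and on $\Mzv^{1}$ they are contracted. Equivalently, one must pin down the toroidal local model of $\Mzv$ along the triple intersection $B_{2}^{3}$, execute the flip locally there, and verify $\QQ$-factoriality and projectivity of the result. Given the simple normal crossing structure of $B_{2}$ on $\Mzv$ and the Picard rank two of each target model, once this local analysis is carried out the identification $\Mzv(D) \cong \Mzv^{i}$ on each chamber follows from the pullback of an ample class on $\Mzv^{i}$ lying in the predicted chamber, together with finite generation of the section ring of $D$.
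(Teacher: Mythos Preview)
Your treatment of items (1)--(3) and (8) matches the paper's: ampleness in the interior of the nef cone, identification of the two nef boundary rays as pullbacks from $\Mza$ and $V_{A}^{3}$, and rigidity of the extremal boundary divisors.

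For items (4)--(7) your VGIT plan is a genuinely different route, and it has a gap at the very start. You assert that $\Mzv$ and $V_{A}^{3}$ are both GIT quotients of the incidence variety $I \subset \Mzzptt \times (\PP^{3})^{7}$ with different linearizations. That is not what Figure~\ref{fig:commdiagram} says: $\Mzv$ is the quotient of $\Mzvptt$, while $V_{A}^{3}$ is the quotient of $I$; these are different master spaces, related by a forgetful/evaluation map, not by a change of linearization on a single space. There is no one-parameter family of linearizations on $I$ whose VGIT walls you can read off to recover $\Mzv^{3}$, $\Mzv^{2}$, $\Mzv^{1}$. Indeed the whole point of Section~\ref{sec:modular} is that even realizing the \emph{first} flip $\Mzv^{3}$ as a quotient forces one to pass to a genuinely new master space, the KKO incidence locus $J \subset \Uzzptt \times (\PP^{3})^{7}$; and the paper does not attempt anything similar for $\Mzv^{2}$ or $\Mzv^{1}$. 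So the VGIT shortcut you propose is not available, and you would have to build the master space from scratch before any wall-crossing argument could begin.

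The paper instead proves (4)--(7) by bare-hands birational geometry, never touching GIT. For the first flip it computes $N_{F/\Mzv}\cong\cO_{\PP^{1}}(-1)^{3}$ for each component $F$ of $B_{2}^{3}$, blows up to get $\PP^{1}\times\PP^{2}$ exceptional divisors with normal bundle $\cO(-1,-1)$, blows down in the other direction, and then checks directly that $D$ becomes ample on the result for $D$ in the claimed chamber. The second flip is harder: it requires a four-step factorization (two blow-ups, two blow-downs, one of them producing a singular point modeled on a cone over the cubic Veronese of $\PP^{3}$), with explicit normal-bundle bookkeeping at each stage (Lemmas~\ref{lem:descriptionofX}--\ref{lem:amplenessofD}). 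Ampleness of $D$ on $\Mzv^{2}$ is then verified by intersecting with the generating curve class on the contracted $\mathbb{F}_{3}$'s. The divisorial contraction (7) follows once one knows $\psi-5K_{\Mzv}$ is semiample on $\Mzv^{2}$, which uses that the proper transform of each component of $B_{2}$ is a Mori dream space. None of this is predicted by, or recoverable from, a VGIT picture on $I$.
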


Before proving Theorem \ref{thm:Moriprogram}, we describe some moduli spaces appear on the theorem. 

\subsection{Moduli of weighted pointed stable curves}\label{ssec:Hassettspace}
The moduli space $\Mza$ of weighted pointed stable curves, in Item (2), is constructed in \cite{Has03}. For a collection of positive rational numbers (so called weight data) $A = (a_{1}, a_{2}, \cdots, a_{n})$ with $0 < a_{i} \le 1$ and $\sum a_{i} > 2$, there is a fine moduli space of pointed curves $(C, x_{1}, \cdots, x_{n})$ such that
\begin{itemize}
\item $C$ is a reduced, connected projective curve of $p_{a}(C) = 0$;
\item $(C, \sum a_{i}x_{i})$ is a semi-log canonical pair;
\item $\omega_{C}+\sum a_{i}x_{i}$ is ample.
\end{itemize}
In contrast to $\Mzn$, for a subset $I \subset [n]$, if $\sum_{i \in I}a_{i}\le 1$ then $\{x_{i}\}_{i \in I}$ may collide at a smooth point of $C$. But because of the last condition, each tail of $C$ has sufficiently many marked points in the sense that their weight sum is greater than one. Also note that $\Mzn = \overline{\mathrm{M}}_{0, (1,1,\cdots, 1)}$. 

The moduli space $\Mza$ is smooth and birational to $\Mzn$. Furthermore, there is a reduction map $\rho_{A} : \Mzn \to \Mza$ for any weight data, which is a divisorial contraction. The map $\rho_{A}$ sends a pointed curve $(C, x_{1}, x_{2}, \cdots, x_{n})$ to a new curve $(\overline{C}, \bar{x}_{1}, \bar{x}_{2}, \cdots, \bar{x}_{n})$ which is obtained by contracting all tails with weight sums $\le 1$ to the attaching point. 

\begin{example}
For the case of $n = 7$ and $A = \left(\frac{1}{3}, \cdots, \frac{1}{3}\right)$, $\rho_{A}$ is the contraction of $B_{3}$. A general point $(C_{1} \cup C_{2}, x_{1}, x_{2}, \dots, x_{7})$ has a tail with three marked points. Then the sum is precisely one, so the tail is contracted to a point. Note that it forgets the cross ratio of three marked points and a nodal point. Thus the image of $B_{3}$ is a codimension two subvariety of $\Mza$. Figure \ref{fig:reductionmap} shows the contraction. The number on a marked point is the multiplicity. 

\begin{figure}[!ht]
\begin{tikzpicture}[scale=0.4]
	\draw[line width=1pt] (0, 5) -- (6, 0);
	\draw[line width=1pt] (5, 0) -- (12, 5);
	\fill (1.2, 4) circle (5pt);
	\fill (2.4, 3) circle (5pt);
	\fill (3.6, 2) circle (5pt);
	\fill (7.4, 1.8) circle (5pt);
	\fill (8.4, 2.5) circle (5pt);
	\fill (9.4, 3.2) circle (5pt);
	\fill (10.4, 3.9) circle (5pt);

	\node (to) at (15, 3) {$\Rightarrow$};
	
	\draw[line width=1pt] (17, 0) -- (24, 5);
	\fill (19.4, 1.8) circle (5pt);
	\fill (20.4, 2.5) circle (5pt);
	\fill (21.4, 3.2) circle (5pt);
	\fill (22.4, 3.9) circle (5pt);
	\fill (17.6, 0.5) circle (5pt);
	\node (3) at (17.6, 1.3) {$3$};
\end{tikzpicture}
\caption{The reduction map $\rho_{A} : \Mzv \to \Mza$ where $A = \left(\frac{1}{3}, \cdots, \frac{1}{3}\right)$}\label{fig:reductionmap}
\end{figure}
\end{example}

\subsection{Veronese quotients}\label{ssec:Veronesequotients}

The Veronese quotients $V_{A}^{d}$ in Item (3) and their geometric properties have been studied in \cite{Gia13, GJM13, GJMS13}. Originally, they are constructed as GIT quotients of an incidence variety of the Chow variety of rational normal curves in $\PP^{d}$ and projective spaces. 

Let $\mathrm{Chow}_{1,d}(\PP^{d})$ be the irreducible component of the Chow variety which parametrizes rational normal curves and their degenerations. Consider the incidence variety 
\[
	I := \{(C, x_{1}, \cdots, x_{n}) \in \mathrm{Chow}_{1,d}(\PP^{d})
	 \times (\PP^{d})^{n} \;|\; x_{i} \in C\}.
\]
There is a natural $\SL_{d+1}$-action on $I$ and $\mathrm{Chow}_{1,d}(\PP^{d}) \times (\PP^{d})^{n}$. Also there is a canonical polarization $\cO_{\mathrm{Chow}}(1)$ on $\mathrm{Chow}_{1,d}(\PP^{d})$. For a sequence of nonnegative rational numbers $(\gamma, a_{1}, a_{2}, \cdots, a_{n})$, define a $\QQ$-polarization on $I$ which is the pull-back of 
\[
	L_{A} := \cO_{\mathrm{Chow}}(\gamma) \otimes \cO(a_{1}) \otimes \cdots 
	\otimes \cO(a_{n})
\]
on $\mathrm{Chow}_{1,d}(\PP^{d}) \times (\PP^{d})^{n}$. We will normalize the linearization by imposing a numerical condition $(d-1)\gamma + \sum a_{i} = d+1$. Thus $\gamma$ is determined by $A := (a_{1}, a_{2}, \cdots, a_{n})$ and $d$. If $0 < a_{i} < 1$ and $2 < \sum a_{i} \le d+1$ (hence $0 \le \gamma < 1$), then the semistable locus $I^{ss}$ is nonempty (\cite[Proposition 2.10]{GJM13}), so we are able to obtain a nonempty GIT quotient $V_{A}^{d} := I\git_{L_{A}}\SL_{d+1}$. 

\begin{remark}
A simple observation on the semistability is that every stable curve is non-degenerate. A non-degenerate degree $d$ curve in $\PP^{d}$ has several nice geometric properties: 1) Every connected subcurve of degree $e$ spans $\PP^{e} \subset \PP^{d}$, and 2) all singularities are analytically locally the union of coordinate axes in some $\CC^{k}$ (\cite[Corollary 2.4]{GJM13}). 
\end{remark}

For simplicity, consider general polarizations such that $I^{ss} = I^{s}$. These quotients have modular interpretation, as moduli spaces of stable polarized pointed curves. For a precise definition and proof, consult \cite[Section 5.1]{GJM13}. 

For any weight data $A$ and $d > 0$, there is a reduction map $\phi : \Mzn \to V_{A}^{d}$ (\cite[Theorem 1.1]{GJM13}), which preserves $\mathrm{M}_{0,n}$. For each (possibly reducible) connected \emph{tail} $C'$ of $(C, x_{1}, x_{2}, \cdots, x_{n}) \in \Mzn$, we may define a numerical value 
\[
	\sigma(C') := 
	\min\left\{\max\left\{\Big\lceil \frac{\sum_{x_{i} \in C'}a_{i} - 1}{1-\gamma}\Big\rceil,
	0\right\}, d\right\}.
\]
Because the dual graph of $C$ is a tree, we can define $\sigma(C')$ for \emph{every} irreducible component $C'$, by setting that $\sigma(C') := \sigma(C'' \cup C') - \sigma(C'')$ for any tail $C''$ such that $C'' \cup C'$ is connected. The reduction map $\phi$ sends $(C, x_{1}, x_{2}, \cdots, x_{n})$ to a new curve $(\overline{C}, \bar{x}_{1}, \cdots, \bar{x}_{n})$ which is obtained by contracting all irreducible components $C'$ with $\sigma(C') = 0$. 

\begin{example}\label{ex:Veronesequotient}
Consider $n = 7$, $d = 3$ and $A = \left(\frac{4}{7}, \cdots, \frac{4}{7}\right)$ (hence $\gamma = 0$) case. Then there are only two types of curves in $\Mzv$ with contractions.
\begin{enumerate}
\item A chain of curves $C = C_{1} \cup C_{2} \cup C_{3}$ such that $C_{1}$ with two marked points, $C_{2}$ with a marked point, and (possibly reducible) $C_{3}$ with four marked points. Then $C_{2}$ is contracted to a point. 
\item A comb of rational curves with three tails $C_{1}, C_{2}, C_{3}$ with two marked points respectively, and a spine $C_{4}$ with a marked points. $C_{4}$ is contracted to a triplenodal singularity with a marked point on it.
\end{enumerate}
Note that for the first case, the contracted component has only three special points. Thus around the point, $\Mzv$ and $V_{A}^{3}$ are locally isomorphic. But in the second case, the spine has four special points so it has a one-dimensional moduli. Thus the map $\phi$ contracts the loci of such curves, which are F-curves of type $F_{1,2,2,2}$. So $\phi$ is a small contraction.
\end{example}

\begin{figure}[!ht]
\begin{tikzpicture}[scale=0.5]
	\draw[line width=1pt] (0, 1) -- (6, 0);
	\draw[line width=1pt] (0, 4) -- (6, 5);
	\draw[line width=1pt] (1, 0) -- (1, 5);
	\fill (1, 2.5) circle (4pt);
	\fill (2, 4.35) circle (4pt);
	\fill (4, 4.65) circle (4pt);
	\fill (2, 0.65) circle (4pt);
	\fill (3, 0.5) circle (4pt);
	\fill (4, 0.3) circle (4pt);
	\fill (5, 0.15) circle (4pt);
	\node (to1) at (8, 2.5) {$\Rightarrow$};
	\draw[line width=1pt] (10, 2) -- (16, 5);
	\draw[line width=1pt] (10, 3) -- (16, 0);
	\fill (11, 2.5) circle (4pt);
	\fill (13, 3.5) circle (4pt);
	\fill (15, 4.5) circle (4pt);
	\fill (12, 2) circle (4pt);
	\fill (13, 1.5) circle (4pt);
	\fill (14, 1) circle (4pt);
	\fill (15, 0.5) circle (4pt);
	
	\draw[line width=1pt] (0, -5) -- (6, -6);
	\draw[line width=1pt] (0, -2) -- (6, -1);
	\draw[line width=1pt] (0, -3.5) -- (6, -3.5);
	\draw[line width=1pt] (1, -6) -- (1, -1);
	\fill (1, -2.7) circle (4pt);
	\fill (2, -1.65) circle (4pt);
	\fill (4, -1.35) circle (4pt);
	\fill (2, -5.35) circle (4pt);
	\fill (4, -5.7) circle (4pt);
	\fill (2, -3.5) circle (4pt);
	\fill (4, -3.5) circle (4pt);
	\node (to2) at (8, -3.5){$\Rightarrow$};
	\draw[line width=1pt] (13, -5.5) -- (13, -1);
	\draw[line width=1pt] (14, -4) -- (10, -6);
	\draw[line width=1pt] (12, -4) -- (16, -6);
	\fill (13, -4.5) circle (4pt);
	\fill (12, -5) circle (4pt);
	\fill (11, -5.5) circle (4pt);
	\fill (14, -5) circle (4pt);
	\fill (15, -5.5) circle (4pt);
	\fill (13, -3) circle (4pt);
	\fill (13, -2) circle (4pt);		
\end{tikzpicture}
\caption{The reduction map $\phi : \Mzv \to V_{A}^{3}$ where $A = \left(\frac{4}{7}, \cdots, \frac{4}{7}\right)$}\label{fig:reductionmapVeronese}
\end{figure}

\begin{remark}\label{rem:GITquotient}
An important observation for Example \ref{ex:Veronesequotient} is that we may replace the Chow variety by moduli space of stable maps $\Mzzptt$. There is a cycle map
\[
	f : \overline{\mathrm{M}}_{0,0}(\PP^{d}, d) \to \mathrm{Chow}_{1,d}(\PP^{d}).
\]
When $d \le 3$, If we take the locus $\overline{\mathrm{M}}_{0,0}(\PP^{d}, d)^{nd}$ parametrizes stable maps with non-degenerated images and if $\mathrm{Chow}_{1,d}(\PP^{d})^{nd}$ is the image of it, then the restricted cycle map is isomorphism because there is no degree 0 component with positive dimensional moduli. Therefore 
\[
	\overline{\mathrm{M}}_{0,0}(\PP^{3}, 3)^{nd} \times (\PP^{3})^{n} 
	\to \mathrm{Chow}_{1,3}(\PP^{3})^{nd} \times (\PP^{3})^{n}
\]
is an isomorphism and $I^{s}$ is a subset of $\mathrm{Chow}_{1,3}(\PP^{3})^{nd} \times (\PP^{3})^{n}$. Therefore we may replace the Chow variety by $\Mzzptt$. 

Furthermore, $\Mzn \cong \Mznpdd\git \SL_{d+1}$ for an appropriate linearization (\cite[Proposition 4.6]{GJM13}). And the morphism $\Mzv \to V_{A}^{3}$ is obtained by taking quotient map of 
\[
	\Mzvptt \to \Mzzptt \times (\PP^{3})^{7}.
\]
\end{remark}

The other birational models $\Mzv^{i}$ with $i = 1,2,3$ are new spaces which don't appear on literatures. We will describe them concretely using explicit blow-ups and downs.

\subsection{Outline of the proof}\label{ssec:outline}

The proof of Theorem \ref{thm:Moriprogram} involves explicit but long computations of several birational modifications. So we leave an outline of the proof here and prove it in next several sections. 

\begin{proof}[Outline of the proof of Theorem \ref{thm:Moriprogram}]
Since the symmetric nef cone is generated by $\psi - K_{\Mzv}$ and $K_{\Mzv}+\frac{1}{3}\psi$, $D$ in Item (1) is an ample divisor. Thus $\Mzv(D) \cong \Mzv$. 

Item (2) is established in \cite[Theorem 3.1]{Moo13}. If $D = K_{\Mzv} + \frac{1}{3}\psi$, $\Mzv(D) \cong \Mza$. Because for $D$ in the range of Item (2) the stable base locus $\mathbf{B}(D)$ is $B_{3}$, after removing $B_{3}$, we obtain Item (2) in general.

Consider the reduction map $\phi : \Mzv \to V_{A}^{3}$ in Item (3). By applying \cite[Theorem 3.1]{GJMS13}, we can compute the pull-back $D_{A}$ of the canonical polarization on $V_{A}^{3}$. With the notation in \cite{GJMS13}, Item (3) is the case that $\gamma = 0$, $A = \left(\frac{4}{7}, \frac{4}{7}, \cdots, \frac{4}{7}\right)$. So it is straightforward to check that $F_{1,2,2,2}\cdot D_{A} = 0$. Since $\dim \mathrm{N}^{1}(\Mzv)^{S_{7}}_{\QQ} = 2$, this implies that $D_{A}$ is proportional to $\psi - K_{\Mzv}$ by Corollary \ref{cor:intersection}. Therefore $\Mzv(\psi - K_{\Mzv}) \cong \Mzv(D_{A}) \cong V_{A}^{3}$. 

Items (4), (5), (6), and (7) are obtained by careful computations of flips and contractions. We give a proof of Item (4) in Proposition \ref{prop:firstflip}. Items (5) and (6) are proved in Lemma \ref{lem:flippingbase} and Proposition \ref{prop:secondflip} respectively. We prove Item (7) in Proposition \ref{prop:divcontraction}. 

Since $B_{2}$ and $B_{3}$ are rigid, Item (8) follows immediately.
\end{proof}

\begin{remark}
The direction toward canonical divisor have been well understood for all $n$ and all (possibly non-symmetric) weight data. For every $n$ and $A = \left(a_{1}, a_{2}, \cdots, a_{n}\right)$, 
\[
	\Mzn(K_{\Mzn}+\sum a_{i}\psi_{i}) \cong \Mza.
\]
For a proof, see \cite{Moo13}. Also for a generalization to $\overline{\cM}_{g,n}$ with $g > 0$, consult \cite{Moo11b}.
\end{remark}

\subsection{First flip} \label{ssec:firstflip}

In this section, we describe the first flip $\Mzv \dashrightarrow \Mzv^{3}$ in terms of blow-ups and downs. 

\begin{proposition}\label{prop:firstflip}
Let $\widetilde{\mathrm{M}}_{0,7}^{3}$ be the blow-up of $\Mzv$ along $B_{2}^{3}$. A connected component of the exceptional locus is isomorphic to $\PP^{1} \times \PP^{2}$. Let $\Mzv^{3}$ be the blow-down of these exceptional locus to the opposite direction. Then $\Mzv^{3}$ is smooth and it is $D$-flip of $\phi : \Mzv \to V_{A}^{3}$ for $D \in (\psi - 3K_{\Mzv}, \psi - K_{\Mzv})$ and $\Mzv(D) \cong \Mzv^{3}$.
\end{proposition}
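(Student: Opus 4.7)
The plan has three stages: analyze $B_2^3$ and the normal bundle of an F-curve, perform the blow-up and opposite blow-down via Fujiki--Nakano, and identify the result with $\Mzv(D)$.

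First I would identify the irreducible components of $B_2^3$ as F-curves of type $F_{1,2,2,2}$. Three components $B_{I_2},B_{I_3},B_{I_4}$ of $B_2$ have nonempty triple intersection only when their index sets fit into a partition $[7]=\{*\}\sqcup I_2\sqcup I_3\sqcup I_4$ with $|I_j|=2$, and the intersection is the F-curve $F=F_{1,2,2,2}\cong\PP^1$; distinct such $F$'s are disjoint because $B_2^4=\emptyset$. Simple normal crossing of $B$ then splits $N_{F/\Mzv}=\bigoplus_{j=2}^4 N_{B_{I_j}/\Mzv}|_F$, and the F-curve intersection formula with $J=I_j$ gives $\deg N_{B_{I_j}/\Mzv}|_F=F\cdot B_{I_j}=-1$. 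Hence $N_{F/\Mzv}\cong\cO_{\PP^1}(-1)^{\oplus 3}$ and each connected component of the exceptional divisor is $\PP(N_{F/\Mzv})\cong\PP(\cO^{\oplus 3})\cong\PP^1\times\PP^2$, with the first projection $p_1$ serving as the structure map over $F$.

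The self-intersection bundle is computed from the identity $\cO_{\PP(E\otimes L)}(-1)\cong\cO_{\PP(E)}(-1)\otimes p_1^*L$ applied to $N_{F/\Mzv}=\cO_{\PP^1}(-1)\otimes\cO^{\oplus 3}$:
\[
\cO_{E_F}(E_F)\;\cong\;\cO_{\PP(N_{F/\Mzv})}(-1)\;\cong\;\cO_{\PP^1\times\PP^2}(-1,-1),
\]
which restricts to $\cO_{\PP^1}(-1)$ on every fiber of the second projection $p_2:\PP^1\times\PP^2\to\PP^2$. Fujiki--Nakano contractibility then produces a smooth projective variety $\Mzv^3$ and a blow-down $\tau:\widetilde{\mathrm{M}}_{0,7}^{3}\to\Mzv^3$ collapsing each $E_F$ along $p_2$ onto a copy of $\PP^2$. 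Writing $\sigma$ for the blow-up $\widetilde{\mathrm{M}}_{0,7}^{3}\to\Mzv$ and $\phi:\Mzv\to V_A^3$ for the small contraction of $B_2^3$, each $p_2$-fiber $\ell\subset E_F$ maps isomorphically onto $F$ under $\sigma|_{E_F}=p_1$ and is then contracted by $\phi$; hence $\phi\circ\sigma$ factors through $\tau$, giving a birational morphism $\phi':\Mzv^3\to V_A^3$ that collapses each $\PP^2$ to a point and is an isomorphism elsewhere.

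To conclude $\Mzv(D)\cong\Mzv^3$ for $D=\psi-tK_{\Mzv}$ with $t\in(1,3)$, observe from Corollary \ref{cor:intersection} that $D\cdot F=1-t<0$, while $E_F\cdot\ell=-1$ for a $p_2$-fiber $\ell$. Therefore $\sigma^*D-(t-1)\sum_F E_F$ has trivial degree on every $p_2$-fiber and descends to a Cartier divisor $D'$ on $\Mzv^3$. Testing on a line $L\subset\PP^2\subset\Mzv^3$ lifted as $\{y_0\}\times L\subset E_F$ yields $D'\cdot L=t-1>0$, so $D'$ is $\phi'$-ample; combined with the ampleness of $\phi'_*D'$ on $V_A^3$ (recognized as a positive multiple of the canonical polarization via the same divisor class computation used for Item (3) of Theorem \ref{thm:Moriprogram}), the relative Kleiman criterion yields honest ampleness of $D'$ on $\Mzv^3$. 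Thus $\Mzv(D)\cong\Mzv^3$ and the whole configuration realizes the $D$-flip of $\phi$ with flipping locus $B_2^3$. The main obstacle is this last step: extending $\phi'$-ampleness of $D'$ to global ampleness amounts to identifying $\phi'_*D'$ as a positive multiple of the canonical polarization on $V_A^3$. Everything else is a formal consequence of the simple normal crossing geometry, the intersection table of Corollary \ref{cor:intersection}, and the Fujiki--Nakano criterion applied to the $(-1,-1)$-normal bundle of each $E_F$.
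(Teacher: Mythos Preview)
Your construction of the flip is correct and matches the paper's: the identification of each component of $B_2^3$ as an $F_{1,2,2,2}$-curve, the normal bundle computation $N_{F/\Mzv}\cong\cO_{\PP^1}(-1)^{\oplus 3}$, the description of $E_F\cong\PP^1\times\PP^2$ with $\cO_{E_F}(E_F)\cong\cO(-1,-1)$, the smooth blow-down to $\Mzv^3$, and the factorization through $V_A^3$ are all as in the paper (which cites rigidity rather than Fujiki--Nakano, but the content is the same). Your verification that $D'\cdot L=t-1>0$ for a line $L$ in an exceptional $\PP^2$ is also correct and agrees with the paper's computation $B_2\cdot L=3$, $B_3\cdot L=-3$.

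The genuine gap is exactly the step you flag as the ``main obstacle,'' and your proposed resolution does not work. For $t\in(1,3)$ the Weil divisor $\phi'_*D'=\phi_*D$ on $V_A^3$ is \emph{not} $\QQ$-Cartier, let alone a positive multiple of the canonical polarization: if it were $\QQ$-Cartier then $\phi^*\phi_*D=D$ (since $\phi$ is small), forcing $D$ to be nef on $\Mzv$, which contradicts $D\cdot F_{1,2,2,2}=1-t<0$. So there is no ``ampleness of $\phi'_*D'$'' to feed into a relative Kleiman argument, and $\phi'$-ampleness of $D'$ alone does not yield global ampleness.

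The paper closes this gap differently, using the stable base locus computation of Proposition~\ref{prop:stablebaselocus}. For $D\in[\psi-3K_{\Mzv},\psi-K_{\Mzv}]$ one has $\mathbf{B}(D)\subset B_2^3$ on $\Mzv$; since the flip is an isomorphism $\Mzv\setminus B_2^3\cong\Mzv^3\setminus\bigcup\PP^2$ and the section rings agree, this gives $\mathbf{B}(D')\subset\bigcup\PP^2$ on $\Mzv^3$. Hence every curve $C\subset\Mzv^3$ not contained in an exceptional $\PP^2$ meets some member of $|mD'|$ properly, so $D'\cdot C\ge 0$. Combined with your computation $D'\cdot L>0$ on the exceptional $\PP^2$'s, this shows $D'$ is nef on the closed interval and ample on the open interval $(\psi-3K_{\Mzv},\psi-K_{\Mzv})$, whence $\Mzv(D)\cong\Mzv^3$.
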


\begin{proof}
On $\Mzv$, $B_{2}^{3}$ is the disjoint union of 105 F-curves of type $F_{1,2,2,2}$. Take a component $F$ of $B_{2}^{3}$, which is an F-curve $B_{I} \cap B_{J} \cap B_{K}$ where $|I| = |J| = |K| = 2$. The normal bundle $N := N_{F/\Mzv}$ is isomorphic to $\cO(B_{I}) \oplus \cO(B_{J}) \oplus \cO(B_{K})|_{F}$. By \cite[Lemma 4.5]{KM96}, $N \cong \cO(-\psi_{p}) \oplus \cO(-\psi_{q}) \oplus \cO(-\psi_{r})$ where $p$, $q$, $r$ are attaching points of three tails. Since $F \cdot \psi_{x} = 1$ for any attaching point $x$, $N \cong \cO_{\PP^{1}}(-1)^{3}$. 

Let $\pi_{3}: \widetilde{\mathrm{M}}_{0,7}^{3} \to \Mzv$ be the blow-up. The blown-up space $\widetilde{\mathrm{M}}_{0,7}^{3}$ is a smooth variety. Also a connected component $E$ of the exceptional locus is $\PP(N) \cong \PP(\cO_{\PP^{1}}(-1)^{3}) \cong \PP^{1} \times \PP^{2}$ and the normal bundle $N_{E/\widetilde{\mathrm{M}}_{0,7}^{3}}$ is isomorphic to $\cO_{\PP^{1}\times \PP^{2}}(-1,-1)$. Thus for a point $y \in \PP^{2}$, the restricted normal bundle to a fiber $\PP^{1} \times \{y\}$ is $\cO_{\PP^{1}}(-1)$. Therefore there exists a smooth contraction $\Mzv^{3}$, which contracts the $\PP^{1}$-fibration structure of the exceptional divisor. Let $\pi_{3}': \widetilde{\mathrm{M}}_{0,7}^{3} \to \Mzv^{3}$ be the contraction. Since the positive dimensional fiber of $\pi_{3}'$ is contracted by $\phi \circ \pi_{3}$, there is a birational map $\phi_{3}' : \Mzv^{3} \to V_{A}^{3}$ such that $\phi \circ \pi_{3} = \phi_{3}' \circ \pi_{3}'$ by rigidity lemma (\cite[Proposition II.5.3]{Kol96}).
\[
	\xymatrix{& \widetilde{\mathrm{M}}_{0,7}^{3}
	\ar_{\pi_{3}}[ld]\ar^{\pi_{3}'}[rd]\\
	\Mzv\ar_{\phi}[rd] && \Mzv^{3}\ar^{\phi_{3}'}[ld]\\ & V_{A}^{3}}
\]

We claim that $\phi_{3}': \Mzv^{3} \to V_{A}^{3}$ is a $D$-flip for $D \in (\psi-3K_{\Mzv}, \psi-K_{\Mzv})$. The exceptional set of $\phi$ is exactly $B_{2}^{3} = \cup F_{1,2,2,2}$. From Corollary \ref{cor:intersection}, $-D \cdot F_{1,2,2,2} > 0$. Thus $-D$ is $\phi$-ample. Note that a connected component of the positive dimensional exceptional locus of $\phi_{3}'$ is isomorphic to $\PP^{2}$. Let $\widetilde{L}$ be a line class of type $(0,1)$ in the exceptional divisor $E \cong \PP^{1} \times \PP^{2}$ on $\widetilde{\mathrm{M}}_{0,7}^{3}$. And let $L := \pi_{3}'(\widetilde{L})$ which is a line on the exceptional locus of $\phi_{3}'$. Note that on $\phi_{3}'$-exceptional $\PP^{2}$, $B_{I}|_{\PP^{2}}$, $B_{J}|_{\PP^{2}}$, $B_{K}|_{\PP^{2}}$ are line classes. So $B_{2} \cdot L = 3$. On the other hand, $B_{3}$ intersects $E$ three times and each irreducible component of the intersection is isomorphic to $\{*\} \times \PP^{2} \subset \PP^{1} \times \PP^{2} \cong E$, the divisor $B_{3}$ on $\Mzv^{3}$ vanishes along $\PP^{2}$ with multiplicity three. Hence $B_{3}\cdot L = -3$. Now from $\psi - K_{\Mzv} = 2B_{2}+2B_{3}$, for $D \in (B_{2}, \psi - K_{\Mzv})$, $D \cdot L > 0$ so $D$ is $\phi_{3}'$-ample.

Furthermore, we can see that for $D \in (\psi-3K_{\Mzv}, \psi-K_{\Mzv})$, $D$ is ample on $\Mzv^{3}$. If a curve class $C$ is in the image of exceptional $\PP^{2}$, then we already proved that $C \cdot D \ge 0$. If $C$ is not contained in the exceptional locus, from Proposition \ref{prop:stablebaselocus}, $mD$ is movable for $m \gg 0$ on the outside of $B_{2}^{3}$ thus $C \cdot D \ge 0$ if $D \in [\psi - 3K_{\Mzv}, \psi-K_{\Mzv}]$. Therefore the nef cone of $\Mzv^{3}/S_{7}$ is generated by $\psi - K_{\Mzv}$ and $\psi-3K_{\Mzv}$. Since the ample cone is the interior of the nef cone, the desired result follows. 
\end{proof}

\begin{remark}
After the first flip, the proper transform of $B_{2}^{2}$ becomes a disjoint union of its irreducible components. Each irreducible component is isomorphic to $\PP^{1} \times \PP^{1}$. 
\end{remark}

\subsection{Second flip}\label{ssec:secondflip}

The description of the second flip is more complicate. It is a composition of two smooth blow-ups, a smooth blow-down and a singular blow-down. In this section, we will describe the second flip. Since the flipping locus is the disjoint union of irreducible components of the proper transform of $B_{2}^{2}$, it is enough to focus on the modification on an irreducible component. We will give an outline of the description first, and after that we give justifications of statements as a collection of lemmas. Figure \ref{fig:secondflip} shows the decomposition of the flip. By abusing notation, we say $B_{2}^{2}$ for the proper transform of $B_{2}^{2}$ on $\Mzv^{3}$. 

\begin{figure}[!ht]
\begin{tikzpicture}[scale=0.6]
	\draw[line width=1pt] (0,0) -- (5,0);
	\draw[line width=1pt] (5,0) -- (7,2);
	\draw[line width=1pt] (7,2) -- (2,2);
	\draw[line width=1pt] (0,0) -- (2,2);
	\node at (3.5,1) {$X_{0}$};
	\node at (2.5, -0.5) {$\PP^{1}$};
	\node at (6.7,1) {$\PP^{1}$};
	\node at (0, 3) {$M_{0} = \Mzv^{3}$};

	\draw[->][line width=1pt] (3.5, 5) -- (3.5, 4);

	\draw[line width=1pt][lightgray] (0,6) -- (2,8);
	\draw[line width=1pt][lightgray] (2,8) -- (2,11);
	\draw[line width=1pt][lightgray] (2,8) -- (7,8);
	\draw[line width=1pt] (0,6) -- (5,6);
	\draw[line width=1pt] (5,6) -- (7,8);
	\draw[line width=1pt] (7,8) -- (7,11);
	\draw[line width=1pt] (5,6) -- (5,9);
	\draw[line width=1pt] (5,6) -- (0,6);
	\draw[line width=1pt] (2,11) -- (7,11);
	\draw[line width=1pt] (7,11) -- (5,9);
	\draw[line width=1pt] (5,9) -- (0,9);
	\draw[line width=1pt] (0,9) -- (0,6);
	\draw[line width=1pt] (0,9) -- (2,11);
	\node at (3, 8) {$X_{1}$};
	\node at (-1, 12) {$M_{1}$};
	\node at (7.5, 9.5) {$\PP^{1}$};
	\node[gray] at (3.5, 7) {$Y_{21}$};
	\node at (3.5, 10) {$Y_{11}$};

	\draw[->][line width=1pt] (6,15) -- (5,14);
	
	\draw[line width=1pt][lightgray] (15,21) -- (10,21);
	\draw[line width=1pt][lightgray] (10,21) -- (8,19);
	\draw[line width=1pt][lightgray] (7.5, 13) -- (9.5, 15);
	\draw[line width=1pt][lightgray] (9.5,15) -- (14.5, 15);
	\draw[line width=1pt][lightgray] (9.5,15) -- (10, 18);
	\draw[line width=1pt][lightgray] (10,18) -- (15,18);
	\draw[line width=1pt][lightgray] (10,18) -- (8,16);
	\draw[line width=1pt][lightgray] (10,18) -- (10,21);
	\draw[line width=1pt][lightgray] (9.5,24) -- (10,21);
	\draw[line width=1pt] (8, 16) -- (13, 16);
	\draw[line width=1pt] (13, 16) -- (15, 18);
	\draw[line width=1pt] (15,18) -- (15, 21);
	\draw[line width=1pt] (8,19) -- (8,16);
	\draw[line width=1pt] (8,19) -- (13,19);
	\draw[line width=1pt] (13,19) -- (13, 16);
	\draw[line width=1pt] (13,19) -- (15, 21);
	\draw[line width=1pt] (8, 16) -- (7.5, 13);
	\draw[line width=1pt] (7.5, 13) -- (12.5, 13);
	\draw[line width=1pt] (12.5, 13) -- (13, 16);
	\draw[line width=1pt] (12.5, 13) -- (14.5, 15);
	\draw[line width=1pt] (14.5, 15) -- (15, 18);
	\draw[line width=1pt] (8,19) -- (7.5,22);
	\draw[line width=1pt] (13,19) -- (12.5, 22);
	\draw[line width=1pt] (7.5,22) -- (12.5, 22);
	\draw[line width=1pt] (12.5, 22) -- (14.5, 24);
	\draw[line width=1pt] (14.5,24) -- (15,21);
	\draw[line width=1pt] (14.5,24) -- (9.5,24);
	\draw[line width=1pt] (9.5,24) -- (7.5,22);
	\node at (6, 23) {$M_{2}$};
	\node at (11.5, 18) {$X_{2}$};
	\node at (11, 15) {$Y_{22}$};
	\node at (11, 21) {$Y_{21}$};
	\node at (11, 23) {$Y_{21} \cap B_{2}$};
	\node[gray] at (11, 14) {$Y_{22} \cap B_{2}$};
	\node at (9, 21.5) {$\mathbb{F}_{3}$};
	\node at (14, 15.5) {$\mathbb{F}_{3}$};
	
	\draw[->][line width=1pt] (15, 15) -- (16,14);
	
	\draw[line width=1pt][lightgray] (22, 11.5) -- (17,13);
	\draw[line width=1pt] (17, 13) -- (22, 13);
	\draw[line width=1pt] (17, 13) -- (16.5, 16);
	\draw[line width=1pt] (16.5, 16) -- (21.5, 16);
	\draw[line width=1pt] (21.5, 16) -- (22, 13);
	\draw[line width=1pt] (17, 13) -- (18.5, 9.5);
	\draw[line width=1pt] (22,13) -- (18.5, 9.5);
	\draw[line width=1pt] (18.5,9.5) -- (22, 11.5);
	\draw[line width=1pt] (22, 11.5) -- (22,13);
	\draw[line width=1pt] (22, 11.5) -- (21.5, 8.5);
	\draw[line width=1pt] (21.5, 8.5) -- (18, 6.5);
	\draw[line width=1pt] (18, 6.5) -- (18.5, 9.5);
	\node at (19.8, 12) {$X_{3} = \PP^{3}$};
	\node at (19.5, 14.5) {$Y_{31} = \mathbb{F}_{3}$};
	\node at (20, 9) {$Y_{32} = \mathbb{F}_{3}$};
	\node at (16.5, 17) {$M_{3}$};
	\node at (19, 16.5) {$Y_{31} \cap B_{2}$};
	\node at (21, 7) {$Y_{32} \cap B_{2}$};
	
	\draw[->][line width=1pt] (19, 6) -- (19, 5);
	
	\draw[line width=1pt] (16.5, 3) -- (21.5, 3);
	\draw[line width=1pt] (16.5, 3) -- (18.5, -0.5);
	\draw[line width=1pt] (18.5,-0.5) -- (21.5,3);
	\draw[line width=1pt] (18.5,-0.5) -- (17, -4.5);
	\draw[line width=1pt] (18.5,-0.5) -- (20.5, -2.2);
	\draw[line width=1pt] (20.5,-2.2) -- (17,-4.5);
	\fill (18.5, -0.5) circle(0.2);
	\node at (19.7, -0.5) {$X_{4}$};
	\node at (16, 4) {$M_{4} = \Mzv^{2}$};
	\node at (18.5, 1.5) {$Y_{41}$};
	\node at (19, -2) {$Y_{42}$};
	\node at (19, 3.5) {$Y_{41} \cap B_{2}$};
	\node at (20, -3.7) {$Y_{42} \cap B_{2}$};
\end{tikzpicture}
\caption{Decomposition of the second flip $\Mzv^{3} \dashrightarrow \Mzv^{2}$}\label{fig:secondflip}
\end{figure}

On $\Mzv^{3}$, let $X_{0}$ be an irreducible component of $B_{2}^{2}$. Then $X_{0}$ is isomorphic to $\PP^{1} \times \PP^{1}$ and its normal bundle $N_{X_{0}/\Mzv^{3}}$ is isomorphic to $\cO(-2,-1)\oplus \cO(-1,-2)$ (Lemma \ref{lem:descriptionofX}). Note that on $\Mzv^{3}$, since we have blown-up $B_{2}^{3}$, $X_{0}$ is the intersection of exactly two irreducible components of $B_{2}$ and no other irreducible components of $B_{2}$ intersects $X_{0}$. From the computation of the normal bundle, the direct summands $\cO(-2,-1)$ and $\cO(-1,-2)$ correspond to the normal bundle to two irreducible components of $B_{2}$ containing $X_{0}$.

Take the blow-up $M_{1}$ of $M_{0} := \Mzv^{3}$ along $X_{0}$. Then the exceptional divisor $X_{1}$ is isomorphic to $\PP(\cO(-2,-1) \oplus \cO(-1,-2))$. It has two sections $Y_{11}$ and $Y_{12}$, which are intersections with the proper transform of irreducible components of $B_{2}$. The normal bundle $N_{Y_{11}/M_{1}}$ is isomorphic to $\cO(-2,-1) \oplus \cO(1,-1)$ and $N_{Y_{12}/M_{1}} \cong \cO(-1,-2) \oplus \cO(-1,1)$ (Lemma \ref{lem:normalbdlYoi}).

Let $M_{2}$ be the blow-up of $M_{1}$ along $Y_{11} \sqcup Y_{12}$. Let $Y_{21}$ (resp. $Y_{22}$) be the exceptional divisor over $Y_{11}$ (resp. $Y_{12}$). Finally, let $X_{2}$ be the proper transform of $X_{1}$. Since $X_{2}$ is a blow-up of two Cartier divisors $Y_{11}, Y_{12} \subset X_{1}$, $X_{2}$ is isomorphic to $X_{1}$. On the other hand, $Y_{21} \cong \PP(\cO(-2,-1)\oplus \cO(1,-1))$ and $Y_{22} \cong \PP(\cO(-1,-2)\oplus \cO(-1,1))$.

If we fix the first coordinate on $Y_{11}$, then the restriction of $N_{Y_{11}/M_{1}}$ is $\cO(-1) \oplus \cO(-1)$. So its projectivization is $\PP^{1} \times \PP^{1}$. This implies that $Y_{21}$ has another $\PP^{1}$ fibration structure which does not come from $Y_{21} \to Y_{11}$. Moreover, if we restrict $\cO_{Y_{21}}(Y_{21})$ to a fiber, it is isomorphic to $\cO_{\PP^{1}}(-1)$. Therefore we can blow-down this $\PP^{1}$ fibration and the result is smooth. $Y_{22}$ can be contracted in the same way. (But note that the direction of fibrations are different.) Let $M_{3}$ be the blow-down of $Y_{21}$ and $Y_{22}$, and let $Y_{31}$ (resp. $Y_{32}$, $X_{3}$) be the image of $Y_{21}$ (resp. $Y_{22}$, $X_{2}$). Then $Y_{31}$, $Y_{32}$ are isomorphic to $\mathbb{F}_{3}$ and $X_{3}$ is isomorphic to $\PP^{3}$ and $N_{X_{3}/M_{3}} \cong \cO(-3)$ (Lemma \ref{lem:descriptionofXthree}). 

Finally, $X_{3}$ can be contracted to a point $X_{4}$ in the category of algebraic spaces (\cite[Corollary 6.10]{Art70}). Let $M_{4}$ be the contraction. $X_{4}$ is a singular point of $M_{4}$. The image $Y_{41}$ (resp. $Y_{42}$) of $Y_{31} \cong \mathbb{F}_{3}$ (resp. $Y_{32}$) is the contraction of $(-3)$ section, hence it is covered by a single family of rational curves passing through the singular point. Let $\Mzv^{2} := M_{4}$. 

We claim that $\Mzv^{2}$ is the second flip. The argument is standard. There is a small contraction $\phi_{2} : \Mzv^{3} \to \Mzv(\psi - 3K_{\Mzv})$ (Lemma \ref{lem:flippingbase}). For two modifications $\pi_{2} : M_{2} \to \Mzv^{3}$ and $\pi_{2}' : \Mzv^{3} \to \Mzv^{2}$, by rigidity lemma, there is a morphism $\phi_{2}' : \Mzv^{2} \to \Mzv(\psi-3K_{\Mzv})$ such that $\phi_{2} \circ \pi_{2} = \phi_{2}' \circ \pi_{2}'$. We prove that for $D \in (\psi-5K_{\Mzv}, \psi-3K_{\Mzv})$, $D$ is ample on $\Mzv^{2}$ (Lemma \ref{lem:amplenessofD}). Note that it implies the projectivity of $\Mzv^{2}$. In summary, we obtain following result. 

\begin{proposition}\label{prop:secondflip}
The modification $\Mzv^{2}$ is $D$-flip of $\Mzv^{3}$ for $D \in (\psi-5K_{\Mzv}, \psi - K_{\Mzv})$. 
\end{proposition}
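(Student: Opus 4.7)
My plan is to establish the two defining properties of a flip for the diagram
\[
\xymatrix{& M_{2} \ar[ld]_{\pi_{2}} \ar[rd]^{\pi_{2}'}\\ \Mzv^{3}\ar[rd]_{\phi_{2}} && \Mzv^{2} \ar[ld]^{\phi_{2}'}\\ & \Mzv(\psi - 3K_{\Mzv}),}
\]
namely (a) the existence of a small contraction $\phi_{2}'$ making the square commute, and (b) the opposite relative ampleness statements: $-D$ is $\phi_{2}$-ample while $D$ is $\phi_{2}'$-ample for $D$ in the asserted range. The small contraction $\phi_{2}$ and its exceptional locus (the proper transform of $B_{2}^{2}$) are provided by Lemma \ref{lem:flippingbase}. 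To obtain $\phi_{2}'$, I would apply the rigidity lemma (\cite[Proposition II.5.3]{Kol96}) to $\phi_{2}\circ\pi_{2}$ with respect to the contraction $\pi_{2}':M_{2}\to\Mzv^{2}$. This works because the entire chain $M_{0}\leftarrow M_{1}\leftarrow M_{2}\to M_{3}\to M_{4}$ is supported over the flipping locus, and each irreducible component of $B_{2}^{2}$ is collapsed to a single point by $\phi_{2}$.

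For the sign on the $\Mzv^{3}$ side, intersecting with the curve class $A$ of Remark \ref{rem:curveA}, which covers a dense open subset of $B_{2}^{2}$, Corollary \ref{cor:intersection} gives $A\cdot D<0$ for every $D\in(\psi-5K_{\Mzv},\psi-3K_{\Mzv})$, so $-D$ is $\phi_{2}$-ample. For the sign on the $\Mzv^{2}$ side, the exceptional locus of $\phi_{2}'$ is $Y_{41}\cup Y_{42}$, each obtained by contracting the $(-3)$-section of an $\mathbb{F}_{3}$ to the singular point $X_{4}$. A general covering ruling in $Y_{4i}$ lifts to a curve class on $M_{2}$ whose intersection with $\pi_{2}'^{*}D$ I would compute using the explicit normal bundle data $N_{X_{0}/\Mzv^{3}}\cong\cO(-2,-1)\oplus\cO(-1,-2)$ and $N_{Y_{1i}/M_{1}}$ recorded in the construction, together with the fact that $B_{3}$ does not pass through $X_{0}$ while $B_{2}$ restricts to the two ruling classes. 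This intersection number computation is precisely the content of Lemma \ref{lem:amplenessofD}.

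The hard part will be Lemma \ref{lem:amplenessofD} itself: it must deliver not just $\phi_{2}'$-relative ampleness but \emph{absolute} ampleness of $D$ on $\Mzv^{2}$, which simultaneously promotes the algebraic space $M_{4}$ produced by the singular blow-down (via \cite[Corollary 6.10]{Art70}) to a projective variety and identifies it with $\Mzv(D)$. My strategy is to express $\pi_{2}'^{*}D$ as an effective combination of the proper transforms of $B_{2}$, $B_{3}$ and the exceptional divisors $X_{1}$, $Y_{21}$, $Y_{22}$, verify that the numerical class descends under $M_{2}\to M_{3}\to M_{4}$ (using that the classes contracted along $M_{2}\to M_{3}$ pair trivially with the coefficient pattern forced by the relative ampleness just computed), and check positivity on every curve class on $\Mzv^{2}$: curves avoiding $X_{4}$ inherit positivity from the nefness of $D$ on $\Mzv^{3}$ outside $B_{2}^{2}$, curves covering $Y_{41}\cup Y_{42}$ are handled by the computation of paragraph two, and all remaining classes are generated by these by Kleiman's criterion on the $\QQ$-factorial locus. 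Finally, the identification $\Mzv(D)\cong\Mzv^{2}$ follows since the two section rings agree away from a codimension-two set and $D$ is ample on $\Mzv^{2}$.
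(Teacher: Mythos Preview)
Your overall architecture matches the paper's exactly: rigidity lemma to produce $\phi_{2}'$, negativity of $D$ on the $A$-class for the $\phi_{2}$ side, and deferral of the $\phi_{2}'$-ampleness (indeed absolute ampleness) to Lemma~\ref{lem:amplenessofD}. So the high-level strategy is correct and essentially identical to the paper.

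However, your sketched mechanism for the intersection computation contains a factual error that would derail the calculation. You write that ``$B_{3}$ does not pass through $X_{0}$''. In fact seven irreducible components of $B_{3}$ meet $X_{0}$: if $X_{0} = B_{I}\cap B_{J}$ with $I=\{1,2\}$, $J=\{3,4\}$, then $B_{K}$ with $K$ any of $\{1,2,5\},\{1,2,6\},\{1,2,7\},\{3,4,5\},\{3,4,6\},\{3,4,7\},\{5,6,7\}$ intersects $X_{0}$. Three of these restrict to the class $h_{1}$ on $X_{0}\cong\PP^{1}\times\PP^{1}$, three to $h_{2}$, and the last to the diagonal $h_{1}+h_{2}$. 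Because of this, the computation of $B_{3}\cdot\ell$ is the genuinely delicate part: one must track the proper transforms of all seven components through $M_{1}\to M_{2}\to M_{3}\to M_{4}$, determine the multiplicity of $X_{3}$ in $\pi^{*}\pi_{*}B_{3i}$ using that $X_{3}\cap B_{3i}$ is a plane (or a quadric for the diagonal one) inside $X_{3}\cong\PP^{3}$ with $N_{X_{3}/M_{3}}\cong\cO(-3)$, and then use projection formula along $M_{2}\to M_{3}$. The paper obtains $B_{3}\cdot\ell=-\tfrac{4}{3}$ and $B_{2}\cdot\ell=1$, giving $D\cdot\ell=\tfrac{a-3}{3}$ for $D=\psi-aK_{\Mzv}$, which is what pins down the wall at $a=3$. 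Under your assumption $B_{3}\cdot\ell$ would vanish and you would never see that wall. Fix this and your plan goes through.
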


Now we show lemmas we mentioned in the outline.

\begin{lemma}\label{lem:descriptionofX}
\begin{enumerate}
	\item On $\Mzv^{3}$, $X_{0} \cong \PP^{1} \times \PP^{1}$.
	\item The normal bundle $N_{X_{0}/\Mzv^{3}}$ is isomorphic to 
	$\cO(-2,-1)\oplus \cO(-1,-2)$. 
\end{enumerate}
\end{lemma}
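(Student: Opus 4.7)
The plan is to work on the intermediate blow-up $\widetilde{\mathrm{M}}_{0,7}^{3}$ from the proof of Proposition \ref{prop:firstflip}, identify $X_{0}$ as the image of the proper transform $\widetilde{S}$ of the surface $S := B_{I}\cap B_{J} \subset \Mzv$, and transfer the normal bundle computation to $X_{0}$ via the projection formula. Writing $I = \{i_{1},i_{2}\}$ and $J = \{j_{1},j_{2}\}$ for the two disjoint pairs and $\{k_{1},k_{2},k_{3}\}$ for the complementary set, the surface $S$ parameterizes curves with three components carrying marks $\{i_{1},i_{2}\}$, $\{k_{1},k_{2},k_{3},n_{I},n_{J}\}$, $\{j_{1},j_{2}\}$ respectively, hence $S \cong \Mzf$ (with marked set $\{k_{1},k_{2},k_{3},n_{I},n_{J}\}$). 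The three F-curves of type $F_{1,2,2,2}$ contained in $S$ are the boundary divisors $L_{ab} := B_{\{k_{a},k_{b}\}}$ of this $\Mzf$; since any two of the index pairs $\{k_{a},k_{b}\}$ share an element, the $L_{ab}$ form a triple of pairwise disjoint $(-1)$-curves.

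For part (1), the strict transform $\widetilde{S}$ in $\widetilde{\mathrm{M}}_{0,7}^{3}$ is isomorphic to $S$ (blowing up smooth divisors in a smooth surface is an isomorphism). Using the canonical decomposition $N_{F/\Mzv} \cong \cO(B_{I})|_{F}\oplus \cO(B_{J})|_{F}\oplus \cO(B_{K})|_{F}$ for each F-curve $F = B_{I}\cap B_{J}\cap B_{K}$, together with the exact sequence $0 \to N_{F/S} \to N_{F/\Mzv} \to N_{S/\Mzv}|_{F}\to 0$, one identifies $N_{F/S}$ with the summand $\cO(B_{K})|_{F}$. Hence $\widetilde{S}\cap E_{F}$ is a constant section of $E_{F} = \PP^{1}\times\PP^{2} \to \PP^{2}$ in the $\PP^{2}$-factor, so $\pi'$ collapses each $L_{ab}\subset \widetilde{S}$ to a single point. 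Thus $X_{0}$ is $\Mzf$ with $L_{12}, L_{13}, L_{23}$ contracted, and this contraction is realized intrinsically by the product of forgetful morphisms
\[
	(f_{I},f_{J}) : \Mzf \longrightarrow \overline{\mathrm{M}}_{0,\{k_{1},k_{2},k_{3},n_{I}\}} \times \overline{\mathrm{M}}_{0,\{k_{1},k_{2},k_{3},n_{J}\}} \cong \PP^{1} \times \PP^{1},
\]
since each $B_{\{k_{a},k_{b}\}}$ maps to a single point in each $\overline{\mathrm{M}}_{0,4}$ factor. This identifies $X_{0} \cong \PP^{1}\times \PP^{1}$.

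For part (2), the splitting $N_{X_{0}/\Mzv^{3}} = \cO(B_{I})|_{X_{0}}\oplus \cO(B_{J})|_{X_{0}}$ (taking strict transforms on $\Mzv^{3}$) reduces the claim to computing the two restricted line bundles. The standard node-smoothing formula yields $\cO(B_{I})|_{B_{I}} = -\psi_{n_{I}}$ on $B_{I}\cong \Mzs$ (the $\overline{\mathrm{M}}_{0,3}$ factor contributes nothing), and this restricts further to $-\psi_{n_{I}}$ on $S$. Combining the strict transform identity $\widetilde{B}_{I} = \pi^{*}B_{I} - \sum_{F \subset B_{I}}E_{F}$ with $E_{F} \cap \widetilde{S} = F$ (as divisors on $\widetilde{S}$), one obtains
\[
	\widetilde{B}_{I}|_{\widetilde{S}} \,=\, -\psi_{n_{I}} - L_{12} - L_{13} - L_{23}
\]
on $\widetilde{S}\cong\Mzf$. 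Testing $(\pi')^{*}B_{I}^{(3)}$ on a $\PP^{1}$-fiber of $E_{F}\to \PP^{2}$ shows no exceptional correction: since $\widetilde{B}_{I}|_{E_{F}}$ has class $\cO(0,1)$ on $\PP^{1}\times\PP^{2}$, it has zero degree on the contracted $\PP^{1}$, so $(\pi')^{*}B_{I}^{(3)} = \widetilde{B}_{I}$. The projection formula now reduces the intersection computations on $X_{0}$ to ones on $\widetilde{S}$.

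To extract $\cO(B_{I})|_{X_{0}}$ as a class $\cO(a,b)$ on $X_{0} \cong \PP^{1}\times\PP^{1}$, I intersect with representative rulings, namely the images of $B_{\{n_{I},k_{1}\}}$ and $B_{\{n_{J},k_{1}\}}$. These lie in distinct rulings because they are disjoint on $\Mzf$ but meet at a single point on $X_{0}$ (the image of the common F-curve $L_{23}$). Using that $\psi_{i}|_{B_{K}}$ is the psi class on the side containing $i$ (which vanishes on a $\overline{\mathrm{M}}_{0,3}$ side and has degree $1$ on an $\overline{\mathrm{M}}_{0,4}$ side), together with the intersection rule that two size-$2$ boundary divisors of $\Mzf$ meet at one point iff their index sets are disjoint, one computes $\widetilde{B}_{I}|_{\widetilde{S}}\cdot B_{\{n_{I},k_{1}\}} = -1$ and $\widetilde{B}_{I}|_{\widetilde{S}}\cdot B_{\{n_{J},k_{1}\}} = -2$. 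Hence $\cO(B_{I})|_{X_{0}} \cong \cO(-1,-2)$, and by the $I\leftrightarrow J$ symmetry $\cO(B_{J})|_{X_{0}} \cong \cO(-2,-1)$, yielding the lemma. The main technical hurdle is the identity $(\pi')^{*}B_{I}^{(3)} = \widetilde{B}_{I}$; without it intersection numbers on $\widetilde{S}$ would not transfer cleanly to $X_{0}$.
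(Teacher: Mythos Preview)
Your argument is correct and follows the same overall strategy as the paper: identify the component of $B_{2}^{2}$ on $\Mzv$ with $\Mzf$, recognize the three F-curves of type $F_{1,2,2,2}$ inside it as three disjoint $(-1)$-curves whose contraction yields $\PP^{1}\times\PP^{1}$, and then transport the normal bundle computation (which starts as $-\psi_{n_{I}}\oplus -\psi_{n_{J}}$) through the blow-up and blow-down.

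The execution differs in two places worth noting. For part (1), the paper realizes $\Mzf$ as the universal curve over $\overline{\mathrm{M}}_{0,4}$ and invokes the classical description as a blow-up of $\PP^{1}\times\PP^{1}$ at three diagonal points; your identification via the product of forgetful maps $(f_{I},f_{J})$ is a cleaner and more intrinsic way to name the same contraction. For part (2), the paper packages the passage from $\Mzv$ to $\widetilde{\mathrm{M}}_{0,7}^{3}$ in one step by citing Fulton's blow-up formula for normal bundles \cite[B.6.10]{Ful98}, obtaining $N_{\widetilde{X}/\widetilde{\mathrm{M}}_{0,7}^{3}} \cong \pi^{*}N_{\Mzf/\Mzv}\otimes\cO(-E_{i}-E_{j}-E_{k})$, and then finishes with a transversality remark. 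You instead split the normal bundle as $\cO(B_{I})|_{X_{0}}\oplus\cO(B_{J})|_{X_{0}}$ and compute each summand by explicit intersection numbers against ruling classes, checking the pullback identity $(\pi_{3}')^{*}B_{I}^{(3)}=\widetilde{B}_{I}$ by hand. This is the same content unwound: your approach is more elementary and self-contained (no external citation needed), while the paper's is terser. Both reach the same conclusion, and your intersection computations (e.g.\ $-\psi_{n_{I}}\cdot B_{\{n_{I},k_{1}\}}=0$, $L_{23}\cdot B_{\{n_{I},k_{1}\}}=1$) check out.
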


\begin{proof}
Take an irreducible component of $B_{2}^{2}$ on $\Mzv$, which is isomorphic to $\Mzf$. Let $p$, $q$ be two attaching points. One can also regard $\Mzf$ as a universal family over $\overline{\mathrm{M}}_{0,4} \cong \PP^{1}$ which is also isomorphic to blow-up of $\PP^{1} \times \PP^{1}$ along three diagonal points. Its four sections correspond to 4 marked points for $\Mzf$. Then there are four sections (say $i, j, k$ and $p$) such that three of them are proper transforms of trivial sections and one of them is the proper transform of the diagonal section. We may assume that $p$ is the diagonal section. The normal bundle $N_{\Mzf/\Mzv} \cong \cO(-\psi_{p}) \oplus \cO(-\psi_{q})$. By intersection number computation, one can show that $N_{\Mzf/\Mzv} \cong \pi^{*}(\cO(-2, -1)\oplus \cO(-1, -2)) \otimes \cO(E_{i}+E_{j}+E_{k})$ where $\pi : \Mzf \to \PP^{1} \times \PP^{1}$ is the blow-up along three intersection points of the diagonal section and $E_{i}, E_{j}, E_{k}$ are three exceptional divisors. On $\Mzv$, these three exceptional curves are three components of $B_{2}^{3}$. 

On $\Mzv^{3}$, $X_{0}$ is the blow-up of $\Mzf$ along three divisors and contraction along the different direction. Thus $X_{0}$ is the contraction of three exceptional lines $E_{i}$, $E_{j}$, and $E_{k}$ and it is isomorphic to $\PP^{1} \times \PP^{1}$. This proves (1). 

We denote the proper transform of $X_{0}$ in $\widetilde{\mathrm{M}}_{0,7}^{3}$ by $\widetilde{X}$. Let $\pi_{1} : \widetilde{X} \to \Mzf$, $\pi_{2} : \widetilde{X} \to X_{0}$ be two contractions. (Since $B_{2}^{3} \subset X_{0}$ is a divisor, $\pi_{1}$ is an isomorphism.) Then by the blow-up formula of normal bundles \cite[App. B.6.10.]{Ful98}, $N_{\widetilde{X}/\widetilde{\mathrm{M}}_{0,7}^{3}} \cong \pi_{1}^{*}N_{\Mzf/\Mzv} \otimes \cO(-E_{i}-E_{j}-E_{k}) \cong \pi_{1}^{*}\pi^{*}(\cO(-2,-1) \oplus \cO(-1, -2)) = \pi_{2}^{*}(\cO(-2,-1) \oplus \cO(-1, -2))$. Since the opposite blow-up center is transversal to $X$, $N_{X/\Mzv^{3}} \cong \cO(-2,-1) \oplus \cO(-1,-2)$. 
\end{proof}

\begin{lemma}\label{lem:normalbdlYoi}
The normal bundle $N_{Y_{11}/M_{1}}$ is isomorphic to $\cO(-2,-1) \oplus \cO(1,-1)$. Similarly, $N_{Y_{12}/M_{1}} \cong \cO(-1,-2) \oplus \cO(-1,1)$.
\end{lemma}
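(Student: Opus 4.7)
The plan is to realize $Y_{11}$ as the transverse intersection of two smooth divisors in $M_1$ and compute $N_{Y_{11}/M_1}$ as a direct sum of two line bundles coming from the projective bundle structure $X_1 \to X_0$. Under the standard identification of the exceptional divisor, $X_1 = \PP(N_{X_0/M_0}) = \PP(L_1 \oplus L_2)$ is a $\PP^1$-bundle over $X_0 \cong \PP^1 \times \PP^1$, where by Lemma \ref{lem:descriptionofX} the two direct summands $L_1 = \cO(-2,-1)$ and $L_2 = \cO(-1,-2)$ are identified with the restrictions to $X_0$ of the normal bundles of the two irreducible components of $B_2$ containing $X_0$. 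Correspondingly, $X_1 \to X_0$ has two canonical sections $\PP(L_1)$ and $\PP(L_2)$, and these are exactly the intersections $\widetilde{B} \cap X_1$ as $\widetilde{B}$ ranges over the two proper transforms in question; up to a symmetric relabeling of the summands, $Y_{11} = \PP(L_1)$ and $Y_{12} = \PP(L_2)$. A local coordinate check on $M_0$ with $X_0 = \{u = v = 0\}$ and the two components of $B_2$ cut out by $\{u = 0\}$ and $\{v = 0\}$ both verifies this identification and shows that $\widetilde{B}$ and $X_1$ meet transversely along $Y_{11}$, yielding the direct-sum decomposition
\[
N_{Y_{11}/M_1} = N_{X_1/M_1}|_{Y_{11}} \oplus N_{Y_{11}/X_1}.
\]

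The first summand is computed via the standard identification $\cO_{M_1}(X_1)|_{X_1} = \cO_{\PP(N_{X_0/M_0})}(-1)$, the tautological sub line bundle on the projectivization; its restriction to the canonical section $\PP(L_1)$ tautologically equals $L_1 = \cO(-2,-1)$. The second summand is the vertical normal bundle of the canonical section $\PP(L_1) \hookrightarrow \PP(L_1 \oplus L_2)$, which by the standard computation of the tangent space to a fibre $\PP^1$ at the point $[L_1]$ equals
\[
N_{Y_{11}/X_1} = \mathrm{Hom}(L_1, L_2) = L_1^{-1} \otimes L_2 = \cO(2,1) \otimes \cO(-1,-2) = \cO(1,-1).
\]
Combining these gives $N_{Y_{11}/M_1} = \cO(-2,-1) \oplus \cO(1,-1)$, as claimed.

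The formula for $N_{Y_{12}/M_1}$ follows by the symmetric argument with the roles of $L_1$ and $L_2$ interchanged. The main delicate point is the bookkeeping needed to match the two canonical sections of $X_1 \to X_0$ with the labels $Y_{11}$ and $Y_{12}$ --- this is settled by the local coordinate model --- together with the convention identifying the normal bundle of the exceptional divisor with the tautological sub line bundle on $\PP(N_{X_0/M_0})$; beyond these conventions the computation is routine.
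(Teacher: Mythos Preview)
Your proof is correct and computes the two line-bundle summands exactly as the paper does: $N_{X_1/M_1}|_{Y_{11}} \cong L_1 = \cO(-2,-1)$ via the tautological identification, and $N_{Y_{11}/X_1} \cong L_1^{-1}\otimes L_2 = \cO(1,-1)$ for the section $\PP(L_1)$ in $\PP(L_1\oplus L_2)$. The one genuine difference is how the decomposition into these two pieces is obtained. The paper writes down the normal bundle sequence
\[
0 \to N_{Y_{11}/X_1} \to N_{Y_{11}/M_1} \to N_{X_1/M_1}|_{Y_{11}} \to 0
\]
and then checks that it splits by the cohomological vanishing $\mathrm{Ext}^1(\cO(-2,-1),\cO(1,-1)) \cong H^1(\cO(3,0)) = 0$. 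You instead use the geometric fact that $Y_{11}$ is the transverse intersection of the exceptional divisor $X_1$ with the proper transform $\widetilde{B}$ of a component of $B_2$, which gives the direct-sum splitting for free and simultaneously pins down the labeling of the two sections. Your route is a bit more conceptual and avoids the Ext computation; the paper's route is shorter to state since it does not require verifying transversality. Either way the ingredients and the answer are the same.
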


\begin{proof}
For a section $Y_{11} = \PP(\cO(-2,-1)) \subset \PP(\cO(-2,-1)\oplus \cO(-1,-2)) = X_{1}$, the normal bundle $N_{X_{1}/M_{1}}|_{Y_{11}} \cong \cO(-2,-1)$ and $N_{Y_{11}/X_{1}} \cong \cO(-1,-2) \otimes \cO(-2,-1)^{*} \cong \cO(1,-1)$. From the normal bundle sequence 
\[
	0 \to N_{Y_{11}/X_{1}} \to N_{Y_{11}/M_{1}} \to N_{X_{1}/M_{1}}|_{Y_{11}} \to 0, 
\]
$N_{Y_{11}/M_{1}}$ is an extension of $N_{X_{1}/M_{1}}|_{Y_{11}}$ by $N_{Y_{11}/X_{1}}$. But $\mathrm{Ext}^{1}(\cO(-2,-1), \cO(1,-1)) \cong H^{1}(\cO(3, 0)) = 0$. Therefore $N_{Y_{11}/M_{1}} \cong \cO(-2,-1) \oplus \cO(1,-1)$. The computation of $N_{Y_{12}/M_{1}}$ is similar.
\end{proof}

\begin{lemma}\label{lem:descriptionofXthree}
\begin{enumerate}
	\item $Y_{31} \cong Y_{32} \cong \mathbb{F}_{3}$.
	\item $X_{3} \cong \PP^{3}$.
	\item $N_{X_{3}/M_{3}} \cong \cO(-3)$.
\end{enumerate}
\end{lemma}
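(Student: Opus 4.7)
For part (1), the strategy is to identify $Y_{21}$ explicitly as a product using the projectivization tensor trick. Since projectivization is invariant under tensoring with a line bundle, Lemma \ref{lem:normalbdlYoi} gives
\[
Y_{21} = \PP(\cO(-2,-1)\oplus\cO(1,-1)) \cong \PP(\cO\oplus\cO(3,0))
\]
over $Y_{11}\cong\PP^{1}_{s}\times\PP^{1}_{t}$. As $\cO\oplus\cO(3,0)=\pi_{s}^{\ast}(\cO\oplus\cO(3))$ is pulled back from the first factor, we obtain $Y_{21}\cong\mathbb{F}_{3}\times\PP^{1}_{t}$, with the original fibration $Y_{21}\to Y_{11}$ corresponding to $(\mathbb{F}_{3}\to\PP^{1}_{s})\times\mathrm{id}$ and the alternative $\PP^{1}$-fibration being the product projection onto $\mathbb{F}_{3}$. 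Hence $Y_{31}\cong\mathbb{F}_{3}$, and $Y_{32}$ is handled symmetrically.

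For part (2), I will show $X_{3}$ is a smooth Fano threefold with Picard rank one and Fano index four, which forces $X_{3}\cong\PP^{3}$ by the Kobayashi--Ochiai classification. Smoothness is built into the construction. For the Picard rank, writing $X_{2}\cong X_{1}=\PP(E)$ with $E=\cO(-2,-1)\oplus\cO(-1,-2)$ over $\PP^{1}\times\PP^{1}$ we have $\rho(X_{2})=3$, and the contraction $X_{2}\to X_{3}$ contracts the two exceptional divisors $Y_{11},Y_{12}$ onto curves, dropping Picard rank by two. Combining $K_{X_{2}}=-2\xi+H_{1}+H_{2}$ (in Grothendieck convention), the section-class identity $[Y_{11}]+[Y_{12}]=2\xi-3(H_{1}+H_{2})$, and the blow-up discrepancy $K_{X_{2}}=(\pi|_{X_{2}})^{\ast}K_{X_{3}}+Y_{11}+Y_{12}$ yields $-(\pi|_{X_{2}})^{\ast}K_{X_{3}}=4(\xi-H_{1}-H_{2})$; a Chow-ring computation via the Grothendieck relation $\xi^{2}+c_{1}(E)\xi+c_{2}(E)=0$ confirms $(\xi-H_{1}-H_{2})^{3}=1$, so this class is a primitive ample generator of $\mathrm{Pic}(X_{3})$ and the Fano index is four.

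For part (3), the plan is to pull back $N_{X_{3}/M_{3}}$ to $X_{2}$ and compute directly. Combining the blowdown discrepancy $K_{M_{2}}=\pi^{\ast}K_{M_{3}}+Y_{21}+Y_{22}$ (valid because each $Y_{2i}\to Y_{3i}$ is a smooth $\PP^{1}$-bundle over a surface), adjunction on $X_{2}\subset M_{2}$ and $X_{3}\subset M_{3}$, the identification $Y_{2i}|_{X_{2}}=Y_{1i}$, and the contraction discrepancy $(\pi|_{X_{2}})^{\ast}K_{X_{3}}=K_{X_{2}}-Y_{11}-Y_{12}$, a short manipulation gives $(\pi|_{X_{2}})^{\ast}N_{X_{3}/M_{3}}=N_{X_{2}/M_{2}}$. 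The proper-transform blow-up formula then yields $N_{X_{2}/M_{2}}=N_{X_{1}/M_{1}}(-Y_{11}-Y_{12})=-\xi-(2\xi-3H_{1}-3H_{2})=-3(\xi-H_{1}-H_{2})$; by part (2) this pulls back $-3\cdot\cO_{\PP^{3}}(1)$, so $N_{X_{3}/M_{3}}\cong\cO_{\PP^{3}}(-3)$.

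The main obstacle I anticipate is the careful Chow-ring and normal-bundle bookkeeping across several projective bundle constructions and blow-ups, especially staying consistent with the projectivization convention of Lemma \ref{lem:normalbdlYoi} so that restrictions of the tautological class and the section-class formulas $[\sigma_{L_{i}}]=\xi+\pi^{\ast}c_{1}(L_{j})$ carry the correct signs throughout the discrepancy computations.
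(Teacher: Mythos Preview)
Your argument is correct, but for parts (2) and (3) it takes a genuinely different route from the paper. For (1), both you and the paper identify $Y_{31}$ by restricting the projectivized normal bundle over a slice of $Y_{11}$; your tensor-trick formulation $Y_{21}\cong\mathbb{F}_{3}\times\PP^{1}$ is just a cleaner packaging of the same computation. For (2), the paper proceeds by a direct geometric identification: it observes that the two contracted sections of $X_{2}\cong\PP(\cO(-2,-1)\oplus\cO(-1,-2))$ have normal bundles $\cO(\pm 1,\mp 1)$, and recognizes this as the blow-up of $\PP^{3}$ along two skew lines via the incidence variety of lines meeting both, so $X_{3}\cong\PP^{3}$ immediately. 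Your approach instead runs a Chow-ring and discrepancy computation to extract the Fano index and invokes Kobayashi--Ochiai; this is more systematic and would generalize better, but requires you to check along the way that $\xi-H_{1}-H_{2}$ really descends and is ample on $X_{3}$ (you should note explicitly that it is nef on $X_{2}$ and trivial exactly on the contracted fibers). For (3), the paper simply restricts to a constant section $s$ over the diagonal $\PP^{1}\hookrightarrow\PP^{1}\times\PP^{1}$, notes that $s$ avoids the modification loci and becomes a line in $X_{3}$, and reads off $N_{X_{3}/M_{3}}|_{s}\cong\cO(-3)$ from $N_{X_{1}/M_{1}}|_{s}$. Your adjunction-and-discrepancy derivation of $(\pi|_{X_{2}})^{\ast}N_{X_{3}/M_{3}}=N_{X_{2}/M_{2}}=-3(\xi-H_{1}-H_{2})$ is heavier but entirely self-contained and avoids choosing a test curve. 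Both approaches are valid; the paper's are shorter, yours are more mechanical and less dependent on spotting the right geometric model.
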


\begin{proof}
Since the restriction of $N_{Y_{21}/M_{2}}$ to $\PP^{1} \times \{*\} \subset Y_{11}$ is isomorphic to $\cO(-2)\oplus \cO(1)$, the restriction of $Y_{21}$ onto the inverse image of $\PP^{1} \times \{*\}$ is $\PP(\cO(-2)\oplus \cO(1)) \cong \mathbb{F}_{3}$. Hence $Y_{31}$ is also isomorphic to Hirzebruch surface $\mathbb{F}_{3}$. This proves (1). 

The divisor $X_{2}$ is isomorphic to $\PP(\cO(-2,-1) \oplus \cO(-1,-2))$. Note that two contracted loci $Y_{21} \cap X_{2}$ (resp. $Y_{22} \cap X_{2}$) has normal bundle $\cO(-1,-2)\otimes \cO(-2,-1)^{*} \cong \cO(1,-1)$ (resp. $\cO(-1,1)$). This is isomorphic to the blow-up of $\PP^{3}$ along two lines $L_{1}$ and $L_{2}$ in general position. Indeed, if we consider the universal (or total) space of all lines intersect $L_{1}$ and $L_{2}$, then naturally it is identified to $\mathrm{Bl}_{L_{1} \cup L_{2}}\PP^{3}$. Thus this blown-up space has a $\PP^{1}$-fibration structure over (both of) exceptional divisor isomorphic to $\PP^{1} \times \PP^{1}$. The normal bundles to two exceptional divisors are $\cO(1,-1)$ and $\cO(-1, 1)$ respectively. Thus $X_{2} \cong \mathrm{Bl}_{L_{1} \cup L_{2}}\PP^{3}$ and we have $X_{3} \cong \PP^{3}$.

For a diagonal embedding $\PP^{1} \hookrightarrow \PP^{1} \times \PP^{1} = X_{0}$, if we restrict to $\PP(\cO(-2,-1) \oplus \cO(-1,-2)) \to X_{0}$, we obtain a trivial bundle $\PP(\cO(-3)\oplus \cO(-3)) \to \PP^{1}$. Take a general constant section $s \hookrightarrow \PP(\cO(-3) \oplus \cO(-3))$. Then the restricted normal bundle $N_{X_{1}/M_{1}}|_{s}$ is isomorphic to $\cO_{\PP^{1}}(-3)$. We may choose $s$ which does not intersect $Y_{ij}$ during modifications. Thus $N_{X_{1}/M_{1}}|_{s} = N_{X_{3}/M_{3}}|_{s}$ and $s$ is a line in $X_{3} \cong \PP^{3}$. Hence $N_{X_{3}/M_{3}} \cong \cO(-3)$. 
\end{proof}

\begin{lemma}\label{lem:flippingbase}
For $D = \psi - 3K_{\Mzv}$, there is a small contraction $\phi_{2}: \Mzv^{3}\to \Mzv(D)$ which contracts a connected component of $B_{2}^{2}$ to a point.
\end{lemma}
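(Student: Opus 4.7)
My plan is to produce $\phi_{2}$ as the morphism associated to a base-point-free multiple of $D := \psi - 3K_{\Mzv}$ on $\Mzv^{3}$. By the last paragraph of the proof of Proposition \ref{prop:firstflip}, the symmetric nef cone of $\Mzv^{3}$ is generated by $\psi - K_{\Mzv}$ and $\psi - 3K_{\Mzv}$; hence $D$ is nef and spans the extremal ray opposite to that of $\psi - K_{\Mzv}$. Semi-ampleness of $D$ will then yield the desired nontrivial contraction.

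The crucial intersection computation is $D|_{X_{0}} = 0$ for each connected component $X_{0} \cong \PP^{1} \times \PP^{1}$ of the proper transform of $B_{2}^{2}$. The curve class $A$ of Remark \ref{rem:curveA} --- the one-parameter family in which two $2$-pointed tails and the three other marked points on the central component are fixed and the attachment of one tail varies --- sits inside $X_{0} = B_{I} \cap B_{J}$ on $\Mzv$. Under the identification of $X_{0}$ on $\Mzv$ with $\Mzf \cong \mathrm{Bl}_{3}(\PP^{1} \times \PP^{1})$ from the proof of Lemma \ref{lem:descriptionofX}, a generic member of $A$ avoids the three exceptional $(-1)$-curves making up $B_{2}^{3} \cap X_{0}$, so $A$ descends intact to $\Mzv^{3}$ as a ruling class of $X_{0} \cong \PP^{1} \times \PP^{1}$. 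Table \ref{tbl:intersectionnumbers} gives $A \cdot D = A \cdot \psi - 3\,A \cdot K_{\Mzv} = 3 - 3 = 0$, and the $S_{7}$-involution swapping the two $2$-pointed tails (together with the $S_{7}$-invariance of $D$) forces the same vanishing on the other ruling. Therefore $D|_{X_{0}}$ is numerically, and hence linearly, trivial on $\PP^{1} \times \PP^{1}$.

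To upgrade this to semi-ampleness of $D$ I apply the Zariski--Fujita theorem. The divisor $D$ is nef and big (it lies in the interior of the symmetric effective cone spanned by $B_{2}$ and $B_{3}$), and since the symmetric Mori cone of $\Mzv^{3}$ is two-dimensional with extremal rays spanned by $A$ and by the lines in the $\phi_{3}'$-exceptional $\PP^{2}$'s, every $D$-trivial curve is numerically a multiple of $A$ and so lies in some $X_{0}$; consequently $\mathrm{Null}(D) \subseteq B_{2}^{2}$, on which $D$ restricts to the zero divisor. Zariski--Fujita then gives semi-ampleness of $D$, and the morphism $\phi_{2} = \Phi_{|mD|}$ for $m \gg 0$ is an isomorphism off $B_{2}^{2}$ and contracts each $X_{0} \cong \PP^{1} \times \PP^{1}$ to a single point (both rulings being $D$-trivial). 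Smallness is immediate from $\mathrm{codim}\,X_{0} = 2 < 4 = \dim \Mzv^{3}$. The main technical obstacle is pinning down the inclusion $\mathrm{Null}(D) \subseteq B_{2}^{2}$, which I plan to secure by combining the two-dimensionality of $\mathrm{N}^{1}(\Mzv^{3})^{S_{7}}_{\QQ}$ with the explicit identification of $A$ as a ruling of $X_{0}$ and the $S_{7}$-orbit description of the various components of $B_{2}^{2}$.
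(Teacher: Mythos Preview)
Your identification of the ruling class of $X_{0}$ with the curve $A$ of Remark~\ref{rem:curveA}, and the computation $A\cdot D=0$ giving $D|_{X_{0}}\equiv 0$, are exactly what the paper does. The divergence is in how you pass from this to semi-ampleness, and there your argument has a real gap.

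The step ``every $D$-trivial curve is numerically a multiple of $A$ and so lies in some $X_{0}$'' does not follow from the two-dimensionality of $\mathrm{N}^{1}(\Mzv^{3})_{\QQ}^{S_{7}}$. That two-dimensionality constrains only $S_{7}$-averaged curve classes; an individual irreducible curve $C$ lives in the full $\mathrm{N}_{1}(\Mzv^{3})_{\QQ}$, where $D^{\perp}$ is a hyperplane, not a line. And even if one knew $[C]$ were numerically proportional to $[A]$, that says nothing about the support of $C$: numerical equivalence does not force $C\subset B_{2}^{2}$. Your proposed remedy (combining the symmetric Picard rank with the $S_{7}$-orbit structure of $B_{2}^{2}$) still operates at the level of numerical classes and cannot close this gap. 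Consequently the input you need for Zariski--Fujita, namely $\mathrm{Null}(D)\subseteq B_{2}^{2}$, is not established.

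The paper sidesteps this by appealing directly to the stable base locus computation of Proposition~\ref{prop:stablebaselocus}: on $\Mzv$ one has $\mathbf{B}(\psi-3K_{\Mzv})=B_{2}^{3}\subset B_{2}^{2}$, and since $\Mzv$ and $\Mzv^{3}$ are isomorphic in codimension one their section rings agree, so $mD$ is already base-point-free on $\Mzv^{3}$ outside $B_{2}^{2}$ for $m\gg 0$. Together with $D|_{X_{0}}\sim 0$ (your computation) this gives semi-ampleness with no need for Zariski--Fujita or any identification of $\mathrm{Null}(D)$. Replace your null-locus argument with this input from Proposition~\ref{prop:stablebaselocus} and the proof goes through.
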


\begin{proof}
Since $X_{0}$ is isomorphic to $\PP^{1}\times \PP^{1}$, it is covered by two rational curve classes $\ell_{1} = \PP^{1} \times \{x\}$ and $\ell_{2} = \{y\} \times \PP^{1}$. For a general $x$, $\ell_{1}$ does not intersect the flipping locus of $\Mzv \dashrightarrow \Mzv^{3}$. Moreover, this is a curve class $A$ in Remark \ref{rem:curveA}. So by Corollary \ref{cor:intersection}, $\ell_{1} \cdot D = 0$. By the same reason, $\ell_{2} \cdot D = 0$. Since $\ell_{1}, \ell_{2}$ generates the cone of curves of $\PP^{1} \times \PP^{1}$, $D$ is numerically trivial on $B_{2}^{2}$. Because the only numerically trivial divisor on $B_{2}^{2}$ is a trivial divisor, $D$ does not have any base points on $B_{2}^{2}$. By Proposition \ref{prop:stablebaselocus}, on the outside of $B_{2}^{2}$, there is no base point of $mD$ for $m \gg 0$, too. Thus $D$ is a semi-ample divisor on $\Mzv^{3}$. So there is a regular morphism $\phi_{2}: \Mzv^{3} \to \Mzv^{3}(D) \cong \Mzv(D)$, which contracts $B_{2}^{2}$, a codimension two subvariety to a point. 
\end{proof}

\begin{lemma}\label{lem:amplenessofD}
For $D \in (\psi - 5K_{\Mzv}, \psi - 3K_{\Mzv})$, $D$ is ample on $\Mzv^{2}$. 
\end{lemma}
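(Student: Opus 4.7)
The plan is to verify Kleiman's criterion: $D \cdot C > 0$ for every irreducible curve $C \subset \Mzv^{2}$. Using $K_{\Mzv} = -\tfrac{1}{3}B_{2}$ from Corollary \ref{cor:divisorclasscomputation}, I write $D = (\psi - 3K_{\Mzv}) + \epsilon B_{2}$ with $\epsilon \in (0, 2/3)$, so that $D \cdot C = (\psi-3K_{\Mzv}) \cdot C + \epsilon\,(B_{2}\cdot C)$. The rigidity argument preceding Proposition \ref{prop:secondflip} produces a morphism $\phi_{2}' : \Mzv^{2} \to \Mzv(\psi-3K_{\Mzv})$, and on $\Mzv^{2}$ the class $\psi - 3K_{\Mzv}$ identifies with the pullback of the ample polarization via $\phi_{2}'$. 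Hence $(\psi-3K_{\Mzv}) \cdot C \geq 0$ for every irreducible $C$, with equality exactly when $C$ is contracted by $\phi_{2}'$.

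For a curve $C$ not contracted by $\phi_{2}'$, equivalently $C \not\subset Y_{41} \cup Y_{42}$, we have $(\psi-3K_{\Mzv}) \cdot C > 0$, and a direct case analysis yields $D \cdot C > 0$. Such $C$ corresponds to a curve on $\Mzv^{3}$ not contained in $B_{2}^{2}$, whose numerical class is drawn from the F-curves and $C_{j}$'s of Corollary \ref{cor:intersection} (excluding the flipped classes $F_{1,2,2,2}$ and $A$), or is the new line class $L$ from Proposition \ref{prop:firstflip} (where $B_{2}\cdot L = 3$ and $B_{3}\cdot L = -3$). Writing $D = \psi - aK_{\Mzv}$ with $a \in (3, 5)$ and substituting, each case gives $D \cdot C > 0$; the tightest bound is $D \cdot C_{5} = 5 - a > 0$.

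It remains to handle irreducible curves $C \subset Y_{41} \cup Y_{42}$. Each $Y_{4i}$ is the image of $\mathbb{F}_{3}$ with the $(-3)$-section contracted, so numerically every irreducible curve on $Y_{4i}$ is a positive multiple of the image $\overline{f}$ of a ruling of $\mathbb{F}_{3}$. Since $\overline{f}$ lies in a fiber of $\phi_{2}'$, we have $(\psi-3K_{\Mzv}) \cdot \overline{f} = 0$, and the entire lemma reduces to the single inequality $B_{2} \cdot \overline{f} > 0$. To establish it, I would lift $\overline{f}$ to a section curve $\tilde{f} \subset Y_{21} \subset M_{2}$ of the $\PP^{1}$-bundle $Y_{21} \to Y_{31}$ over $f \subset Y_{31}$, and compute $\pi_{2}^{*}B_{2}\cdot\tilde{f}$ on the smooth variety $M_{2}$ via the projection formula, using the normal bundle data of Lemmas \ref{lem:descriptionofX}--\ref{lem:descriptionofXthree}.

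The main obstacle is precisely this last computation: tracking the coefficients of the exceptional divisors $X_{1}, Y_{21}, Y_{22}$ in $\pi_{2}^{*}B_{2}$, together with the intersection numbers of $\tilde{f}$ against the strict transforms of the various components of $B_{2}$, through the full chain $\Mzv^{3} \leftarrow M_{1} \leftarrow M_{2} \to M_{3} \to \Mzv^{2}$. Once $B_{2}\cdot\overline{f} > 0$ is verified, $D\cdot C > 0$ holds for every irreducible curve in $\Mzv^{2}$, and Kleiman's criterion concludes the proof.
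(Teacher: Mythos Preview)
Your overall architecture is right and close to the paper's: Kleiman's criterion, reduce to curves in $Y_{41}\cup Y_{42}$, and there compute the intersection with the ruling class. Your reduction on $Y_{4i}$ to the single inequality $B_{2}\cdot\bar f>0$ is in fact a simplification over the paper, which computes both $B_{2}\cdot\ell=1$ and $B_{3}\cdot\ell=-\tfrac{4}{3}$ separately; your observation that $(\psi-3K_{\Mzv})\cdot\bar f=0$ makes the $B_{3}$ computation unnecessary. The paper also states that $B_{2}\cdot\ell=1$ is immediate from the description of $M_{4}$, so the ``main obstacle'' you flag is actually the easy half.

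The genuine gap is your treatment of curves $C\not\subset Y_{41}\cup Y_{42}$. You assert that the numerical class of any such $C$ ``is drawn from the F-curves and $C_{j}$'s of Corollary \ref{cor:intersection} \dots\ or is the new line class $L$'', and then check those finitely many classes. There is no reason the Mori cone of $\Mzv^{2}$ (or of $\Mzv^{3}$) is generated by these classes; you have not proved this, and it is not in the paper. Knowing $(\psi-3K_{\Mzv})\cdot C>0$ gives you nothing about $D\cdot C$ without a lower bound on $B_{2}\cdot C$, which your case analysis cannot supply. The paper avoids this entirely: by Proposition \ref{prop:stablebaselocus}, for $D\in[\psi-5K_{\Mzv},\psi-3K_{\Mzv}]$ the stable base locus of $D$ on $\Mzv$ is contained in $B_{2}^{2}$, and $\Mzv\setminus B_{2}^{2}\cong\Mzv^{2}\setminus(Y_{41}\cup Y_{42})$; hence $mD$ is base-point-free off $Y_{41}\cup Y_{42}$ and $D\cdot C\ge 0$ for every curve not contained there. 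Replace your case analysis with this argument. (A secondary point: $\Mzv^{2}$ is constructed as an algebraic space via Artin's contraction, so Kleiman's criterion needs justification; the paper invokes \cite[Lemma 4.12]{FS11} for this.)
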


\begin{proof}
Because it is a contraction of $M_{3}$, which is a projective variety, $\Mzv^{2}$ satisfies the assumption of \cite[Lemma 4.12]{FS11}. Thus we can apply Kleiman's criterion and we will show that for $D \in [\psi-5K_{\Mzv}, \psi-3K_{\Mzv}]$, $D$ is nef. 

Since $mD$ for $m \gg 0$ is base-point-free for all $\Mzv - B_{2}^{2} \cong \Mzv^{2} - Y_{41} \cup Y_{42}$, it is enough to check that for all curve classes on $Y_{41} \cup Y_{42}$, the intersection with $D$ is nonngative. The curve cone of $Y_{41}$ is generated by single rational curve $\ell$, which is the image of a fiber $f$ in $\mathbb{F}_{3}$. So it suffices to compute $D \cdot \ell$. The computation of the intersection number of curve class in $Y_{42}$ is identical.

It is easy to see that $B_{2} \cdot \ell = 1$ from the description of $M_{4}$. To compute $B_{3} \cdot \ell$, we need to keep track the proper transform of $B_{3}$. Note that there are seven irreducible components (say $B_{31}, \cdots, B_{37}$) of $B_{3}$ intersect $X_{0}$. If we write $\mathrm{Pic}(X_{0}) = \langle h_{1}, h_{2}\rangle$ where $h_{1}$ (resp. $h_{2}$) is the curve class of $\PP^{1} \times \{*\}$ (resp. $\{*\} \times \PP^{1}$), three of them ($B_{31}, B_{32}, B_{33}$) are $h_{1}$, other three of them ($B_{34}, B_{35}, B_{36}$) are $h_{2}$, and the other ($B_{37}$) is $h_{1}+h_{2}$ class, which is the diagonal set-theoretically. By keep tracking the proper transforms, one can check that on $M_{3}$, $Y_{31} \subset B_{3i}$ for $i = 1, 2, 3, 7$, $Y_{31} \cap B_{3j} = \PP^{1}= f$ for $j = 4, 5, 6$. Also $X_{3} \cap Y_{3k}$ is a plane for $k = 1, 2, \cdots, 6$, but $X_{3} \cap Y_{37}$ is a quadric containing two skew lines $Y_{31}\cap X_{3}, Y_{32}\cap X_{3}$. 

Analytic locally near $X_{4}$, $M_{4}$ is isomorphic to a cone over degree 3 Veronese embedding of $\PP^{3}$ in $\PP^{19}$, $Y_{41}$ is a cone over a twisted cubic curve, and $M_{3}$ is the blow-up of the conical point. If we take the pull-back of a hyperplane class $H \subset \PP^{20}$ containing $X_{4}$ for $\pi : M_{3} \to M_{4}$, $\pi^{*}H = \widetilde{H} + X_{3}$ where $\widetilde{H}$ is the proper transform of $H$. Note that $\widetilde{H} \cap X_{3} \subset X_{3} \cong \PP^{3}$ is a cubic surface. Therefore $\pi^{*}\pi_{*}B_{3i} = B_{3i} + \frac{1}{3}X_{3}$ for $i =1, \cdots 6$, $\pi^{*}\pi_{*}B_{37} = B_{37}+\frac{2}{3}X_{3}$. Now 
\begin{eqnarray*}
	B_{3}\cdot \ell &=& \pi^{*}B_{3} \cdot f = \sum_{i=1}^{7}B_{3i}\cdot f 
	+ 6 \cdot \frac{1}{3}X_{3}\cdot f + \frac{2}{3}X_{3}\cdot f\\
	&=& (B_{31}+B_{32}+B_{33}+B_{37})\cdot f + \frac{8}{3}.
\end{eqnarray*}
For a 1-dimensional fiber $f'$ of $Y_{21} \to Y_{11}$, $f'$ maps to $f$ by $Y_{21} \to Y_{31}$. By projection formula for $\rho : M_{2} \to M_{3}$, 
\[
	B_{3i}\cdot f = \rho^{*}B_{3i} \cdot f' = \widetilde{B}_{3i}\cdot f' 
	+ Y_{21}\cdot f' = Y_{21} \cdot f' = -1
\]
if we denote the proper transform of $B_{3i}$ by $\widetilde{B}_{3i}$. Therefore
\[
	B_{3}\cdot \ell = -4 + \frac{8}{3} = -\frac{4}{3}.
\]

For $D = \psi - aK_{\Mzv}$, $D \equiv \frac{5+a}{3}B_{2}+2B_{3}$ by Corollary \ref{cor:divisorclasscomputation}. So $D \cdot \ell = \frac{a-3}{3}$ and it is nonnegative if $a \ge 3$. 
\end{proof}

\subsection{Divisorial contraction}\label{ssec:divcontraction}

The last birational model $\Mzv^{1}$ is a divisorial contraction.

\begin{lemma}
Let $D = \psi - 5K_{\Mzv}$. Then $D$ is a semi-ample divisor on $\Mzv^{2}$. 
\end{lemma}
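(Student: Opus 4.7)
The plan is to combine a nef-ness argument via taking limits with the stable base locus analysis from Section \ref{sec:stablebaselocus}, exploiting flip-invariance of global sections and finishing with a local analysis at the singular points of $\Mzv^{2}$.

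\textbf{Nef-ness.} By Lemma \ref{lem:amplenessofD}, for every $t \in (3, 5)$ the divisor $\psi - tK_{\Mzv}$ is ample on $\Mzv^{2}$. Since the nef cone is the closure of the ample cone, letting $t \to 5^{-}$ shows that $D = \psi - 5K_{\Mzv}$ lies in the closed nef cone of $\Mzv^{2}$, hence is nef.

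\textbf{Reducing to the singular points.} Both flips $\Mzv \dashrightarrow \Mzv^{3} \dashrightarrow \Mzv^{2}$ are isomorphisms in codimension one, so the canonical identification of divisor classes preserves spaces of global sections: $H^{0}(\Mzv, \cO(mD)) = H^{0}(\Mzv^{2}, \cO(mD))$ for every $m \ge 0$. By Proposition \ref{prop:stablebaselocus}(4), $\mathbf{B}(D) = B_{2}^{2}$ on $\Mzv$. The composition of the two flips contracts (the proper transform of) $B_{2}^{2}$ onto the finite set of singular points $\{X_{4}\}$ of $\Mzv^{2}$ described in Proposition \ref{prop:secondflip}. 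Consequently, $\mathbf{B}(D) \subseteq \{X_{4}\}$ on $\Mzv^{2}$.

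\textbf{Base-point-freeness at $X_{4}$.} To eliminate the remaining potential base points, I appeal to the local analytic structure recalled in the proof of Lemma \ref{lem:amplenessofD}: near each $X_{4}$, the variety $\Mzv^{2}$ is a cone over the degree-$3$ Veronese embedding $\PP^{3} \hookrightarrow \PP^{19}$. The intersection computation in Lemma \ref{lem:amplenessofD}, specialized to $a = 5$, gives $D \cdot \ell = \frac{5-3}{3} = \frac{2}{3} > 0$ on a ruling $\ell$ of $Y_{41}$ (and similarly for $Y_{42}$). This identifies $D$, up to a positive multiple, with the restriction of the cone polarization. The cone polarization becomes globally generated after passing to a sufficiently divisible multiple, producing sections not vanishing at the cone point. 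Via the identification $H^{0}(\Mzv, \cO(mD)) = H^{0}(\Mzv^{2}, \cO(mD))$, these extend to global sections, showing $|mD|$ has no base points at $X_{4}$ for $m$ sufficiently large. Combined with the previous step, $|mD|$ is base-point free on all of $\Mzv^{2}$, so $D$ is semi-ample.

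The main obstacle is the last step: making the identification of $D$ near $X_{4}$ with a positive multiple of the cone polarization precise and extending the corresponding local sections to globally defined ones. This requires careful bookkeeping of pullback and pushforward through the contraction $M_{3} \to M_{4} = \Mzv^{2}$, but is essentially contained in the intersection calculations already performed in Lemma \ref{lem:amplenessofD}. An alternative route is to invoke the Kawamata--Shokurov base-point-free theorem directly, after verifying that $aD - K_{\Mzv^{2}}$ is nef and big for some $a > 0$; this is attractive since $\Mzv^{2}$ has at worst klt singularities as a small modification of the smooth $\Mzv$, but the cost is an explicit computation of $K_{\Mzv^{2}}$.
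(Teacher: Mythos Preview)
Your nef-ness argument is fine and matches the paper's. The gap is in your second step, and it is a genuine one.

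You write that ``the composition of the two flips contracts (the proper transform of) $B_{2}^{2}$ onto the finite set of singular points $\{X_{4}\}$'', and conclude $\mathbf{B}(D) \subseteq \{X_{4}\}$ on $\Mzv^{2}$. But a flip is not a contraction. On $\Mzv^{3}$ each component $X_{0}$ of the proper transform of $B_{2}^{2}$ is contracted to a point only by the map $\phi_{2}$ to the \emph{base} $\Mzv(\psi-3K_{\Mzv})$; on the flip $\Mzv^{2}$ that point is replaced by the two-dimensional locus $Y_{41}\cup Y_{42}$ (Proposition \ref{prop:secondflip}). Moreover the first flip already introduces exceptional $\PP^{2}$'s on $\Mzv^{3}$ which are not contained in the proper transform of $B_{2}^{2}$, and their proper transforms persist on $\Mzv^{2}$. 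Transporting sections through the open set where the composite birational map is an isomorphism therefore only yields
\[
\mathbf{B}(D)\ \subseteq\ \bigl(\text{proper transform of }B_{2}\bigr)\ \cup\ \bigl(\textstyle\bigcup Y_{4i}\bigr)
\]
on $\Mzv^{2}$, not $\mathbf{B}(D)\subseteq\{X_{4}\}$. Your third step then does not address the real remaining locus, and the local cone argument you sketch (even granting it) would not finish the proof; you yourself flag that extending the local sections is the ``main obstacle''.

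The paper handles this residual locus by a completely different mechanism. After the reduction above, it observes that on $\Mzv$ each irreducible component $B_{I}$ of $B_{2}$ is isomorphic to $\Mzs$, hence a Mori dream space; its proper transform on $\Mzv^{2}$ is a small $\QQ$-factorial modification of $B_{I}$ and is therefore again a Mori dream space. Since $D$ is nef, $D|_{B_{I}}$ is nef, and on a Mori dream space nef implies semi-ample, so $mD|_{B_{I}}$ is base-point-free for $m\gg 0$. Together with the positivity of $D$ on the curves covering $Y_{4i}$ established in Lemma \ref{lem:amplenessofD}, this kills the stable base locus. The Mori dream space step is the key idea you are missing; your cone-polarization heuristic does not substitute for it. (Your proposed alternative via the base-point-free theorem is plausible in principle, but you would still need to check that $\Mzv^{2}$ is klt at the non-$\QQ$-factorial points $X_{4}$ and compute $K_{\Mzv^{2}}$ there, which you have not done.)
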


\begin{proof}
By Proposition \ref{prop:stablebaselocus}, the stable base locus is contained in the union of the proper transform of $B_{2}$ and $\cup Y_{4i}$. By the proof of Lemma \ref{lem:amplenessofD}, $D$ is ample on $\cup Y_{4i}$. So it suffices to show that $D$ is semi-ample on the proper transform of $B_{2}$.

Since $D$ is in the closure of the ample cone of $\Mzv^{2}$, $D$ is nef. In particular, if $B_{I}$ is an irreducible (equivalently on $\Mzv^{2}$, connected) component of $B_{2}$, $D|_{B_{I}}$ is nef. But on $\Mzv$, $B_{I} \cong \Mzs$ so it is a Mori dream space. Since the proper transform of $B_{I}$ on $\Mzv^{2}$ is a flip of $B_{I}$, it is a Mori dream space, too. Thus for $m \gg 0$, $mD|_{B_{I}}$ is base-point-free. Thus $\mathbf{B}(D) = \emptyset$ on $\Mzv^{2}$ and it is semi-ample.
\end{proof}

Let $\Mzv^{1} = \Mzv(\psi - 5K_{\Mzv}) = \Mzv^{2}(\psi-5K_{\Mzv})$. Since $B_{2}$ is covered by a curve class $C_{5}$ such that $C_{5}\cdot D = 0$, so $\Mzv^{1}$ is a divisorial contraction of $\Mzv^{2}$. 

\begin{proposition}\label{prop:divcontraction}
For $D \in (B_{2}, \psi - 5K_{\Mzv}]$, $\Mzv(D) \cong \Mzv^{1}$.
\end{proposition}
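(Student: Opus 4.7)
The plan is to transfer the computation of the section ring across the two flips and then across the divisorial contraction, so that everything reduces to computing the section ring of an ample divisor on $\Mzv^{1}$. Since $\Mzv \dashrightarrow \Mzv^{3} \dashrightarrow \Mzv^{2}$ is the composition of the two flips constructed in Propositions \ref{prop:firstflip} and \ref{prop:secondflip}, it is an isomorphism outside codimension two. Therefore, for the proper transform $D'$ on $\Mzv^{2}$ of any symmetric effective divisor $D$ on $\Mzv$, Hartogs's theorem gives $H^{0}(\Mzv, \cO(mD)) \cong H^{0}(\Mzv^{2}, \cO(mD'))$ for every $m \geq 0$, so $\Mzv(D) \cong \Mzv^{2}(D')$.

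Write $D \equiv a B_{2} + b(\psi - 5K_{\Mzv})$ with $a \geq 0$ and $b > 0$. By the lemma preceding the statement, $\psi - 5K_{\Mzv}$ is semi-ample on $\Mzv^{2}$ and defines the divisorial contraction $\pi : \Mzv^{2} \to \Mzv^{1}$; in particular, there exists an ample divisor $A$ on $\Mzv^{1}$ with $\pi^{*}A$ numerically proportional to $\psi - 5K_{\Mzv}$, and the $\pi$-exceptional divisor is precisely the proper transform $B_{2}'$ of $B_{2}$ (since $C_{5} \cdot (\psi - 5K_{\Mzv}) = 0$ and $C_{5}$ sweeps out $B_{2}$). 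On $\Mzv^{2}$ we may therefore write
$$D' \equiv a B_{2}' + b \pi^{*}A.$$

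Because $B_{2}'$ is $\pi$-exceptional, the image $\pi(B_{2}')$ has codimension at least two in the normal variety $\Mzv^{1}$, so the standard identity $\pi_{*}\cO_{\Mzv^{2}}(n B_{2}') = \cO_{\Mzv^{1}}$ for all $n \geq 0$ holds (sections extend across $\pi(B_{2}')$ by Hartogs). The projection formula then yields
$$H^{0}(\Mzv^{2}, \cO(mD')) = H^{0}(\Mzv^{1}, \pi_{*}\cO_{\Mzv^{2}}(ma B_{2}') \otimes \cO(mb A)) = H^{0}(\Mzv^{1}, \cO(mb A))$$
for every $m \geq 0$. Since $A$ is ample on $\Mzv^{1}$, the Proj of this graded ring is $\Mzv^{1}$, whence $\Mzv(D) \cong \Mzv^{1}$.

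There is no substantial obstacle, since the argument rests on two standard facts: section rings of divisors are preserved under small birational modifications (handling the flips), and twisting by a non-negative multiple of an exceptional divisor of a birational contraction between normal varieties does not affect the pushforward to the base. The only point requiring care is the identification of $B_{2}'$ as the exceptional divisor of $\pi$, which is exactly what was established in the construction of $\Mzv^{1}$ just before the statement.
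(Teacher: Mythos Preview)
Your proof is correct and follows essentially the same approach as the paper: write $D$ as a nonnegative combination of $\psi - 5K_{\Mzv}$ and $B_{2}$, transfer the section ring across the two flips (isomorphisms in codimension one), and then absorb the $B_{2}$-part using that it is $\phi_{1}$-exceptional. The paper's proof records exactly this chain $\Mzv(D) \cong \Mzv^{2}(D) \cong \Mzv^{2}(\psi-5K_{\Mzv}) \cong \Mzv^{1}$ in a single line, whereas you have spelled out the Hartogs and projection-formula justifications that the paper leaves implicit.
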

\begin{proof}
Note that for $D \in (B_{2}, \psi-5K_{\Mzv}]$, $D \equiv (\psi - 5K_{\Mzv}) + cB_{2}$ for some $c \ge 0$. Because $B_{2}$ is an exceptional divisor for $\phi_{1} : \Mzv^{2} \to \Mzv^{1}$, $\Mzv(D) \cong \Mzv^{2}(D) \cong \Mzv^{2}(\psi-5K_{\Mzv}) \cong \Mzv^{1}$.
\end{proof}

\section{KKO compactification}\label{sec:KKOcompactification}

In this section, we give a review of KKO compactification of moduli of curves of genus $g$ in a smooth projective variety $X$, which will be used to describe a modular interpretation of $\Mzv^{3}$ in next section. For the detail of its construction, consult the original paper of Kim, Kresch, and Oh (\cite{KKO14}). 

\subsection{FM degeneration space}

Fix a nonsingular projective variety $X$. Let $X[n]$ be the Fulton-MacPherson space of $n$ distinct ordered points in $X$. It is a compactification of the moduli space of $n$ ordered distinct points on $X$, which is obviously $X^{n}\setminus \Delta$. See \cite{FM94} for the construction and its geometric properties. $X[n]$ has a universal family $\pi : X[n]^{+} \to X[n]$ and $n$ disjoint universal sections $\sigma_{i} : X[n] \to X[n]^{+}$ for $1 \le i \le n$. 

For a point $p \in X[n]$, the fiber $\pi^{-1}(p)$ is a possibly reducible variety, whose irreducible components are smooth and equidimensional. As an abstract variety, $\pi^{-1}(p)$ can be constructed in the following manner. Set $X_{0} := X$. Take a point $x_{0} \in X$ and blow-up $X_{0}$ along $x_{0}$. Let $\widetilde{X}_{0} := \mathrm{Bl}_{x_{0}}X_{0}$ and $E_{1}$ be the exceptional divisor, which is naturally isomorphic to $\PP(T_{x_{0}}X_{0})$. Now consider the compactified tangent space $\PP T := \PP(T_{x_{0}}X_{0}\oplus \CC)$, which has a subvariety $\PP(T_{x_{0}}X_{0}) \cong \PP T - T_{x_{0}}X_{0}$. Glue $\widetilde{X}_{0}$ and $\PP T$ along $\PP(T_{x_{0}}X_{0})$ and let $X_{1}$ be the result. 

We are able to continue this construction, by taking a nonsingular point $x_{1} \in X_{1}$ and construct $X_{2}$ in a same way. If we repeat this procedure several times, we inductively obtain $X_{k}$, which is a reducible variety. $\pi^{-1}(p)$ is isomorphic to $X_{k}$ for some $k \ge 0$ and some $x_{0}$, $x_{1}$, $\cdots$, $x_{k-1}$. Note that there is a natural projection $X_{k} \to X$. In can be extended to a canonical morphism $\pi_{X} : X[n]^{+} \to X$.

\begin{remark}
\begin{enumerate}
\item The singular locus of $X_{k}$ is isomorphic to a union of disjoint $\PP^{r-1}$'s. 
\item Naturally the dual graph of $X_{k}$ is a tree with a root. The proper transform of $X_{0}$ corresponds to the root. A non-root component is called a \textbf{screen}. The \textbf{level} of an irreducible component of $X_{k}$ is defined by the number of edges from the root to the vertex representing the component. 
\item If an irreducible component $Y$ of $X_{k}$ does not contains any $x_{i}$, then $Y \cong \PP^{r}$. $Y$ is called an \textbf{end component}.
\item If an irreducible component $Z$ of $X_{k}$ is not the root component and it contains only two singular loci, then $Z \cong \mathrm{Bl}_{p}\PP^{r}$, which is a ruled variety. $Z$ is called a \textbf{ruled component}.
\end{enumerate}
\end{remark}

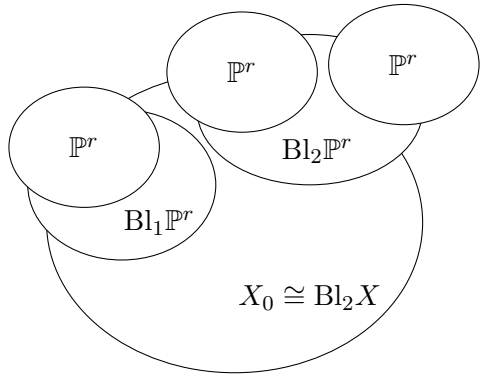
\begin{figure}[!ht]
\begin{tikzpicture}[scale=0.5]
	\draw (0,0) ellipse (5 and 4);
	\fill[color=white] (2,3) ellipse (3 and 2);
	\draw (2,3) ellipse (3 and 2);
	\fill[color=white] (-3,1) ellipse (2.5 and 2);
	\draw (-3,1) ellipse (2.5 and 2);
	\fill[color=white] (0.2,4) ellipse (2 and 1.6);
	\draw (0.2,4) ellipse (2 and 1.6);
	\fill[color=white] (4.5,4.2) ellipse (2 and 1.6);
	\draw (4.5,4.2) ellipse (2 and 1.6);
	\fill[color=white] (-4,2) ellipse (2 and 1.6);
	\draw (-4,2) ellipse (2 and 1.6);
	\node (root) at (2,-2){$X_{0} \cong \mathrm{Bl}_{2}X$};
	\node (1) at (-2, 0){$\mathrm{Bl}_{1}\PP^{r}$};
	\node (2) at (-4,2){$\PP^{r}$};
	\node (3) at (2.2,1.9){$\mathrm{Bl}_{2}\PP^{r}$};
	\node (4) at (4.5,4.2){$\PP^{r}$};
	\node (5) at (0.2,4){$\PP^{r}$};
\end{tikzpicture}
\caption{An example of FM degeneration space}\label{fig:FMspace}
\end{figure}

\begin{definition}\cite[Definition 2.1.1]{KKO14}
A pair $(\pi_{W/B} \to B, \pi_{W/X} : W \to X)$ is called a Fulton-MacPherson degeneration space of $X$ over a scheme $B$ (or an FM degeneration space of $X$ over $B$) if:
\begin{itemize}
\item $W$ is an algebraic space;
\item \'Etale locally it is a pull-back of the universal family $\pi: X[n]^{+} \to X[n]$. That is, there is an \'etale surjective morphism $B' \to B$ from a scheme $B'$, $n > 0$ and a Cartesian diagram
\[
	\xymatrix{W|_{B'} \ar[r] \ar[d] & X[n]^{+} \ar[d]\\
	B' \ar[r] & X[n]}
\]
where the pull-back of $\pi_{W/X}$ to $W|_{B'}$ is equal to $W|_{B'} \to X[n]
^{+} \to X$.
\end{itemize}
\end{definition}

Let $W$ be an FM space over $\CC$. An automorphism of $W/X$ is an automorphism $\varphi : W \to W$ fixing the root component, or equivalently, $\pi_{W/X} \circ \varphi = \pi_{W/X}$. If $W \ncong X$, $\mathrm{Aut}(W/X)$ is always positive dimensional. More precisely, for an end component $Y$ of $W$, the automorphism fixing all $W$ except $Y$ is isomorphic to $\CC^{r} \rtimes \CC^{*}$, the group of homotheties. Also for a ruled component $Z$ of $W$, the automorphism fixing $W$ except $Z$ is isomorphic to $\CC^{*}$. The other irreducible components do not contribute to a non-trivial automorphism of $W/X$. 

We leave a useful lemma to show several geometric properties of KKO compactifications. 

\begin{lemma}\label{lem:unifamilycontraction}
For $m > n$, there is a commutative diagram
\[
	\xymatrix{X[m]^{+} \ar[r] \ar[d] & X[n]^{+}\ar[d]\\
	X[m] \ar[r] & X[n].}
\]
Two vertical maps are universal families, and the horizontal maps obtained by forgetting $m-n$ marked points and stabilizing. 
\end{lemma}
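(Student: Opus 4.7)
The plan is to build both horizontal arrows via the moduli-theoretic universal property of Fulton--MacPherson spaces, treating $X[k]$ as the parameter space of stable FM degeneration spaces of $X$ with $k$ ordered marked points. By iteration it suffices to handle the case $m = n+1$, since a general $m > n$ factors as a chain $X[m] \to X[m-1] \to \cdots \to X[n]$ whose squares compose.

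To build the base arrow $X[n+1] \to X[n]$, start with the universal family $\pi : X[n+1]^{+} \to X[n+1]$ together with its $n+1$ universal sections $\sigma_{1}, \dots, \sigma_{n+1}$. Forgetting $\sigma_{n+1}$ produces a family over $X[n+1]$ whose geometric fibers are FM degeneration spaces of $X$ carrying $n$ ordered marked points; however, over points of $X[n+1]$ for which $\sigma_{n+1}$ lies on a screen containing at most one other special point, the fiber acquires an unstable end component supporting too few markings. Contracting precisely these unstable screens fiberwise yields a family of stable $n$-pointed FM degenerations of $X$, which by the universal property of $X[n]$ is classified by the desired morphism $X[n+1] \to X[n]$. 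Note that the stabilization can cascade along a chain of end components but stops as soon as one reaches a screen that is either not an end component or carries enough markings.

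The top arrow $X[n+1]^{+} \to X[n]^{+}$ is then obtained as the composition of the stabilization morphism of families $X[n+1]^{+} \to X[n+1] \times_{X[n]} X[n]^{+}$ (a proper birational contraction which is the identity on fibers that were already stable) with the second projection to $X[n]^{+}$. Commutativity of the square is immediate, since both compositions $X[n+1]^{+} \to X[n]$ agree with $\pi$ followed by the base forgetful map just constructed. The only nontrivial step is performing the screen contractions simultaneously in families rather than only fiberwise: each unstable end screen is a copy of $\PP^{r}$ meeting the remainder of $W$ along a single $\PP^{r-1}$ whose normal data is controlled explicitly by the Fulton--MacPherson blow-up construction, so each can be smoothly blown down onto that $\PP^{r-1}$. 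To carry this out over the base I would invoke the \'etale-local triviality built into the definition of an FM degeneration space (via the Cartesian diagram to the universal $X[N]^{+} \to X[N]$ for large $N$), which reduces the family-wise contraction to the universal explicit blow-up description of $X[n]^{+}$ in \cite{FM94}. This is the main place where the proof requires genuine work rather than formal manipulation.
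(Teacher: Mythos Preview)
Your argument is correct, but it takes a considerably longer route than the paper's. The paper exploits a single structural fact from \cite{FM94}: $X[n+1]$ is obtained from $X[n]^{+}$ by blowing up the images of the $n$ universal sections, and $X[n]^{+}$ itself is an iterated blow-up of $X[n]\times X$. This immediately supplies the bottom horizontal map as the composition $X[n+1]\to X[n]^{+}\to X[n]$ (blow-down followed by the universal family projection), and the whole square drops out of the resulting diagram of iterated blow-ups; no moduli-theoretic universal property or family-wise stabilization argument is invoked at all.

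Your approach instead treats $X[n]$ as a moduli space and constructs the forgetful map via stabilization. This is conceptually cleaner in that it makes transparent what the horizontal arrows actually do to points (forget a marking, contract unstable screens), which the paper's proof leaves implicit. The cost is that you must justify the stabilization in families, and as you yourself note, the honest way to do this ultimately falls back on the explicit blow-up description anyway. So the paper's proof is essentially the endgame of yours, reached without the moduli-theoretic preamble. One caution: you rely on a universal property of $X[n]$ as a fine moduli space of stable $n$-pointed FM degenerations; while this is true, it is not how \cite{FM94} packages things, so in a self-contained write-up you would want to either cite a source that states it in that form or derive it from the explicit construction.
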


\begin{proof}
By induction, it suffices to show for $m = n +1$ case. Note that $X[n+1]$ is obtained by taking a blow-up of $X[n]^{+}$ along the image of $n$ sections (\cite[195p]{FM94}). On the other hand, $X[n]^{+}$ is constructed by taking iterated blow-ups of $X[n] \times X$. Hence we have a commutative diagram
\[
	\xymatrix{X[n+1]^{+} \ar[r] \ar[d] & X[n]^{+} \ar[d]\\
	X[n+1] \times X \ar[d] & X[n] \times X\ar[d]\\
	X[n+1] \ar[uur] \ar[r] & X[n].}
\]
\end{proof}

\subsection{Stable unramified maps}

\begin{definition}\cite[Definition 3.1.1]{KKO14}\label{def:unramifiedstablemap}
A collection of data
\[
	((C, x_{1}, x_{2}, \cdots, x_{n}), \pi_{W/X} : W \to X, f : C \to W)
\]
is called an \textbf{$n$-pointed stable unramified map} of type $(g, \beta)$ to an FM degeneration space $W$ of $X$ if:
\begin{enumerate}
\item $(C, x_{1}, x_{2}, \cdots, x_{n})$ is an $n$-pointed prestable curve with arithmetic genus $g$;
\item $\pi_{W/X}: W \to X$ is an FM degeneration space of $X$ over $\CC$;
\item $(\pi_{W/X} \circ f)_{*}[C] = \beta \in A_{1}(X)$;
\item $f^{-1}(W^{sm}) = C^{sm}$, where $Y^{sm}$ is the smooth locus of $Y$.
\item $f|_{C^{sm}}$ is unramified everywhere;
\item $f(x_{i})$ for $1 \le i \le n$ are distinct;
\item At each nodal point $p \in C$, there are coordinates 
\[
	\hat{\cO}_{p} \cong \CC[[x,y]]/(x,y) \mbox{ and } 
	\hat{\cO}_{f(p)} \cong \CC[[z_{1}, \cdots ,z_{r+1}]]/(z_{1}z_{2})
\]
such that $\hat{f}^{*}: \CC[[z_{1}, \cdots ,z_{r+1}]]/(z_{1}z_{2}) \to \CC[[x,y]]/(xy)$ maps $z_{1}$ to $x^{m}$ and $z_{2}$ to $y^{m}$ for some $m \in \NN$. 
\item There are finitely many automorphisms $\sigma : C \to C$ such that $\sigma(x_{i}) = x_{i}$ for $1 \le i \le n$ and $f \circ \sigma = \varphi \circ f$ for some $\varphi \in \mathrm{Aut}(W/X)$. 
\end{enumerate}
\end{definition}

We can define the \textbf{level} of an irreducible component $C'$ of $C$ by the level of the component of $W$ containing $f(C')$. A component $C'$ with a positive level is called a \textbf{ghost component}. 

\begin{remark}
The last condition about the finiteness of automorphisms can be described conditions on end components and ruled components in the following way. A map $f : C \to W$ has a finite automorphism group if and only if:
\begin{itemize}
\item For each end component $Y$ of $W$, the number of marked points on $Y$ is at least two or there is an irreducible component $D$ of $C$ such that $f(D) \subset Y$ and $\deg f(D) \ge 2$;
\item For each ruled component $Z$ of $W$, there is at least one marked point on $Z$ or there is an irreducible component $D \subset C$ such that $f(D)$ is not contained in a ruling. 
\end{itemize}
\end{remark}

\begin{definition}\cite[Definition 3.2.1]{KKO14}\label{def:familyofunramifiedmaps}
A collection of data
\[
	((\pi : \cC \to B, \sigma_{1}, \cdots, \sigma_{n}), (\pi_{W/B} : W \to B, 
	\pi_{W/X}: W \to X), f : \cC \to W)
\]
is called a \textbf{$B$-family of $n$-pointed stable unramified maps} of type $(g, \beta)$ to FM degeneration spaces of $X$, if:
\begin{enumerate}
\item $(\pi : \cC \to B, \sigma_{1}, \sigma_{2}, \cdots, \sigma_{n})$ is a family of $n$-pointed genus $g$ prestable curves over $B$;
\item $(\pi_{W/B}: W \to B, \pi_{W/X} : W \to X)$ is an FM degeneration space of $X$ over $B$;
\item Over each geometric point of $B$, the data restricted to the fiber is a stable unramified map of type $(g, \beta)$ to an FM degeneration space of $X$;
\item For every geometric point $b \in B$, if $p \in C_{b}$ is a nodal point, then there are two identifications 1) $\hat{\cO}_{f(p)} \cong \hat{\cO}_{\pi_{W/B}(p)}[[z_{1}, z_{2}, \cdots, z_{r+1}]]/(z_{1}z_{2} - t)$ for some $t \in \hat{\cO}_{\pi_{W/B}(p)}$ and 2) $\hat{\cO}_{p} \cong \hat{\cO}_{\pi(p)}[[x, y]]/(xy - t')$ for some $t' \in \hat{\cO}_{\pi(p)}$ such that $\hat{f}^{*}(z_{1}) = \alpha_{1}x^{m}$, $\hat{f}^{*}(z_{2}) = \alpha_{2}y^{m}$ for some $m \in \NN$, $\alpha_{1}, \alpha_{2} \in \hat{\cO}_{p}^{*}$, and $\alpha_{1}\alpha_{2} \in \hat{\cO}_{\pi(p)}$. 
\end{enumerate}
\end{definition}

Let $\overline{\cU}_{g,n}(X, \beta)$ be the fibered category of $n$-pointed unramified stable maps to FM degeneration spaces of $X$ of type $(g, \beta)$. 

\begin{theorem}\cite[Corollary 3.3.3]{KKO14}
The fibered category $\overline{\cU}_{g,n}(X, \beta)$ is a proper Deligne-Mumford stack of finite type. 
\end{theorem}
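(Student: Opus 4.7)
The plan is to verify in turn the four properties that make up the statement: stack, Deligne--Mumford, finite type, and properness. I would unwind Definition \ref{def:familyofunramifiedmaps} into three pieces of data over $B$ --- a family of prestable pointed curves $(\cC\to B,\sigma_{i})$, an FM degeneration family $(W\to B, W\to X)$, and a morphism $f:\cC\to W$ over $B$ --- and note that each satisfies \'etale descent separately. Prestable pointed curves form a smooth Artin stack. FM degeneration families are by construction \'etale-locally pulled back from the universal family $X[n]^{+}\to X[n]$, which is a scheme, so they descend. The pointwise conditions (4)--(7) of Definition \ref{def:unramifiedstablemap} and the local normal form at nodes in Definition \ref{def:familyofunramifiedmaps}(4) are \'etale-local and open, and the automorphism-finiteness condition (8) is constructible; together these yield a stack.

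For the Deligne--Mumford property, finiteness of geometric stabilizers is precisely stability condition (8). To get unramifiedness of the diagonal, I would analyze infinitesimal automorphisms of a triple $(C,W,f)$ fixing $X$ and the data of marked points. An infinitesimal automorphism is a pair $(\xi,\varphi)$ of tangent vectors to $\mathrm{Aut}(C,x_i)$ and $\mathrm{Aut}(W/X)$ satisfying $df\circ\xi=\varphi\circ f$. On the smooth locus of $C$, the unramifiedness of $f$ implies that $df$ is injective on tangent spaces, so $\xi$ is determined by $\varphi$ wherever defined. At nodes, the local model $z_{1}\mapsto x^{m}$, $z_{2}\mapsto y^{m}$ has only discrete residual automorphism, and on rigid components of $W$ the automorphism group of $W/X$ vanishes. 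Combined with stability, this forces the space of infinitesimal automorphisms to be zero.

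For finite type, I would bound the discrete data. Fixing $g$, $n$, and $\beta$, the composition $\pi_{W/X}\circ f:C\to X$ is a (possibly non-representable) stable map of class $\beta$ and genus $g$, so its source lies in a bounded family. Each screen of $W$ contributes at least one irreducible component of $f_{*}[C]$ (via the non-contraction condition (4) and stability (8)) or a marked point, so the number of screens and the level of $W$ are bounded in terms of $\beta$ and $n$; hence $W$ itself lies in finitely many components of the FM stack. Then boundedness of the map follows from Hilbert scheme boundedness for maps into a bounded family of targets.

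Properness is the technical heart, and I would attack it via the valuative criterion. Given a family over the generic point of a DVR, extend the curve to a family of prestable curves and extend the target to an FM degeneration family (using the projectivity of $X[n]$), then take a naive limit map. The limit may violate conditions (4)--(6) of Definition \ref{def:unramifiedstablemap} by contracting a component to a singular stratum, producing a ramification point, or causing two marked points to coincide; the standard remedy is to replace $W$ by a further blowup along the offending stratum, which adds a screen and lifts the limit map. One then iterates. The expected main obstacle is proving this iteration terminates with a map satisfying stability (8): at each step one must exhibit a numerical invariant (for example, a combination of the ramification divisor, the colliding-section multiplicity, and the contact orders at singular strata) that strictly decreases, and one must check that the introduced screens are never ``trivial'' in the sense of violating stability. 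Separatedness and uniqueness of the extension follow from the rigidifying role of the unramified condition together with the DM property proved above.
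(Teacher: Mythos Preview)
The paper does not prove this statement; it simply quotes the result from \cite[Corollary 3.3.3]{KKO14} and moves on. There is no argument in the paper to compare against, so your sketch is not competing with anything the author wrote.

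That said, a brief comment on your outline relative to the actual source: your strategy is broadly the right shape, but the proof in \cite{KKO14} leans heavily on log geometry in a way your sketch does not. In particular, the properness argument there is not a bare-hands iteration of blowups with an ad hoc decreasing invariant; rather, Kim--Kresch--Oh endow both the domain curve and the FM target with canonical log structures (this is what the paper reviews in Section~\ref{ssec:deformation}) and use the machinery of log stable maps and the expansion formalism to control the limit. The termination of the ``add a screen and try again'' process is exactly the delicate point you flag, and it is handled via the admissibility/predeformability condition (your item (7) in Definition~\ref{def:unramifiedstablemap}) together with the log-smoothness of the universal FM family, not by a direct numerical invariant on ramification. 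Your sketch of the DM and finite-type parts is essentially correct in spirit, though again boundedness of the target in \cite{KKO14} goes through the observation that the FM space $X[n]^{+}\to X[n]$ is already a projective family, so one only needs to bound $n$, which is done as you indicate.

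In short: nothing to correct in the context of this paper, since the author is only citing the result; if you want a self-contained argument you should either invoke the log framework or be much more explicit about the termination step in the valuative criterion.
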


As in the title of this section, we will call $\overline{\cU}_{g,n}(X, \beta)$ as the \textbf{KKO compactification} of moduli space of embedded curves. By Keel-Mori theorem, we have a coarse moduli space $\overline{U}_{g,n}(X, \beta)$ in the category of algebraic spaces. 

\subsection{Some geometric properties}

In this section, we explain several geometric/functorial properties of $\overline{\cU}_{g,n}(X, \beta)$. 

As in the case of moduli space of ordinary stable maps, there are several functorial maps. Let $\overline{\cM}_{g,n}(X, \beta)$ be the moduli stack of stable maps (\cite{KM94}). 

\begin{proposition}\label{prop:stabilization}
There is a functorial morphism 
\[
	S : \overline{\mathcal{U}}_{g,n}(X, \beta) \to 
	\overline{\mathcal{M}}_{g,n}(X, \beta).
\]
\end{proposition}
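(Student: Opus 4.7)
The plan is to construct $S$ in two steps: first compose with the contraction $\pi_{W/X}: W \to X$ to obtain a (merely) prestable map to $X$, then invoke the standard Behrend--Manin stabilization morphism for prestable maps. Given a $B$-family $((\pi: \cC \to B, \sigma_1, \ldots, \sigma_n), (W \to B, \pi_{W/X}: W \to X), f: \cC \to W)$ of $n$-pointed unramified stable maps of type $(g, \beta)$, I would form the composition $h := \pi_{W/X} \circ f: \cC \to X$. Together with the unchanged pointed curve $(\cC \to B, \sigma_1, \ldots, \sigma_n)$ this gives a $B$-family of $n$-pointed prestable maps to $X$: the arithmetic genus is preserved because $\cC \to B$ itself is unchanged, and the image class is $h_*[\cC_b] = (\pi_{W/X} \circ f)_*[\cC_b] = \beta$ directly from condition (3) of Definition \ref{def:unramifiedstablemap}.

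The second step is to postcompose with the Behrend--Manin stabilization morphism from the stack of $n$-pointed prestable maps to $X$ of type $(g,\beta)$ into $\overline{\mathcal{M}}_{g,n}(X,\beta)$; this is a morphism of stacks that contracts rational subtrees of $\cC$ that are collapsed by the map and carry fewer than three special points. In the present situation the candidates to be contracted are precisely the ghost components: if an irreducible component $C'$ of a fiber $\cC_b$ maps into a positive-level screen of $W_b$, then $\pi_{W/X}$ collapses that screen to the single point where it was attached at the previous level, so $h|_{C'}$ is constant and $C'$ is automatically eligible for contraction during stabilization.

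Both operations are compatible with arbitrary base change (composition of morphisms pulls back, and Behrend--Manin stabilization is defined functorially as a morphism of moduli stacks), so composing them yields a functorial morphism $S: \overline{\mathcal{U}}_{g,n}(X, \beta) \to \overline{\mathcal{M}}_{g,n}(X, \beta)$ as required.

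The main obstacle will be verifying that $h = \pi_{W/X} \circ f$ really defines a morphism of $B$-schemes, and that $(\cC \to B, \sigma_i, h)$ is a prestable family of maps to $X$ over all of $B$ (not just on geometric fibers), particularly across the loci where either $\cC \to B$ or $W \to B$ acquires new nodes or new screens. The étale-local identification of $W$ with a pullback of the universal FM family $X[n]^+ \to X[n]$, together with the local normal forms given in condition (7) of Definition \ref{def:unramifiedstablemap} and condition (4) of Definition \ref{def:familyofunramifiedmaps}, should reduce this to an explicit calculation in the Fulton--MacPherson space, after which the claim follows from the standard theory of stabilization for prestable maps.
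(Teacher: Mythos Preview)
Your proposal is correct and follows essentially the same two-step route as the paper: first form the composition $\pi_{W/X}\circ f$ to obtain a family of $n$-pointed prestable maps to $X$, then apply the standard stabilization (the paper phrases this as running relative MMP with respect to $\omega_{\cC/B}+\sum\sigma_i$, which amounts to the same Behrend--Manin contraction you invoke). Your final paragraph worrying about whether $h=\pi_{W/X}\circ f$ is really a $B$-morphism is unnecessary, since $\pi_{W/X}$ is given as part of the data of the FM degeneration space and composition of morphisms is automatic; the paper accordingly spends no effort on this point.
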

\begin{proof}
Let
\[
	((\pi : \cC \to B, \sigma_{1}, \cdots, \sigma_{n}), 
	(\pi_{W/B} : W \to B, \pi_{W/X}: W \to X), f : \cC \to W)
\]
be a $B$-family of $n$-pointed stable unramified maps of type $(g, \beta)$ to FM degeneration spaces of $X$. Then we have $((\pi : \cC \to B, \sigma_{1}, \cdots,  \sigma_{n}), \pi_{W/X} \circ f : \cC \to X)$, which is a flat family of maps from $n$-pointed curves to $X$. By running relative MMP with respect to $\omega_{\cC/B} + \sum \sigma_{i}$, we can stabilize $\pi_{W/X} \circ f$ and obtain 
\[
	((\bar{\pi} : \overline{\cC} \to B, \bar{\sigma}_{1}, \cdots, 
	\bar{\sigma}_{n}), \bar{f} : \overline{\cC} \to X).
\]
These two steps are both functorial, we can obtain the desired morphism $S$. 
\end{proof}

\begin{proposition}\label{prop:evaluation}
There are functorial morphisms
\[
	ev_{i} : \overline{\mathcal{U}}_{g,n}(X, \beta) \to X
\]
for $1 \le i \le n$.
\end{proposition}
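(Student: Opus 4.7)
The plan is to define $ev_{i}$ on $B$-points as the composition of the data already present in a family. Given a $B$-family
\[
	((\pi : \cC \to B, \sigma_{1}, \cdots, \sigma_{n}), (\pi_{W/B} : W \to B,
	\pi_{W/X}: W \to X), f : \cC \to W),
\]
I would set
\[
	ev_{i} := \pi_{W/X} \circ f \circ \sigma_{i} : B \to X.
\]
Each of the three maps on the right is part of the structure data of a family in the sense of Definition \ref{def:familyofunramifiedmaps}, so the composition is automatically a well-defined morphism of schemes (or algebraic spaces).

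The next step is to verify functoriality under base change: for any morphism $B' \to B$, the pullback family has section $\sigma_{i}' = \sigma_{i} \times_{B} B'$, structure map $f' = f \times_{B} B'$, and the same $\pi_{W/X}$ after pulling back $W$. Thus the composition defining $ev_{i}$ commutes with base change, so $ev_{i}$ descends to a morphism of fibered categories $\overline{\cU}_{g,n}(X,\beta) \to X$. Since $X$ is a scheme (viewed as a stack), this is automatically a morphism of stacks.

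The only mild point worth checking is that the section $\sigma_{i}$ lands in the locus of $\cC$ where the composite is well-behaved, but this is immediate: marked points are smooth points of the fibers by convention, and by condition (4) of Definition \ref{def:unramifiedstablemap}, $f$ sends smooth points of $C$ to smooth points of $W$, so $f \circ \sigma_{i}$ is a morphism into $W^{sm}$; post-composing with the regular morphism $\pi_{W/X}$ presents no issue. There is no serious obstacle here — the proposition really is a formal consequence of the structure data packaged into a family — so the proof amounts to writing down the composition and citing base-change compatibility.
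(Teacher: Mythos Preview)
Your proof is correct, but the paper takes a different route: it defines $ev_{i}$ as the composite $e_{i}\circ S$, where $S:\overline{\cU}_{g,n}(X,\beta)\to\overline{\cM}_{g,n}(X,\beta)$ is the stabilization map of Proposition~\ref{prop:stabilization} and $e_{i}$ is the usual evaluation map on the moduli of stable maps. Your argument is more direct and self-contained, reading $ev_{i}$ straight off the family data as $\pi_{W/X}\circ f\circ\sigma_{i}$ without passing through $\overline{\cM}_{g,n}(X,\beta)$; the paper's route, on the other hand, makes the compatibility $ev_{i}=e_{i}\circ S$ automatic rather than something to be checked afterward. The two definitions agree because stabilization contracts only components away from the marked points and does not change the image of $\sigma_{i}$ in $X$. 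Your remark about smooth points is harmless but unnecessary: the composite is a morphism of schemes regardless of where $f\circ\sigma_{i}$ lands in $W$.
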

\begin{proof}
Indeed $ev_{i} = e_{i} \circ S : \overline{\mathcal{U}}_{g,n}(X, \beta) \to \overline{\mathcal{M}}_{g,n}(X, \beta) \to X$ where $e_{i}$ be the $i$-th evaluation map for the ordinary moduli space of stable maps. 
\end{proof}

\begin{proposition}\label{prop:forgetful}
For any $T \subset [n]$, there is a functorial morphism
\[
	F : \overline{\cU}_{g,n}(X, \beta) \to
	\overline{\cU}_{g,T}(X, \beta)
\]
obtained by forgetting all marked points with indices in $[n]-T$ and stabilizing. 
\end{proposition}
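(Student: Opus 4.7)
The plan is to reduce by induction on $n - |T|$ to the case where $T = [n] \setminus \{n\}$, i.e., where we forget a single marked point. Granting this reduction, I would construct $F$ as the composition of three functorial operations: forgetting $\sigma_n$, contracting any resulting unstable components of the FM degeneration space $W$, and contracting any resulting unstable components of the source curve $\cC$.

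Given a $B$-family
\[
((\pi : \cC \to B, \sigma_1, \ldots, \sigma_n), (\pi_{W/B}: W \to B, \pi_{W/X}: W \to X), f: \cC \to W),
\]
I first simply drop $\sigma_n$ from the data. Conditions (1)--(7) of Definition \ref{def:unramifiedstablemap} and condition (iv) of Definition \ref{def:familyofunramifiedmaps} are preserved verbatim, since they involve no reference to the collection of marked points beyond their indexing. The only condition that can fail is the finiteness of automorphisms, condition (8). By the characterization given just before Lemma \ref{lem:unifamilycontraction}, failure occurs exactly when $\sigma_n$ was the unique witness of stability for some component of $W$: either an end component $Y$ on which $\sigma_n$ was the sole marked point and to which no component of $\cC$ maps with degree $\ge 2$, or a ruled component $Z$ on which $\sigma_n$ was the sole marked point and on which no component of $\cC$ maps transversely to its ruling.

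In the second step I contract such unstable components of $W$ to form a new FM degeneration $W'$ of $X$. In families, this is handled by Lemma \ref{lem:unifamilycontraction}: the given $W \to B$ is étale-locally the pull-back of a universal family $X[m]^+ \to X[m]$, and the contraction corresponds to composing the classifying map with a forgetful $X[m] \to X[m']$ killing exactly the levels of the destabilized screens. The resulting $W'$ is then étale-locally pulled back from $X[m']^+$, hence is again an FM degeneration space in the sense of the excerpt. Simultaneously, I contract on $\cC$ those components whose image lies in the contracted part of $W$ and which have become unstable upon forgetting $\sigma_n$; this can be carried out by the relative log minimal model program for $\omega_{\cC/B} + \sum_{i \ne n} \sigma_i$ on the appropriate fibers, mirroring the classical construction of the forgetful morphism for ordinary stable maps (cf.\ the $S$ of Proposition \ref{prop:stabilization}). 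This yields $\cC' \to B$ with $n-1$ sections and a naturally induced map $f': \cC' \to W'$.

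The main obstacle I expect is verifying that $f'$ still satisfies the local node condition (7) of Definition \ref{def:unramifiedstablemap} and its family analogue (iv) of Definition \ref{def:familyofunramifiedmaps}. At a node of $\cC'$ that arises from pinching a contracted rational tail mapping to a screen being collapsed, one must check that the local model $z_1 z_2 = t$, $\hat{f}^*(z_1) = \alpha_1 x^m$, $\hat{f}^*(z_2) = \alpha_2 y^m$ persists; this reduces to tracking how the smoothing parameter $t$ on $W$ transforms when a screen of $W$ is contracted, and to confirming that the new contact order $m$ is the one inherited (or summed, when several nodes collide into one) from the original data, so that $f'$ remains unramified on $(\cC')^{sm}$. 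Once this local verification is completed, the functoriality of each of the three operations---omission of $\sigma_n$, contraction of $W$ via Lemma \ref{lem:unifamilycontraction}, and the relative MMP on $\cC$---is standard, and their composition defines the required morphism $F : \overline{\cU}_{g,n}(X,\beta) \to \overline{\cU}_{g,T}(X,\beta)$.
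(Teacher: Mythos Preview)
Your approach is the paper's: reduce to dropping one point, then contract destabilized screens of $W$ via Lemma~\ref{lem:unifamilycontraction} and run relative MMP on $\cC$. Two corrections are needed. First, the destabilized end-component case is mischaracterized: if $\sigma_n$ were the \emph{sole} marked point on $Y$ with no degree $\ge 2$ image there, the original map would already be unstable; the actual case is that $Y$ carried exactly two marked points $\sigma_n, \sigma_k$ and every domain component over $Y$ maps to a line.

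Second, and more importantly, your guess that contact orders might be ``summed'' when nodes collide is wrong, and this is exactly where the verification of condition~(7) has content. In the end-component case nothing needs checking: the domain components over $Y$ are tails, so after contracting them no relevant node remains. In the ruled-component case, each $D_j$ mapping into a ruling of $Z$ is a $\PP^1$ with exactly two nodes, and the induced cover $D_j \to \PP^1$ is ramified only over those two points; by Riemann--Hurwitz the two ramification indices are \emph{equal}, say $m$. After contracting $Z$ and $D_j$, the neighbouring components meet at a single node with matching contact order $m$ on both branches, so condition~(7) persists. This equality of ramification indices is the missing idea your sketch leaves open.
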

\begin{proof}
It suffices to show the existence of $F : \overline{\cU}_{g,n}(X, \beta) \to \overline{\cU}_{g,n-1}(X, \beta)$ which forgets the last marked point. For a family 
\[
	((\pi : \cC \to B, \sigma_{1}, \cdots, \sigma_{n}), (\pi_{W/B} : W \to B, 
	\pi_{W/X}: W \to X), f : \cC \to W)
\]
of $n$-pointed stable unramified maps over $B$, if we forget the last section $\sigma_{n}$, then the remaining collection of data
\begin{equation}\label{eqn:familyunramifiedstablemaps}
	((\pi : \cC \to B, \sigma_{1}, \cdots, \sigma_{n-1}), 
	(\pi_{W/B} : W \to B, 
	\pi_{W/X}: W \to X), f : \cC \to W)
\end{equation}
is also a family of $(n-1)$-pointed unramified stable maps unless 
\begin{enumerate}
\item For a fiber of $b \in B$, there is an end component $Y$ of $W_{b}$ such that for every components $D_{i}$ of $\cC_{b}$ maps to $Y$, $D_{i}$ is a rational curve maps to a line injectively, and there are exactly two marked points $\sigma_{n}(b)$ and $\sigma_{k}(b)$ lie on $\cup D_{i}$ or;
\item For a fiber of $b \in B$, there is a ruled component $Z$ of $W_{b}$ such that for every components $D_{j}$ of $\cC_{b}$ maps to $Z$, the image of $D_{j}$ is a ruling and only $\sigma_{n}(b)$ lies on $\cup D_{j}$. Note that $D_{j}$ is a rational curve, because it is a ramified cover of $\PP^{1}$ which has exactly two branch points. 
\end{enumerate}
Note that only one of these two cases may happen on a fiber. 

We can stabilize the family \eqref{eqn:familyunramifiedstablemaps} in the following way. Suppose that \'etale locally, the target space $\pi_{W/B} : W \to B$ comes from the Cartesian diagram
\[
	\xymatrix{W|_{B'} \ar[r] \ar[d] & X[m]^{+}\ar[d]\\
	B' \ar[r] & X[m]}
\]
for some $m > 0$ and an \'etale map $B' \to B$. We will modify the family locally, so for simplicity, we may assume that there is a unique connected closed subset $U \subset T$ such that for $b \in U$, the fiber has an end component $Y$ of $W_{b}$ with property (1). Also, we may assume that there is a unique connected closed subset $V \subset T$ such that for $b \in V$, there is a rule component $Z$ of $W_{b}$ with property (2). Over $U$ (resp. $V$), the non-stable end components (resp. ruled components) form a family of irreducible components of $W|_{U}$ (resp. $W|_{V}$). 

Let $\tau_{1}, \tau_{2}, \cdots, \tau_{m} : B' \to W|_{B}$ be the pull-back of universal sections $\sigma_{1}, \sigma_{2}, \cdots, \sigma_{m} : X[m] \to X[m]^{+}$. Let $I \subset [m]$ be the index set of sections such that $i \in I$ if and only if $\tau_{i}$ is on the non-stable end component. Pick any $j \in I$ and let $J := I - \{j\}$. Now we have a forgetting map $X[m] \to X[m - |J|]$ forgetting all section in $J$. There is also a contraction map $X[m]^{+} \to X[m-|J|]^{+}$ on the universal family by Lemma \ref{lem:unifamilycontraction}. Take the pull-back of the universal family $X[m-|J|]^{+} \to X[m-|J|]$ by $B' \to X[m] \to X[m-|J|]$. Then we have a family $W'|_{B'} \to B'$ of FM degeneration spaces and there is a morphism $W|_{B'} \to W'|_{B'}$.
\[
	\xymatrix{\cC|_{B'} \ar[r]^{f} \ar[rrd] \ar[rdd] & 
	W|_{B'} \ar[dd]\ar[rr] \ar[rd] && 
	X[m]^{+}\ar[rd]\ar[dd]\\
	&& W'|_{B'} \ar[dd] \ar[rr] && X[m-|J|]^{+} \ar[dd]\\
	& B' \ar@{=}[rd] \ar[rr] && X[m]\ar[rd]\\
	&& B' \ar[rr] && X[m-|J|]}
\]
Now there are several irreducible components of $\cC_{b}$ for $b \in V$, which are all tails, such that $f : \cC|_{B'} \to W|_{B'} \to W'|_{B'}$ is not finite. By using the standard stabilizing of the domain curve (running the relative MMP over $W|_{B'}$ for $(\cC|_{B'}, \omega_{C/B'}+\sum \sigma_{i})$), we can contract these irreducible components. 

After performing this procedure finite times, we can remove all non-stable end components and getting new family of maps $\overline{\cC}|_{B'} \to W'|_{B'}$. Note that this procedure does not depend on the choice of $m$, $B' \to X[m]$ and $J \subset [m]$. We may replace $\cC|_{B'}$ by $\overline{\cC}|_{B'}$ and $W|_{B'}$ by $W'|_{B'}$ for a notational convenience. 

The contraction of a non-stable ruled component in (2) is similar. Take $K \subset [m]$ such that $i \in K$ if and only if $\tau_{i}$ is on the non-stable ruled component. Take the forgetting map $X[m] \to X[m- |K|]$. Bu taking the pull-back of the universal family $X[m-|K|]^{+} \to X[m-|K|]$, we have a family $W''|_{B'} \to B'$, and a $B'$-morphism $W|_{B'} \to W''|_{B'}$. By contracting all non-finite components using standard relative MMP technique, we obtain a family of finite maps $\overline{\cC}|_{B'} \to W''|_{B'}$ over $B'$. 

We claim that the result is a family of unramified stable maps. Except (7) on Definition \ref{def:unramifiedstablemap}, all other conditions are simple observations of contracting procedures. If we contract a non-stable end component $Y$ of the target, because we contract all irreducible components on the domain whose image lie on $Y$, there is no relevant singular points on the domain anymore. Furthermore, if we contract a non-stable ruled component $Z$ of the target, then an irreducible component $C_{i}$ of the domain maps to $Z$ has only two ramification points at two singular points of the domain on $C_{i}$. Moreover, since $C_{i} \cong \PP^{1}$, the ramification indices at two singular points are equal. Thus after the contraction of the component, the stabilized map has the property (7). 
\end{proof}

\begin{proposition}\label{prop:mortotangent}
Let $X$ be a smooth projective variety. Then there is a morphism 
\[
	T: \overline{\cU}_{g,n}(X, \beta) \to 
	\bigsqcup_{\beta' \in A_{1}(\PP(TX), \ZZ)}\overline{\cM}_{g,n}(\PP(TX), \beta')
\]
where $\PP(TX)$ be the projectivized tangent bundle of $X$.
\end{proposition}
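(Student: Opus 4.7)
The plan is to construct $T$ by a Gauss-map-type procedure: use the unramified hypothesis on $f:C\to W$ to lift the composed map $h=\pi_{W/X}\circ f:C\to X$ to a morphism $g:C\to \PP(TX)$ covering $h$, and then stabilize. On a smooth point $c$ lying on a non-ghost irreducible component of $C$, the differential $dh_c$ is injective, so I would set $g(c)=dh_c(T_cC)\in \PP(T_{h(c)}X)$. This prescribes $g$ on an open dense subset of $C$.

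The main work is extending $g$ across ghost components of $C$, those whose image lies entirely in a screen $Y$ of $W$. A level-$1$ screen $Y$ attached at $x\in X$ is canonically identified with $\PP(T_xX\oplus\CC)$, and this compactified tangent space carries a rational projection $p_Y:Y\dashrightarrow \PP(T_xX)\hookrightarrow \PP(TX)|_x\subset\PP(TX)$, undefined only at the distinguished point $[0{:}\cdots{:}0{:}1]$. For a ghost component $C'\subset C$ mapping to $Y$, I would take $g|_{C'}=p_Y\circ f|_{C'}$, extending over the preimages of the bad point by blowing up $C'$ and using the tangent direction of $f(C')$ (well-defined by the unramified hypothesis). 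For higher-level screens, which are themselves compactified tangent spaces attached to points in lower-level screens, I would iterate this projection construction recursively. The compatibility of the two definitions at a node $p\in C$ lying over a singular point of $W$ follows from the FM identification: the exceptional divisor $\PP(T_xX)\subset\widetilde{X}_0$ of the root component coincides canonically with the hyperplane at infinity of the attached screen $Y$, and $dh_p$ on the root branch of $C$ picks out exactly the line in $T_xX$ represented by the point $f(p)$, matching $p_Y(f(p))$ computed on the screen branch.

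Once $g:\widetilde{C}\to \PP(TX)$ is constructed on a suitable modification $\widetilde{C}$ of $C$, I would stabilize by running relative MMP for $\omega_{\widetilde{C}}+\sum\sigma_i$ over $\PP(TX)$, exactly as in the proof of Proposition~\ref{prop:stabilization}, to obtain a genuine stable map $\bar{g}:\overline{C}\to\PP(TX)$ whose class $\beta'\in A_1(\PP(TX),\ZZ)$ records the degrees of the various contributions (the Gauss lift of $h$ on the non-ghost part and the screen contributions on the ghost part). To upgrade this to a morphism of stacks, I would carry out the construction étale-locally using the description of $W$ as the pullback of the universal family $X[n]^+\to X[n]$, so that the procedure is canonically defined on any base $B$ and commutes with base change.

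The hardest step will be verifying that the local extensions on ghost components fit together functorially across families in which the FM degeneration itself degenerates (new screens being born or existing ones collapsing as one moves in $X[n]$). This amounts to checking that the Gauss-map construction commutes with the iterated blow-ups defining $X[n]^+\to X[n]$, and with the screen-by-screen identification of the projective bundle $\PP(TX)$. A secondary technical point is controlling when the preliminary map $g$ fails to be finite on some components, so that the stabilization in Step 3 produces a map of a well-defined and locally constant class $\beta'$ in families.
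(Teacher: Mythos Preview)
Your overall strategy---lift via a Gauss map, then stabilize---matches the paper's, which simply cites \cite[Lemma 3.2.4]{KKO14} for the existence of a unique extension $\tilde f:\cC\to\PP(TX)$ of the projectivized tangent map $\PP(T(\pi_{W/X}\circ f))$ defined on $\cC^{sm}$, and then stabilizes. However, the explicit formula you propose on ghost components is not the right one, and this is a genuine gap rather than a cosmetic difference.

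On a level-one screen $Y=\PP(T_xX\oplus\CC)$ you set $g|_{C'}=p_Y\circ f|_{C'}$, the linear projection from the point $[0{:}\cdots{:}0{:}1]$. The correct extension (described in the Remark following the Proposition) is instead
\[
p\;\longmapsto\; \big(\text{embedded tangent line of }f(C')\text{ at }f(p)\big)\;\cap\;\PP(T_xX),
\]
which records the \emph{tangent direction} of the ghost curve under the canonical identification $T_y(T_xX)\cong T_xX$, not the position of $f(p)$. Your node-compatibility check happens to succeed because at a node both recipes return the point $e=f(p)\in\PP(T_xX)$, but away from nodes the two maps differ, and yours is not the limit of the Gauss map in families. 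For a concrete failure, take $X=\mathbb{A}^2$ and the one-parameter family $f_t(s)=(t(s+1),\,ts^2)$. For $t\ne 0$ the Gauss map is $s\mapsto[1:2s]$, a degree-one map to $\PP^1$. The limiting ghost curve on the screen is $s\mapsto[s{+}1:s^2:1]$, which does \emph{not} pass through $[0{:}0{:}1]$; your projection gives $s\mapsto[s{+}1:s^2]$, a degree-two map, whereas the tangent-line construction gives $s\mapsto[1:2s]$, matching the limit. Thus your $T$ is discontinuous across this wall and does not define a morphism of stacks. (This is also why the paper's degree formula for a degree-$d$ rational normal ghost curve is $2d-2$, the degree of the projected tangent developable, rather than $d$ or $d-1$ as a point projection would give.)

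The fix is exactly what KKO supply: replace $p_Y\circ f$ by the projectivized differential of $f$ itself, transported to $\PP(T_xX)$ via the affine structure of the screen; equivalently, intersect the embedded tangent line with the hyperplane at infinity. Once you do that, the pieces glue in families (this is the content of the cited lemma), and the rest of your outline---stabilizing the domain to land in $\overline{\cM}_{g,n}(\PP(TX),\beta')$---goes through.
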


\begin{proof}
This is a direct consequence of \cite[Lemma 3.2.4]{KKO14}. For a family 
\[
	((\pi : \cC \to B, \sigma_{1}, \cdots, \sigma_{n}), (\pi_{W/B} : W \to B,
	\pi_{W/X} : W \to X), f : \cC \to W),
\]
we have a family of maps $\tilde{f}: \cC \to \PP(TX)$, which is a unique extension of the projectivized tangent map $\PP(Tf) : \cC^{sm} \to \PP(TX)$. By stabilizing the domain as usual, we obtain a family of stable maps $\bar{f} : \overline{\cC} \to \PP(TX)$. 
\end{proof}

\begin{remark}
For a ghost component $C'$ of the domain $C$, the map $\PP(Tf) : C' \to \PP(TX)$ can be described in the following way. Each screen (after blowing down all higher level screens) is identified with $\PP(T_{x}X\oplus \CC)$ for some $x \in X$. For a smooth point $p \in C'$, $\PP(Tf)(p) = T_{p}C' \cap \PP(T_{x}X)$, where $\PP(T_{x}X) \subset \PP(T_{x}X \oplus \CC)$ is the `hyperplane at infinity'. Therefore it is a projection of the tangent variety of $C'$. If $C'$ is a rational normal curve of degree $d$ in $\PP^{r}$ with $r \ge d$, then $\deg \PP(Tf)(C') = 2d - 2$ (\cite[245p.]{Har95}). 
\end{remark}

\begin{example}
If $X = \PP^{d}$, then the Chow ring of $\PP(T\PP^{d})$ is 
\[
	A^{*}(\PP(T\PP^{r}), \ZZ) \cong \ZZ[H, \zeta]/
	\left\langle H^{d+1}, 
	\sum_{i=0}^{d}{d+1\choose i}H^{i}\zeta^{d-i}\right\rangle
\]
where $H$ is the pull-back of hyperplane class $h$ in $\PP^{d}$ and $\zeta = c_{1}(\cO_{\PP(T\PP^{d})}(1))$.

We claim that for the connected component of $\cUznpdd$ containing smooth rational normal curves in $\PP^{d}$, $\beta'$ in Proposition \ref{prop:mortotangent} is $dH^{d-1}\zeta^{d-1}+(d+2)(d-1)H^{d}\zeta^{d-2}$ if $d \ge 2$. First of all, $\deg H^{d}\zeta^{d-1} = 1$. From the combination of two relations, we can deduce $H^{d-1}\zeta^{d}+(d+1)H^{d}\zeta^{d-1} = 0$ so $\deg H^{d-1}\zeta^{d} = -(d+1)$. Since $H^{d-1}\zeta^{d-1}$ and $H^{d}\zeta^{d-2}$ form a basis of $A_{1}(\PP(T\PP^{d}), \ZZ)$, $\beta'$ is a linear combination of them. For a stable unramified map $f : C \to \PP^{d}$ where $f(C)$ is a smooth rational curve of degree $d$ in $\PP^{d}$, $T(f)(C) = \PP(TC) \subset \PP(T\PP^{d})$, thus the restriction of the tautological subbundle to $T(f)(C)$ is $TC \cong \cO_{\PP^{1}}(2)$. Hence $T(f)(C) \cdot \zeta = -2$. On the other hand, from the projection formula $T(f)(C) \cdot H = f(C) \cdot h = d$. Therefore from a simple calculation, we obtain $\beta' = dH^{d-1}\zeta^{d-1} + (d+2)(d-1)H^{d}\zeta^{d-2}$.

From now, in this paper we denote $aH^{d-1}\zeta^{d-1} + bH^{d}\zeta^{d-2}$ by \textbf{$\mathbf{(a,b)}$-class}. 
\end{example}

\subsection{Deformation theory}\label{ssec:deformation}

The dimensions of the deformation and obstruction spaces of $\overline{\cU}_{g,n}(X, \beta)$ can by computed indirectly by using Olsson's deformation theory of log schemes (\cite{Ols05}). For a family 
\[
	((\pi : \cC \to B, \sigma_{1}, \cdots, \sigma_{n}), 
	(\pi_{W/B} : W \to B, \pi_{W/X} : W \to X), f : \cC \to W)
\]
of $n$-pointed stable unramified maps over $B$, we can introduce natural log structures $M^{\cC/B}$ on $\cC$, $M^{W/B}$ on $W$, and $N^{\cC/B}$ and $N^{W/B}$ on $B$ such that $(\cC, M^{\cC/B}) \to (B, N^{\cC/B})$ and $(W, M^{W/B}) \to (B, N^{W/B})$ are log smooth morphisms. We obtain a canonical log structure $N$ on $B$ by taking monoid push-out $N^{\cC/B}\oplus_{N'}N^{W/B}$ where $N'$ is the submonoid of $N^{\cC/B}\oplus N^{W/B}$ generated by $(m\cdot \log t', \log t)$ for each nodal point of $\cC$ (for the definition of $m, t, t'$, see Definition \ref{def:familyofunramifiedmaps}.). 

We have a stack $\cB$ of $n$-pointed prestable curves, FM degeneration spaces with $n$ distinct smooth points, fine log schemes, and pairs of morphisms of log structures
\[
	((\cC \to B, (\sigma_{1}, \cdots, \sigma_{n})), 
	(W \to B, (\tau_{1},  \cdots, \tau_{n})),
	(B, N), N^{\cC/B} \to N, N^{W/B} \to N).
\]
The relative tangent/obstruction spaces for $\overline{\cU}_{g,n}(X, \beta) \to \cB$ are described by cohomology groups. Suppose that $B = \spec R$ for a Noetherian $\CC$-algebra $R$ and $\widetilde{R}$ is a square-zero extension of $R$ by $I$. Let $\widetilde{B} = \spec \widetilde{R}$. Also suppose that $\widetilde{\cC}$ (resp. $\widetilde{W}$) is an extension of $\cC$ (resp. $W$) over $\widetilde{B}$. Let $\widetilde{N}$ be the extension of $N$ over $\widetilde{B}$ with two extensions $N^{\widetilde{\cC}/\widetilde{B}}\to \widetilde{N}$ and $N^{\widetilde{W}/\widetilde{B}} \to \widetilde{N}$. Then the obstruction for a compatible extension of a stable unramified map is an element of $H^{1}(\cC, f^{*}T^{\dagger}_{W}(-\sum \sigma_{i})\otimes I)$ and if the obstruction vanishes, the compatible extensions identified with $H^{0}(\cC, f^{*}T^{\dagger}_{W}(-\sum \sigma_{i})\otimes I)$ (\cite[Proposition 5.1.1]{KKO14}). Here $T_{W}^{\dagger}$ means the log tangent sheaf. 

On the other hand, there is a log version of moduli space of stable log maps $\overline{\cU}_{g,n}^{log}(X, \beta)$, constructed in \cite{Kim10}. There is a commutative diagram 
\[
	\xymatrix{\overline{\cU}_{g,n}^{log}(X, \beta) \ar[d]_{\phi}\ar[rd]
	\\ \overline{\cU}_{g,n}(X, \beta) \ar[r] & \cB}
\]
where $\phi$ is a virtual normalization map (\cite{LM12}). $\phi$ is finite and degree one. 

Let $B^{\dagger}$ be the log scheme $(B, N)$. Let $\cC^{\dagger}$ be the minimal log curve induced by $N^{\cC/B} \to N$ (\cite[3.5]{Kim10}) and let $W^{\dagger}$ be the semi-stable log scheme induced by $N^{W/B} \to N$ (\cite[4.3]{Kim10}). Let $\mathrm{Aut}_{I}(\cC^{\dagger}\times_{B^{\dagger}}W^{\dagger})$ be the set of automorphisms of the trivial extensions of $\cC^{\dagger}\times_{B^{\dagger}}W^{\dagger}$ over $\spec (\widetilde{R}, \widetilde{N})$, whose restriction to $B^{\dagger}$ is the identity. And let $\mathrm{Def}_{I}(\cC^{\dagger}\times_{B^{\dagger}}W^{\dagger})$ be the set of isomorphism classes of $I$-extensions of log schemes over $B^{\dagger}$.There is an $R$-module exact sequence 
\[
	0 \to \mathrm{Aut}_{I}(\cC^{\dagger}\times_{B^{\dagger}}W^{\dagger})
	\to \mathrm{RelDef}(f) = H^{0}(\cC, f^{*}T_{W^{\dagger}/B^{\dagger}}(-\sum \sigma_{i})\otimes_{\cO_{B}}I) \to \mathrm{Def}(f)
\]
\[
	\to \mathrm{Def}_{I}(\cC^{\dagger}\times_{B^{\dagger}}W^{\dagger})
	\to \mathrm{RelOb}(f) = H^{1}(\cC, f^{*}T_{W^{\dagger}/B^{\dagger}}(-\sum \sigma_{i})\otimes_{\cO_{B}}I) \to \mathrm{Obs}(f) \to 0
\]
(\cite[Section 7.1]{Kim10}). 

Now consider $B = \spec \CC$ case. If $H^{1}(C, f^{*}T_{W}^{\dagger}(-\sum \sigma_{i})) = 0$, then $\phi$ is a local isomorphism, thus $\mathrm{RelOb}(f) = 0$ as well. Also $\mathrm{Obs}(f) = 0$ hence both $\overline{\cU}_{g,n}^{log}(X, \beta)$ and $\overline{\cU}_{g,n}(X, \beta)$ are smooth. Thus we have:

\begin{lemma}\label{lem:smoothstack}
Let $((C, x_{1}, x_{2}, \cdots, x_{n}), \pi_{W/X} : W \to X, f : C \to W)$ be a stable unramified map over $\spec \CC$. If $H^{1}(C, f^{*}T^{\dagger}_{W}(-\sum \sigma_{i})) = 0$, then $\overline{\cU}_{g,n}(X, \beta)$ is smooth at the point.
\end{lemma}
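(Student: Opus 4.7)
The proof plan is short: essentially the lemma just packages the discussion in Section 5.1 into a formal statement, so the strategy is to trace the two implications already alluded to there and record which one uses which input.

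First I would specialize the general framework of Section 5.1 to $B = \spec \CC$ and $I = \CC$, so that the relative obstruction space becomes exactly
\[
\mathrm{RelOb}(f) = H^{1}(C, f^{*}T_{W^\dagger/B^\dagger}^{\dagger}(-\textstyle\sum \sigma_{i})) = H^{1}(C, f^{*}T^{\dagger}_{W}(-\textstyle\sum \sigma_{i})),
\]
which vanishes by hypothesis. Feeding this into the six-term $R$-module exact sequence
\[
\cdots \to \mathrm{Def}_{I}(\cC^{\dagger}\times_{B^{\dagger}}W^{\dagger}) \to \mathrm{RelOb}(f) \to \mathrm{Obs}(f) \to 0
\]
from \cite{Kim10}, the surjection at the end forces $\mathrm{Obs}(f) = 0$. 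Hence the log moduli stack $\overline{\cU}_{g,n}^{log}(X,\beta)$ has vanishing obstruction at the point in question, and is therefore smooth there (the base stack $\cB$ of prestable curves together with FM spaces and compatible log structures being smooth by construction).

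Second I would transfer smoothness across the virtual normalization map $\phi : \overline{\cU}_{g,n}^{log}(X,\beta) \to \overline{\cU}_{g,n}(X,\beta)$. As recalled in Section 5.1, $\phi$ is finite of degree one, and the vanishing of $\mathrm{RelOb}(f)$ precisely guarantees that $\phi$ is a local isomorphism at our point. Since the source is smooth there by the previous step, so is the target, giving the lemma.

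The only conceivable subtlety is the justification that $\phi$ is a local isomorphism under the vanishing hypothesis; but this is exactly the content of the general result of \cite{LM12} applied to the virtual normalization, and is precisely the input we have already been granted in the setup preceding the statement. No nontrivial calculation is required beyond checking that the relevant cohomology group is identified with $\mathrm{RelOb}(f)$ via Olsson's deformation theory for the log morphism $(\cC, M^{\cC/B}) \to (W, M^{W/B})$, which is already quoted from \cite[Proposition 5.1.1]{KKO14}.
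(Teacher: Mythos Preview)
Your proposal is correct and follows essentially the same route as the paper: the lemma is just a restatement of the preceding discussion, and you have faithfully unpacked it by specializing to $B=\spec\CC$, reading off $\mathrm{RelOb}(f)=H^{1}(C,f^{*}T_{W}^{\dagger}(-\sum\sigma_{i}))=0$, feeding this into the six-term sequence from \cite{Kim10} to get $\mathrm{Obs}(f)=0$ and hence smoothness of $\overline{\cU}_{g,n}^{log}(X,\beta)$, and then transferring via the local isomorphism $\phi$ (guaranteed by the same vanishing) to $\overline{\cU}_{g,n}(X,\beta)$. The only cosmetic difference is ordering: the paper first notes that the vanishing makes $\phi$ a local isomorphism and then deduces $\mathrm{RelOb}(f)=0$ on the log side, whereas you go straight to $\mathrm{Obs}(f)=0$ and invoke the local isomorphism at the end; the content is identical.
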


\section{$\Mzv^{3}$ as a parameter space}\label{sec:modular}

In this section, we discuss a moduli theoretic interpretation of $\Mzv^{3}$, the first flip of $\Mzv$. 

In a recent result \cite{Smy13}, Smyth described a systematic classification of modular compactifications $\overline{\cM}_{g,n}(\mathcal{Z})$ of $\cM_{g,n}$, which can be described in term of certain combinatorial data $\mathcal{Z}$. They are moduli spaces of pointed curves with (possibly) worse singularities. In the case of $g = 0$, he obtained a complete classification of such compactifications (\cite[Theorem 1.21]{Smy13}). When $g = 0$, all such compactifications are obtained by contracting some irreducible components of parameterized curves and obtaining new arithmetic genus 0 singularities there. Because a singularity of arithmetic genus 0 does not have a positive dimensional moduli, all such compactifications are (usually small) contractions of $\Mzn$. Therefore if we want to describe a moduli theoretic meaning of a flip of $\Mzn$, then it must not be a moduli of pointed curves with a certain singularity type. In other words, it is not a substack of the stack of all pointed curves (\cite[Appendix B]{Smy13}). 

From the description of $\Mzv^{3}$, we have several clues on the possible moduli theoretic meaning of it.
\begin{enumerate}
\item The reduction map $\phi : \Mzv \to V_{A}^{3}$ contracts F-curves of type $F_{1,2,2,2}$. The image of a contracted F-curve corresponds to a pointed rational curve $(C, x_{1}, x_{2}, \cdots, x_{7})$ which has three irreducible components and they meet at a triple nodal singularity. $\phi$ forgets the cross-ratio of four special points on the spine of $F_{1,2,2,2}$. 
\item A connected component of the exceptional fiber of the contraction $\phi_{3}': \Mzv^{3} \to V_{A}^{3}$ is isomorphic to $\PP^{2}$. 
\end{enumerate}

Note that the image of $F_{1,2,2,2}$ is exactly the locus of non-nodal (non-Gorenstein as well) curves on $V_{A}^{3}$ (See Example \ref{ex:Veronesequotient}.). From (2), we may guess that $\Mzv^{3}$ is a moduli space of pointed curves parameterized by $V_{A}^{3}$, with some additional structure on non-Gorenstein singularities. 

\begin{question}\label{que:infinitesimalstructure}
What kind of infinitesimal structure can we give on non-Gorenstein singularities?
\end{question}

Note that $V_{A}^{3}$ is defined as a GIT quotient of an incidence variety in the product $\Mzzptt \times (\PP^{3})^{7}$. At least as parameter spaces in a weak sense, we are able to construct many new birational models of $\Mzv$ by using incidence varieties. For example, if we introduce additional factors such as $\mathbb{G}\mathrm{r}(1,3)^{7}$ which has the information about a tangent direction at each point, and take the GIT quotient (with an appropriate linearization) of the incidence variety in 
\[
	\Mzzptt \times (\PP^{3})^{7} \times \mathbb{G}\mathrm{r}(1,3)^{7},
\]
then we may have a resolution of $V_{A}^{3}$. Also we may replace a factor by another modular variety. For instance it would be interesting if we consider the Fulton-MacPherson space $\PP^{3}[7]$ instead of $(\PP^{3})^{7}$. But in our situation, we need to find a parameter space which does fit into the picture of Mori's program for $\Mzv$. Thus a refined question is the following:

\begin{question}\label{que:fitintodiagram}
Which of them does fit into the diagram $\phi_{3}': \Mzv^{3} \to V_{A}^{3}$?
\end{question}

To answer this question, we will use KKO compactification we have discussed in Section \ref{sec:KKOcompactification}. 

Let $\cUznpdd$ be the KKO compactification of the space of $n$-pointed rational normal curves in $\PP^{d}$ and let $\Uznpdd$ be its coarse moduli space. Similarly, let $\cMznpdd$ be the moduli stack of ordinary stable maps and $\Mznpdd$ be its coarse moduli space. We have the following commutative diagram:
\[
	\xymatrix{\Uzvptt \ar[r]^(0.4){F'} \ar[d]_{S} & 
	\Uzzptt \times (\PP^{3})^{7} \ar[d]^{S'}\\
	\Mzvptt \ar[r]^(0.4){F} & \Mzzptt \times (\PP^{3})^{7}}.
\]
The vertical map $S$ is the stabilization map $S$ in Proposition \ref{prop:stabilization}, and $S' = S \times \mathrm{id}$. $F$ is the product of a forgetful map and evaluation maps for the moduli space of stable maps, and $F' = F \times \prod ev_{i}$ is that of KKO compactifications (Proposition \ref{prop:forgetful} and Proposition \ref{prop:evaluation}).

Let $I \subset \Mzzptt \times (\PP^{3})^{7}$ be the incidence variety parameterizes $(f : C \to \PP^{3}, x_{1}, \cdots, x_{7})$ such that $x_{i} \in \im f$ for all $i$. It is straightforward to check that $I = \im \;\phi$. From the description of $V_{A}^{3}$ in Section \ref{ssec:Veronesequotients}, $V_{A}^{3} \cong I\git_{L} \SL_{4}$ with a suitable linearization $L$ which is a restriction of a linearized ample line bundle on $\Mzzptt \times (\PP^{3})^{7}$. Note that with respect to $L$, the stability coincides with the semi-stability. Let $I^{s}$ be the stable locus. 

Suppose that we have an incidence variety $J \subset \Uzzptt \times (\PP^{3})^{7}$. We would like to show that $J\git \SL_{4} \cong \Mzv^{3}$ for an appropriate choice of a linearization. The choice of the linearization is standard. For any $G$-equivariant projective morphism between two quasi-projective varieties $f : X \to Y$ and a linearization $L$ on $Y$ such that $Y^{ss}(L) = Y^{s}(L)$, there is a linearization $L'$ on $X$ such that 
\[
	X^{ss}(L') = X^{s}(L') = f^{-1}(Y^{s}(L))
\]
(\cite[Section 3]{Kir85}, \cite[Theorem 3.11]{Hu96}). With respect to this linearization, there is a quotient map $\overline{S} : J\git_{L'}\SL_{4} \to I\git_{L}\SL_{4} \cong V_{A}^{3}$. Thus if we carefully analyze the fiber of $\overline{S}$, then we may prove that $J\git_{L'}\SL_{4} \cong \Mzv^{3}$. 

But there are a few technical difficulties on this approach. Because the geometry of $\cUznprd$ is very complicate, there are few results on its geometric properties. For instance, $\cUznprd$ is not irreducible in general, the connectedness is unknown, and we don't know about the projectivity of its coarse moduli space $\Uznprd$ even for $n = 0$ and $r = d = 3$. Furthermore, we don't have a nice modular description nor the deformation theory for the `main component' of $\cUznprd$. So we are unable to apply the above standard approach. Thus here we will use an ad-hoc approach.

Let $\cMzzptt^{nd} \subset \cMzzptt$ be the substack of stable maps non-degenerated image and let $\Mzzptt^{nd} \subset \Mzzptt$ be its coarse moduli space. Since $(f : C \to \PP^{3}) \in \cMzzptt^{nd}$ has no nontrivial automorphism, $\cMzzptt^{nd} = \Mzzptt^{nd}$ is a smooth open subvariety of $\Mzzptt$. Let $\cUzzptt^{nd} := S^{-1}(\cMzzptt^{nd})$ for the stabilization map in Proposition \ref{prop:stabilization} and let $\Uzzptt^{nd}$ be its coarse moduli space. 

\begin{lemma}\label{lem:ndlocusissmooth}
The open subset $\Uzzptt^{nd} \subset \Uzzptt$ is a smooth algebraic space. 
\end{lemma}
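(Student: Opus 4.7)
The plan is two-fold: first establish that every object of $\cUzzptt^{nd}$ has trivial automorphism group, so that the stack coincides with its coarse moduli $\Uzzptt^{nd}$; then apply the deformation-theoretic criterion of Lemma \ref{lem:smoothstack} to conclude smoothness.

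For the triviality of automorphisms, let $(f : C \to W)$ be a $\CC$-point of $\cUzzptt^{nd}$, and consider a pair $(\sigma, \varphi) \in \mathrm{Aut}(C) \times \mathrm{Aut}(W/X)$ with $f \circ \sigma = \varphi \circ f$. The stabilized map $\bar{f} : \bar{C} \to X = \PP^{3}$ has non-degenerate image, hence is rigid in $\cMzzptt^{nd}$ (this is what the paper already records). So the induced automorphism on $\bar{C}$ is the identity, forcing $\sigma$ to act trivially on every non-contracted component of $C$. The remaining freedom lies on ghost components and in $\mathrm{Aut}(W/X)$, which decomposes screen by screen into factors $\CC^{r} \rtimes \CC^{*}$ (end screens) and $\CC^{*}$ (ruled screens). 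The compatibility $f \sigma = \varphi f$ together with the matching condition at nodes (condition (7) of Definition \ref{def:unramifiedstablemap}) and the stability requirements on end/ruled screens pin down $\varphi$ as the identity, and then $\sigma$ is also the identity on each ghost component.

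For smoothness, I apply Lemma \ref{lem:smoothstack}: since $n = 0$, it suffices to verify $H^{1}(C, f^{*}T_{W}^{\dagger}) = 0$ at every $\CC$-point. I work component by component on $C$. On each irreducible $C_{i} \cong \PP^{1}$, the restriction $f^{*}T_{W}^{\dagger}|_{C_{i}}$ is a rank-$3$ bundle, splitting as $\cO(a_{1}) \oplus \cO(a_{2}) \oplus \cO(a_{3})$. If $C_{i}$ maps to (the proper transform of) the root $\PP^{3}$, the log tangent there is $T_{\widetilde{\PP^{3}}}(-\log D)$ with $D$ the divisor of screen-attachments, and the non-degeneracy of $\bar{f}$ together with the degree of $f|_{C_{i}}$ gives each $a_{j} \geq -1$. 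If $C_{i}$ is a ghost component mapping into a screen $\PP(T_{x}\PP^{3} \oplus \CC)$ (with possible further blow-ups), the same estimate applies because the stability of the unramified map forces end screens to receive a curve of degree $\geq 2$ and ruled screens to receive a non-ruling. Thus $H^{1}(C_{i}, f^{*}T_{W}^{\dagger}|_{C_{i}}) = 0$ for every $i$, and the normalization sequence
\[
0 \to f^{*}T_{W}^{\dagger} \to \bigoplus_{i} (f^{*}T_{W}^{\dagger})|_{C_{i}} \to \bigoplus_{p \text{ node}} (f^{*}T_{W}^{\dagger})|_{p} \to 0
\]
combined with the tree structure of the dual graph of $C$ upgrades these local vanishings to the global $H^{1}(C, f^{*}T_{W}^{\dagger}) = 0$.

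The main obstacle I anticipate is the per-component degree calculation: it requires carefully accounting for the log-twist contributions at each attaching divisor of $W$ (which each subtract one degree unit) against the positive contributions from the degree of $f|_{C_{i}}$ and the unramifiedness multiplicity $m$ at each node. The stability hypotheses in Definition \ref{def:unramifiedstablemap} should exactly prevent the degenerate configurations in which the bound $a_{j} \geq -1$ would fail, so the key technical step is to verify that the per-screen stability translates into the required positivity in every combinatorial type of $(C, W)$ that can appear over the non-degenerate locus.
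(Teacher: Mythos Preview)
Your overall strategy matches the paper's: reduce to the stack via triviality of automorphisms, then invoke Lemma \ref{lem:smoothstack} by checking $H^{1}(C, f^{*}T_{W}^{\dagger}) = 0$ componentwise through the normalization sequence. But there is a genuine gap in the gluing step, and a significant difference in how the per-component calculation is organized.

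The gap is your claim that ``the tree structure of the dual graph of $C$ upgrades these local vanishings to the global $H^{1}(C, f^{*}T_{W}^{\dagger}) = 0$.'' This is not automatic. From the normalization sequence you wrote, vanishing of each $H^{1}(C_{i}, f^{*}T_{W}^{\dagger}|_{C_{i}})$ only reduces the question to surjectivity of
\[
\bigoplus_{i} H^{0}(C_{i}, f^{*}T_{W}^{\dagger}|_{C_{i}}) \to \bigoplus_{p \text{ node}} (f^{*}T_{W}^{\dagger})|_{p},
\]
and the tree shape of the dual graph does not by itself force this. You need to know, for enough components $C_{j}$, that sections on $C_{j}$ surject onto the fibers at the nodes of $C_{j}$, i.e.\ that $H^{1}(C_{j}, f_{j}^{*}T_{W}^{\dagger}(-\sum x_{k})) = 0$ for the relevant attaching points $x_{k}$. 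The paper proves exactly this stronger statement for the level-$0$ and level-$2$ components, via explicit twisted Euler sequences; your proposal omits this step entirely.

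On the per-component estimate, the paper does not attempt a uniform $a_{j} \geq -1$ bound for arbitrary combinatorial types. Instead it exploits that a non-degenerate degree-$3$ curve in $\PP^{3}$ has very few shapes: away from the locus where $S$ is a local isomorphism, the domain must have three degree-one tails and a ghost spine of one of three explicit types. For each type the paper writes down the log-Euler sequence on the relevant component of $W$ (root, end, intermediate screen), restricts, and reads off the vanishing directly. Your general approach could in principle work, but as you yourself flag, the bookkeeping of log twists versus degree and node multiplicities is where the content lies, and you have not carried it out; the paper sidesteps this by the finite case analysis.

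Finally, your automorphism argument is more elaborate than necessary: for a non-degenerate degree-$3$ unramified map in $\PP^{3}$, $f : C \to W$ is injective, and injectivity alone kills all automorphisms in one line.
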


\begin{proof}
First of all, we will show that $\cUzzptt^{nd}$ is a smooth stack. Because every object $(f : C \to W) \in \cUzzptt^{nd}$ is injective, it has no nontrivial automorphism. Thus $\cUzzptt^{nd} = \Uzzptt^{nd}$ and the latter one is also smooth as an algebraic space. 

Since $\cMzzptt$ is a smooth Deligne-Mumford stack, it suffices to check that the smoothness at a map $(f : C \to W) \in \cUzzptt^{nd}$ lying on the locus that $S : \cUzzptt^{nd} \to \cMzzptt$ is not an isomorphism. If the target space $W$ is $\PP^{3}$ then there is no ghost component and hence $(f : C \to W = \PP^{3})$ is already an object in $\cMzzptt^{nd}$. Since $\pi \circ f(C)$ is degenerated in $\PP^{3}$, for any screen (after blowing-down all higher level screens) $Y \cong \PP(T_{x}\PP^{3}\oplus \CC)$, $f(C) \cap \PP(T_{x}\PP^{3})$ is a union of reduced points. If there is an end component $Y \cong \PP(T_{x}\PP^{3}\oplus \CC) \subset W$ of level one such that $\PP(T_{x}\PP^{3}) \cap f(C)$ is a set of two reduced points, then every ghost conic on $Y$ are equivalent to each other and hence there is no non-trivial moduli of them. Hence $\cUzzptt^{nd}$ is not locally isomorphic to $\cMzzptt^{nd}$ along the locus parametrizes a map $(f : C \to W)$ where the domain has three tails $C_{1}$, $C_{2}$, $C_{3}$ and there is a ghost spine $C_{4}$. There are three possibilities. See Figure \ref{fig:ghostspine}.

\begin{enumerate}
\item The spine $C_{4}$ is a level one smooth cubic ghost component. 
\item $C_{4} = C_{4,1} \cup C_{4,2} \cup C_{4,3}$ is a chain of rational curves. $C_{4,1}$ has level one and degree two, $C_{4,3}$ has level one and degree one. Finally $C_{4,2}$ has level two and degree two. 
\item $C_{4} = C_{4,1} \cup \cdots \cup C_{4,5}$ is a chain of rational curves. $C_{4,1}, C_{4,3}, C_{4,5}$ are level one linear ghost components and $C_{4,2}, C_{4,4}$ are level two degree two ghost components on two different end components.
\end{enumerate}

\begin{figure}[!ht]
\begin{tikzpicture}[scale=0.6]
	\draw (-0.3,0.3) ellipse (4 and 4);
	\draw[line width=1pt] (-0.5,0.5) -- (-0.5,3.7);
	\draw[line width=1pt] (-0.5,0.5) -- (-3.6,-1.8);
	\draw[line width=1pt] (-0.5,0.5) -- (3,-2);
	\fill[color=white] (-0.5,0.5) ellipse (2 and 1.8);
	\draw (-0.5,0.5) ellipse (2 and 1.8);
	\begin{scope}
		\clip (-0.5,0.5) ellipse (2 and 1.8);
		\draw[line width=1pt] (-2.05, -0.65) .. controls(-2, 1) and (1, 1) .. (1.1, -0.6);
		\draw[line width=1pt] (-0.5,2.5) -- (-1,-0.7);
	\end{scope}
	\fill[color=white] (-1.3,0.7) ellipse (0.8 and 0.7);
	\draw (-1.3,0.7) ellipse (0.8 and 0.7);
	\draw[line width=1pt] (-0.9,0.1) .. controls (-1.7,1) and (-1.1,1) .. (-0.5, 0.6);
	\node (C41) at (0.3,-0.3){$C_{4,1}$};
	\node (C43) at (0,1.2){$C_{4,3}$};
	\node (C42) at (-1.9,0.7){$C_{4,2}$};
	\node (X) at (2,3){$\PP^{3}$};

	\draw (9.7,0.3) ellipse (4 and 4);
	\draw[line width=1pt] (9.5,0.5) -- (9.5,3.7);
	\draw[line width=1pt] (9.5,0.5) -- (6.4,-1.8);
	\draw[line width=1pt] (9.5,0.5) -- (13,-2);
	\fill[color=white] (9.5,0.5) ellipse (2.5 and 2.2);
	\draw (9.5,0.5) ellipse (2.5 and 2.2);
	\draw[line width=1pt] (9.5, 2.8) -- (9.2, -0.1);
	\draw[line width=1pt] (11.45, -0.85) -- (9, 1.5);
	\fill[color=white] (9,1.5) ellipse (0.8 and 0.7);
	\draw (9,1.5) ellipse (0.8 and 0.7);
	\draw[line width=1pt] (7.55, -0.9) -- (9, -0.5);
	\fill[color=white] (9,-0.5) ellipse (0.8 and 0.7);
	\draw (9,-0.5) ellipse (0.8 and 0.7);
	\draw[line width=1pt] (9.4, 2.1) .. controls (8.8,1.8) and (8.3,1) .. (9.6, 1);
	\draw[line width=1pt] (9.2, 0.2) .. controls (9.5, -0.3) and (9.2,-0.8) .. (8.2, -0.7);
	\node (C433) at (10.2,2.3){$C_{4,3}$};
	\node (C411) at (7.5,-0.4){$C_{4,1}$};
	\node (C44) at (8,1.5){$C_{4,4}$};
	\node (C422) at (9.5, -1){$C_{4,2}$};
	\node (C45) at (11, 0){$C_{4,5}$};
	\node (Y) at (12, 3){$\PP^{3}$};
\end{tikzpicture}
\caption{Ghost spines of type (2) and (3)}\label{fig:ghostspine}
\end{figure}
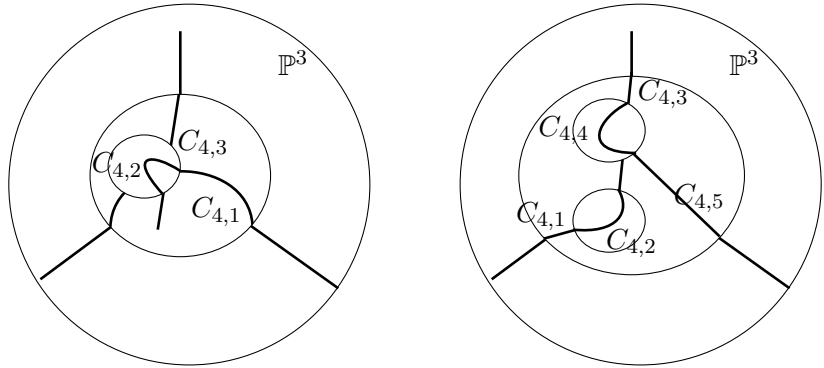

In each case, we are able to show the smoothness by computing the vanishing of the relative obstruction space (See Section \ref{ssec:deformation}). Recall that the relative obstruction is lying on 
\[
	H^{1}(C, f^{*}T_{W}^{\dagger})
\]
where $T_{W}^{\dagger}$ is the logarithmic tangent space of $W$ (\cite[Proposition 5.1.1]{KKO14}). If we decompose $C$ into the union of irreducible components $\cup C_{j}$ and if we denote $f|_{C_{j}}$ by $f_{j}$, then from the short exact sequence 
\[
	0 \to f^{*}T_{W}^{\dagger} \to \bigoplus_{j} f_{j}^{*}T_{W}^{\dagger} \to 
	\bigoplus_{\{j\ne k\}}f^{*}T_{W}^{\dagger}|_{C_{j}\cap C_{k}} \to 0
\]
and the derived long exact sequence
\[
	\bigoplus_{j} H^{0}(C_{j}, f_{j}^{*}T_{W}^{\dagger}) \to 
	\bigoplus_{\{j\ne k\}}f^{*}T_{W}^{\dagger}|_{C_{j}\cap C_{k}} \to 
	H^{1}(C, f^{*}T_{W}^{\dagger}) \to 
	\bigoplus_{j} H^{1}(C_{j}, f_{j}^{*}T_{W}^{\dagger}),
\]
it suffices to show 1) $H^{1}(C_{j}, f_{j}^{*}T_{W}^{\dagger}) = 0$ and 2) the surjectivity of $\bigoplus_{j} H^{0}(C_{j}, f_{j}^{*}T_{W}^{\dagger}) \to 
\bigoplus_{\{j\ne k\}}f^{*}T_{W}^{\dagger}|_{C_{j}\cap C_{k}}$. 

Each irreducible component $C_{j}$ is lying on an irreducible component $V$ of $W$. If $V$ is an end component (which is isomorphic to $\PP^{3}$), then we have an Euler sequence
\[
	0 \to \cO_{V} \to \cO_{V}(1)^{3} \oplus \cO_{V} \to T_{W}^{\dagger}|_{V} \to 0,
\]
and its pull-back 
\begin{equation}\label{eqn:endcomponent}
	0 \to \cO_{C_{j}} \to \cO_{C_{j}}(d)^{3}\oplus \cO_{C_{j}} \to f_{j}^{*}T_{W}^{\dagger}
	\to 0,
\end{equation}
where $d = \deg C_{j}$. Since $H^{1}(\PP^{1}, \cO_{V}(k)) = 0$ for all $k \ge -1$, we have $H^{1}(C_{j}, f_{j}^{*}T_{W}^{\dagger}) = 0$. If $V$ is a root component, then we have
\begin{equation}\label{eqn:rootcomponent}
	0 \to \cO_{V}(-E) \to \pi^{*}\cO_{\PP^{3}}(1)(-E)^{4} \to T_{W}^{\dagger}|_{V} \to 0,
\end{equation}
where $E$ is the exceptional divisor on the root component. Note that for all $f$ above, $E$ is irreducible. Since $f(C_{j})$ is a line intersects $E$, $H^{1}(C_{j}, f_{j}^{*}(\pi^{*}\cO_{\PP^{3}}(1)(-E))) = H^{1}(C_{j},\cO) = 0$. Finally, if $V$ is a screen which is not an end component, we have
\begin{equation}\label{eqn:middlecomponent}
	0 \to \cO_{V}(-E) \stackrel{\iota}{\to} 
	\pi^{*}\cO_{\PP^{3}}(1)(-E)^{3} 
	\oplus \cO_{V}(-E) \to T_{W}^{\dagger}|_{V} \to 0
\end{equation}
where $E$ is the union of exceptional divisors on $V$. In above cases, the component $f(C_{j})$ on $V$ is a conic intersecting an exceptional divisor or a line intersecting one or two exceptional divisors. In any cases, $H^{1}(C_{j}, f_{j}^{*}(\pi^{*}\cO_{\PP^{3}}(1)(-E))) = 0$ thus $H^{1}(C_{j}, f_{j}^{*}(\pi^{*}\cO_{\PP^{3}}(1)(-E)^{3}\oplus \pi^{*}\cO_{V}(-E))) \cong H^{1}(C_{j}, f_{j}^{*}(\cO_{V}(-E)))$. Thus $H^{1}(\iota)$ is surjective and $H^{1}(C_{j}, f_{j}^{*}(T_{W}^{\dagger}|_{V})) = 0$. 

For the surjectivity of 
\[
	\bigoplus_{j} H^{0}(C_{j}, f_{j}^{*}T_{W}^{\dagger}) \to 
	\bigoplus_{\{j\ne k\}}f^{*}T_{W}^{\dagger}|_{C_{j}\cap C_{k}},
\]
we will show a slightly stronger statement: for any level $\ell$ component $C_{j}$ with $\ell = 0, 2$, 
\[
	H^{0}(C_{j}, f_{j}^{*}T_{W}^{\dagger}) \to \bigoplus_{\{\ell(C_{k}) 
	= 1\}}T_{W}^{\dagger}|_{C_{j} \cap C_{k}}
\]
is surjective. If we denote the intersection point $C_{j} \cap C_{k}$ with $\ell(C_{k}) = 1$ by $x_{k}$, then it suffices to show $H^{1}(C_{j}, f_{j}^{*}(T_{W}^{\dagger}(-\sum x_{k}))) = 0$. For a level zero component, which has a unique $x_{k}$, from \eqref{eqn:rootcomponent} we have 
\[
	0 \to \cO_{C_{j}}(-2) \to \cO_{C_{j}}(-1)^{4} \to 
	f_{j}^{*}T_{W}^{\dagger}|_{C_{j}}(-x_{k}) \to 0.
\]
So $H^{1}(C_{j}, f_{j}^{*}T_{W}^{\dagger}|_{C_{j}}(-x_{k})) = 0$. For a level two component, which has two $x_{k}$'s, from \eqref{eqn:endcomponent} we have 
\[
	0 \to \cO_{C_{j}}(-2) \to \cO_{C_{j}}^{3}\oplus \cO_{C_{j}}(-2)
	\to f_{j}^{*}T_{W}^{\dagger}(-\sum x_{k}) \to 0.
\]
We get the vanishing of $H^{1}(C_{j}, f_{j}^{*}T_{W}^{\dagger}(-\sum x_{k}))$ in a similar manner. 
\end{proof}

Let $J^{s} := {S'}^{-1}(I^{s})$ and $J$ be the closure of $J^{s}$ in $\Uzzptt \times (\PP^{3})^{7}$. Then $J$ is the main component of the `incidence subspace' in $\Uzzptt \times (\PP^{3})^{7}$. $J$ and $J^{s}$ are both $\SL_{4}$-invariant subspaces. 

\begin{lemma}\label{lem:schemeness}
\begin{enumerate}
\item The algebraic space $J^{s}$ is a quasi-projective scheme.
\item There is a linearization $L'$ on $J^{s}$ such that for every closed point $x \in J^{s}$, there is a section $s \in H^{0}(J^{s}, L^{m})$ such that $s(x) \ne 0$. In other words, $(J^{s})^{ss}(L') = J^{s}$. 
\end{enumerate}
\end{lemma}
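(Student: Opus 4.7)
The plan is to establish both claims by reducing to the GIT setup on the quasi-projective scheme $I^{s} \subset \Mzzptt \times (\PP^{3})^{7}$ via the restricted stabilization morphism $S'|_{J^{s}} \colon J^{s} \to I^{s}$, and to construct the linearization by pullback plus a correction from a relatively ample class.

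First I would observe that $J^{s}$ lies inside the smooth locus $\Uzzptt^{nd} \times (\PP^{3})^{7}$. Indeed, every point of $I^{s}$ represents a stable map with non-degenerate image, since the Veronese quotient has $\gamma = 0$ (see the remark in Section~\ref{ssec:Veronesequotients}). Hence $S'(J^{s}) \subset \Mzzptt^{nd} \times (\PP^{3})^{7}$, and by definition of $\Uzzptt^{nd} = S^{-1}(\Mzzptt^{nd})$ we get $J^{s} \subset \Uzzptt^{nd} \times (\PP^{3})^{7}$. By Lemma~\ref{lem:ndlocusissmooth}, $J^{s}$ is then a smooth algebraic space.

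Next I would prove that $S'|_{J^{s}}$ is a projective morphism onto the quasi-projective scheme $I^{s}$. Properness is inherited from properness of $\cUzzptt \to \cMzzptt$. For relative ampleness, I would exploit the morphism $T$ of Proposition~\ref{prop:mortotangent}: a fiber of $S'|_{J^{s}}$ over $(f \colon C \to \PP^{3}, x_{1}, \ldots, x_{7})$ parameterizes compatible ghost configurations on the compactified tangent spaces $\PP(T_{x}\PP^{3} \oplus \CC)$ at the degenerating points, and these configurations embed (via $T$) into products of the projective moduli schemes $\overline{\mathrm{M}}_{0,k}(\PP(T\PP^{3}), \beta')$. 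Pulling back canonical polarizations yields an $S'$-relatively ample $\SL_{4}$-linearized line bundle $M$ on $J^{s}$. Combined with quasi-projectivity of $I^{s}$ (an open subscheme of the projective incidence variety $I$), this proves (1). For (2), let $L$ be an ample $\SL_{4}$-linearized line bundle on $I^{s}$ realizing $I^{ss}(L) = I^{s}$, and set $L' := (S')^{*}L^{\otimes N} \otimes M$ for $N \gg 0$; this is ample and $\SL_{4}$-equivariant. For any $x \in J^{s}$, the semistability of $S'(x)$ supplies an $\SL_{4}$-invariant $s_{0} \in H^{0}(I^{s}, L^{m})$ with $s_{0}(S'(x)) \neq 0$, and $(S')^{*}s_{0}$ tensored with a fiberwise nonvanishing invariant section of $M^{m}$ (available after raising $m$) gives the required section of $(L')^{m}$.

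The main obstacle I anticipate is the relative projectivity of $S'|_{J^{s}}$, since the paper explicitly flags that basic geometric properties of $\cUzzptt$ (e.g.\ projectivity of its coarse moduli space) are not established. Constructing $M$ rigorously requires a hands-on analysis of the ghost configurations allowed in $\cUzvptt^{nd}$ and careful verification that the fiberwise embedding induced by $T$ globalizes over $I^{s}$ and that the pulled-back polarization is genuinely $S'$-ample rather than only fiberwise ample; the combinatorics of which classes $\beta'$ occur on each end or ruled component will likely demand the same kind of case analysis used in the proof of Lemma~\ref{lem:ndlocusissmooth}.
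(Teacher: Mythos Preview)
Your overall strategy—use the tangent map $T$ of Proposition~\ref{prop:mortotangent} together with the GIT setup downstairs—is the same as the paper's, but the execution diverges at the crucial step, and the paper's route is both shorter and avoids the obstacle you flag.

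You try to show that $S'|_{J^{s}} \colon J^{s} \to I^{s}$ is projective by producing an $S'$-relatively ample bundle from $T$. The paper instead proves directly that
\[
	T \colon \Uzzptt^{nd} \longrightarrow \overline{\mathrm{M}}_{0,0}(\PP(T\PP^{3}), (3,10))
\]
is \emph{finite}: it checks quasi-finiteness by an explicit local computation on a two-parameter versal family of twisted cubics through three fixed points at infinity (the only subtle case is a degree~$3$ ghost component, where one verifies that the projectivized tangent image separates nearby curves), and then properness of $\cUzzptt$ plus separatedness of the target upgrade quasi-finite to finite. Since the target is a projective scheme, $\Uzzptt^{nd}$ is a quasi-projective \emph{scheme} outright, and $J^{s}$, being locally closed in $\Uzzptt^{nd} \times (\PP^{3})^{7}$, is quasi-projective too. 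This sidesteps entirely the question of relative projectivity of $S'$ that you identify as the main obstacle.

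For part~(2) the paper then works with the projective $\SL_{4}$-equivariant morphism
\[
	\overline{\mathrm{M}}_{0,0}(\PP(T\PP^{3}), (3,10)) \times (\PP^{3})^{7} \longrightarrow \Mzzptt \times (\PP^{3})^{7}
\]
and applies the Kirwan/Hu result you cite to get a linearization $L'$ with stable locus equal to the preimage of the downstairs stable locus; pulling back to $J^{s}$ finishes. Your recipe $L' = (S')^{*}L^{\otimes N} \otimes M$ would also work once you know $M$ is genuinely relatively ample, but establishing that amounts to proving the same finiteness of $T$—your claim that the ghost configurations ``embed via $T$'' is not quite right (the map is only quasi-finite, not injective), and making it precise is exactly the explicit computation the paper carries out.
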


\begin{proof}
By local computation, we can check that the tangent map in Proposition \ref{prop:mortotangent}
\[
	T :  \Uzzptt^{nd} \to \overline{\mathrm{M}}_{0,0}(\PP(T\PP^{3}), (3, 10))
\]
is quasi-finite. Indeed, it may not be injective when $f : C \to W$ has a ghost component of degree 3. Take a rational normal curve $N$ in a non-rigid $\PP^{3} = \{[x:y:z:w]\}$ passing through three coordinate points on the infinite plane $\{x = 0\}$. By using an automorphism of $\PP^{3}$, we may assume that $N$ passes through $p = [1:0:0:0]$. Furthermore, if we fix the image of the tangent map at $p$, or equivalently, the tangent direction at $p$, we have a 2-dimensional family of rational normal curves. We can take an explicit 2-dimensional versal family, for instance, 
\[
	f_{a,b}(s:t) = [(t-3s)(t-s)(t-2s)s: t(at-s)(t-2s)s:
	t(t-s)(4t-s)(t-2s):t(bt-2)(2t-s)s].
\]
By using a computer algebra system, it is straightforward to check that $\PP(Tf_{a,b})([1:0]) = [1:-1:1]$ is independent from $a$ and $b$, but for two $(a,b) \ne (a',b')$, the tangent vectors to $\PP(Tf_{a,b})(\PP^{1})$ and $\PP(Tf_{a',b'})(\PP^{1})$ at $[1:-1:1]$ are different. Thus $T$ is analytic locally injective if $f$ has an irreducible ghost component. The remaining cases are easy to check. 

Since the target of $T$ is a scheme, $\Uzzptt^{nd}$ is a scheme by \cite[Corollary II.6.16]{Knu71}. Furthermore, $\Uzzptt$ is proper and $\overline{\mathrm{M}}_{0,0}(\PP(T\PP^{3}), (3, 10))$ is separated. Thus $T$ is a proper morphism (\cite[Corollary II.4.8]{Har77}). Hence $T$ (restricted to $\Uzzptt^{nd}$) is finite (\cite[Theorem 8.11.1]{EGA4-3}). Thus $T$ is projective (\cite[Corollary 6.1.11]{EGA2}) hence $\Uzzptt^{nd}$ is quasi-projective.

Note that $J^{s} \subset \Uzzptt^{nd} \times (\PP^{3})^{7}$. Since $J^{s}$ is a locally closed subspace of a quasi-projective scheme, it is quasi-projective, too. This proves (1). 

Note that we have a commutative diagram
\[
	\xymatrix{J^{s} \ar[r] \ar[d] & 
	\overline{\mathrm{M}}_{0,0}(\PP(T\PP^{3}), (3,10)) \times 
	(\PP^{3})^{7}\ar[d]^{F}\\
	I^{s} \ar[r] & \Mzzptt \times (\PP^{3})^{7}.}
\]
Since $F$ is a projective morphism, by \cite[Theorem 3.11]{Hu96}, there is a linearization $L'$ on $X := \overline{\mathrm{M}}_{0,0}(\PP(T\PP^{3}), (3,10)) \times (\PP^{3})^{7}$ such that $X^{ss}(L') = X^{s}(L') = F^{-1}((\Mzzptt \times (\PP^{3})^{7})^{s}(L))$. Since $I^{s}$ is in the stable locus of $\Mzzptt \times (\PP^{3})^{7}$, $J^{s}$ maps to the stable locus of $X$. Therefore the pull-back of $L'$ to $J^{s}$ is the linearization we want to find. 
\end{proof}

Therefore by gluing the categorial quotients of affine $\SL_{4}$-invariant subschemes, we obtain a well-defined quotient scheme $J^{s}/\SL_{4}$.

\begin{definition}
The \textbf{formal GIT quotient} $J\git \SL_{4}$ is $J^{s}/\SL_{4}$.
\end{definition}

\begin{remark}
Note that if $\Uzzptt$ is a projective scheme, then for a standard choice of linearization $L'$ on $\Uzzptt \times (\PP^{3})^{7}$, $J\git_{L'}\SL_{4} \cong J^{s}/\SL_{4}$. So far, we don't know the projectivity of $\Uznprd$. We will investigate geometric properties of this moduli space in forthcoming papers. 
\end{remark}

\begin{lemma}\label{lem:normality}
The locus $J^{s}$ is normal. 
\end{lemma}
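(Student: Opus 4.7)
The plan is to invoke Serre's criterion, showing $J^{s}$ is both Cohen--Macaulay and regular in codimension one. The crucial starting observation is that $J^{s}$ is contained in the smooth ambient space $\Uzzptt^{nd} \times (\PP^{3})^{7}$. Indeed, GIT semistability on the incidence variety $I$ forces the underlying stable map to have non-degenerate image in $\PP^{3}$ (a standard fact in the Veronese-quotient setup of Section \ref{ssec:Veronesequotients}), so $I^{s} \subseteq \Mzzptt^{nd} \times (\PP^{3})^{7}$, and pulling back under the stabilization map gives $J^{s} = (S')^{-1}(I^{s}) \subseteq \Uzzptt^{nd} \times (\PP^{3})^{7}$, which is smooth by Lemma \ref{lem:ndlocusissmooth}.

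For the $S_{2}$ condition, I view $J^{s}$ as the intersection inside this smooth ambient of the seven incidence subschemes $\{x_{i} \in (\pi_{W/X}\circ f)(C)\}$. Each such condition is locally of codimension two (a point of $\PP^{3}$ required to lie on a fixed cubic), giving $\codim J^{s} \le 14$. A direct dimension count --- $\dim \Uzzptt^{nd} = 12$ plus seven one-parameter families of $x_{i}$ along the image curve --- shows $\dim J^{s} = 19$, matching the expected codimension $14$. Hence $J^{s}$ is a local complete intersection in a smooth ambient space, so Cohen--Macaulay. For the $R_{1}$ condition, let $V \subseteq J^{s}$ be the open locus where $(f : C \to W)$ is a smooth rational cubic (so $W = \PP^{3}$ and $C \cong \PP^{1}$) and the seven $x_{i}$ are distinct smooth points of the image. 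On $V$ the incidence conditions are manifestly transversal, so $V$ is smooth. The complement $J^{s} \setminus V$ decomposes into strata where either (i) $(f : C \to W)$ lies in a deeper (codimension $\ge 1$) stratum of $\Uzzptt^{nd}$, in which case the rigidification of the $x_{i}$'s by the incidence conditions forces an additional codimension, giving codimension $\ge 2$ in $J^{s}$, or (ii) two $x_{i}$'s collide or an $x_{i}$ lands at a special point of the image cubic, which is codimension $\ge 3$ in $(\PP^{3})^{7}$ and yields codimension $\ge 2$ in $J^{s}$. Thus $J^{s}$ is regular in codimension one.

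By Serre's criterion, $J^{s}$ is therefore normal. The main obstacle is the codimension analysis in the $R_{1}$ step, particularly verifying that no combined stratum (degenerate map together with a special $x_{i}$ configuration) produces a codimension-one singular locus --- this reduces to a finite stratum-by-stratum enumeration, tractable precisely because $\Uzzptt^{nd} \times (\PP^{3})^{7}$ is globally smooth, so the only possible obstruction to smoothness of $J^{s}$ is failure of transversality of the seven defining equations, and this failure can be bounded directly from the codimensions of the relevant strata in $\Uzzptt^{nd}$ and $(\PP^{3})^{7}$.
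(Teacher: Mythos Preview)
Your overall strategy (Serre's criterion on $J^{s}$ inside the smooth ambient $\Uzzptt^{nd}\times(\PP^{3})^{7}$) is reasonable, but both halves of the argument have gaps.

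For $R_{1}$: your claim that the complement $J^{s}\setminus V$ has codimension $\ge 2$ is simply false. The locus in $\Uzzptt^{nd}$ where the image cubic is reducible (e.g.\ a conic meeting a line) is a divisor, and over its generic point the seven $x_{i}$'s can still lie at generic smooth points of the image; this produces a codimension-one stratum of $J^{s}$ outside $V$. There is no ``additional codimension forced by rigidification'' here. What you actually need is that $J^{s}$ is \emph{smooth} along the generic point of each boundary divisor, which is a transversality statement you have not addressed.

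For $S_{2}$: the local complete intersection claim is not justified. At a point where the image $\pi_{W/X}\circ f(C)$ is three concurrent lines, that curve is \emph{not} a local complete intersection in $\PP^{3}$ at the triple point (it needs three equations for a codimension-two locus). So the single incidence $\{x_{i}\in\text{image}\}$ is already not cut out by two equations there, and your product argument for codimension $14$ by $14$ equations breaks down. Matching the expected dimension does not repair this: it gives you the right codimension but not the LCI property.

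The paper sidesteps both issues by an inductive fibration argument: set $J(0)=\Uzzptt^{nd}$ and let $J(n)\to J(n-1)$ forget the last point. Each such map has fibers isomorphic to the image cubic, hence has constant Hilbert polynomial $3m+1$ and is therefore flat. Flatness over an $S_{2}$ base with $S_{1}$ (one-dimensional, reduced) fibers gives $S_{2}$ on the total space, and the generic fiber being a smooth $\PP^{1}$ propagates $R_{1}$. This avoids ever asserting LCI and never requires a stratum-by-stratum singular-locus analysis.
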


\begin{proof}
Set $J(0) = \Uzzptt^{nd}$ and for $n \in \NN$, let $J(n) = \{ ((f : C \to W), x_{1}, x_{2}, \cdots, x_{n})\;|\; x_{i} \in \pi \circ f(C)\} \subset \Uzzptt^{nd} \times (\PP^{3})^{n}$ for $\pi : W \to \PP^{3}$. We claim that $J(n)$ is normal. Note that $J(0)$ is normal by Lemma \ref{lem:ndlocusissmooth}. 

Let $p_{n} : J(n) \to J(n-1)$ be the projection map forgetting the last point. Then for any point $((f : C \to W), x_{1}, x_{2}, \cdots, x_{n-1}) \in J(n-1)$, the fiber is isomorphic to $\pi\circ f(C) \subset \PP^{3}$. Since the Hilbert polynomial $P_{\pi \circ f(C)}(m) = 3m+1$ is constant, $p_{n}$ is flat by \cite[Theorem III.9.9]{Har77}. 

Note that a general fiber of $p_{n}$ is smooth because a general element of $J(n-1)$ parametrizes a smooth rational curve. So $J(n)$ is regular in codimension one if $J(n-1)$ is. Also since all fibers are curves, it automatically satisfies Serre's condition $S_{2}$. Therefore $J(n)$ satisfies $S_{2}$ by \cite[Corollary 6.4.2]{EGA4-2}. By Serre's criterion, $J(n)$ is normal if $J(n-1)$ is. 

Since $J^{s}$ is an open subset of $J(7)$, we have the desired result.
\end{proof}

Now we prove the second main result of this paper. 

\begin{theorem}\label{thm:modulardescription}
The formal GIT quotient $J\git \SL_{4}$ is isomorphic to $\Mzv^{3}$.
\end{theorem}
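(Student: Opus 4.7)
The plan is to construct a natural projective birational morphism $\Phi : J\git \SL_{4} \to V_{A}^{3}$ via stabilization of unramified stable maps, to check that $\Phi$ has $\PP^{2}$ fibers exactly over the non-Gorenstein locus $Z := \phi(B_{2}^{3}) \subset V_{A}^{3}$, and then to identify $\Phi$ with the small contraction $\phi_{3}' : \Mzv^{3} \to V_{A}^{3}$ from Proposition \ref{prop:firstflip}.

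First I would construct $\Phi$. By Proposition \ref{prop:stabilization} the stabilization functor $S : \cUzzptt \to \cMzzptt$ together with the evaluation maps induces an $\SL_{4}$-equivariant morphism $J^{s} \to I^{s}$ compatible with the incidence structure; passing to quotients (well-defined as a projective scheme by Lemma \ref{lem:schemeness}) yields $\Phi$. Since stabilization is an isomorphism away from the locus carrying ghost components, $\Phi$ restricts to an isomorphism over $V_{A}^{3} \setminus Z$. To analyze the fibers over a point $z \in Z$, recall from Example \ref{ex:Veronesequotient} that $z$ is the class of a configuration of three concurrent lines in $\PP^{3}$ meeting at a triple point $p$, with two marked points on each line and one marked point at $p$. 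A point of $\Phi^{-1}(z)$ is the class of an unramified stable map $f: C \to W$ stabilizing to this configuration; the FM degeneration space $W$ must acquire an end component $Y \cong \PP(T_{p}\PP^{3} \oplus \CC)$ at $p$, and $C$ must carry a ghost twisted cubic $C'$ on $Y$ meeting the hyperplane at infinity $\PP(T_{p}\PP^{3}) \cong \PP^{2}$ transversally at the three tangent directions of the lines (the transversality is forced by Definition \ref{def:unramifiedstablemap}(7) since the lines themselves meet the exceptional divisor of the root component transversally). Such twisted cubics form a $6$-dimensional family, and quotienting by the automorphism group $\CC^{3} \rtimes \CC^{*}$ of $Y/X$ yields a $2$-dimensional projective variety which I would verify to be exactly $\PP^{2}$, matching the exceptional $\PP^{2}$ fibers of $\phi_{3}'$.

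To finish the identification, I would construct an $\SL_{4}$-equivariant family of stable unramified maps over the blow-up $\widetilde{\mathrm{M}}_{0,7}^{3}$ from Proposition \ref{prop:firstflip}: over the complement of the exceptional divisor the family is the universal $\Mzv$-family viewed as a map to the trivial FM degeneration $W = \PP^{3}$; over the exceptional divisor $\bigsqcup \PP^{1} \times \PP^{2}$ the $\PP^{2}$-factor literally parametrizes the ghost twisted cubics above, while the $\PP^{1}$-factor (the cross-ratio of the four special points on the original $F_{1,2,2,2}$-spine) becomes irrelevant once that spine is stabilized away. The induced $\SL_{4}$-equivariant morphism $\widetilde{\mathrm{M}}_{0,7}^{3} \to J^{s}$ therefore descends through $\pi_{3}'$ to a morphism $\Mzv^{3} \to J\git \SL_{4}$ over $V_{A}^{3}$. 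This morphism is bijective on closed points (an isomorphism over $V_{A}^{3} \setminus Z$, and a degree-one map $\PP^{2} \to \PP^{2}$ on each fiber over $Z$), hence an isomorphism by Zariski's main theorem together with normality (Lemma \ref{lem:normality}). The main obstacle is twofold: rigorously identifying the quotient of ghost twisted cubics as $\PP^{2}$ rather than an abstract $2$-dimensional variety, and functorially constructing the universal family of stable unramified maps over the exceptional $\PP^{1} \times \PP^{2}$, for which the ghost cubics must be glued to the three proper transforms of the lines with matched ramification indices in a way that varies algebraically with the $\PP^{2}$-parameter.
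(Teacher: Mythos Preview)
Your approach differs substantially from the paper's, and the gaps you flag are precisely where the paper's argument takes a different and more economical route.

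The paper constructs the morphism in the \emph{opposite} direction: it produces $\overline{Q}: J\git\SL_{4} \to \Mzv^{3}$ rather than $\Mzv^{3} \to J\git\SL_{4}$. The key device is the seven-pointed space $\Uzvptt^{s} := S^{-1}(\Mzvptt^{s})$, which serves as a common roof. On one side, the composite $g: \Uzvptt^{s} \to \Mzvptt^{s} \to \Mzv$ lifts to $\widetilde{\mathrm{M}}_{0,7}^{3}$ by the universal property of blow-up, once one checks that $g^{-1}(F_{1,2,2,2})$ is Cartier (this reduces to a smoothness statement on $\Uzzptt^{nd}$, which is Lemma~\ref{lem:ndlocusissmooth}). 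On the other side, the forgetful map $F': \Uzvptt^{s} \to J^{s}$ is an algebraic fiber space by normality of $J^{s}$, and its positive-dimensional fibers (varying the unique marked point on the ghost spine) are exactly the curves contracted by $\pi_{3}'$. Rigidity then furnishes $Q: J^{s} \to \Mzv^{3}$, hence $\overline{Q}$ after passing to the quotient; finally a dimension count shows $\overline{Q}$ is quasi-finite birational onto a smooth target, hence an isomorphism. No explicit identification of the fiber as $\PP^{2}$ is ever needed.

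Your plan, by contrast, requires building a family of unramified stable maps over $\widetilde{\mathrm{M}}_{0,7}^{3}$. As written this has a conceptual problem: $\widetilde{\mathrm{M}}_{0,7}^{3}$ and $\Mzv^{3}$ are already $\SL_{4}$-quotients and carry no universal map to $\PP^{3}$, so there is no ``$\SL_{4}$-equivariant morphism $\widetilde{\mathrm{M}}_{0,7}^{3} \to J^{s}$'' to speak of. You would have to work instead over a pre-quotient such as a blow-up of $\Mzvptt^{s}$, which essentially reconstructs the paper's roof $\Uzvptt^{s}$ by hand. Moreover, your fiber computation must also account for the degenerate ghost spines of types (2) and (3) in Figure~\ref{fig:ghostspine}, not only smooth twisted cubics, making the direct identification with $\PP^{2}$ genuinely delicate. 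The paper's use of $\Uzvptt^{s}$ and the blow-up universal property dissolves both of your stated obstacles at once.
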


\begin{proof}
Let $\Mzvptt^{s}=F^{-1}(I^{s}) \subset \Mzvptt$ and let $\Uzvptt^{s}=S^{-1}(\Mzvptt^{s}) \subset \Uzvptt$. We have the following diagram:
\[
	\xymatrix{\Uzvptt^{s} \ar[d]_{S} \ar[rd]_{g} \ar@{-->}[rrd] & \\ 
	\Mzvptt^{s} \ar[d]_{F} \ar[r]_(0.6){/\SL_{4}}
	& \Mzv \ar[d]_{\phi} 
	& \widetilde{\mathrm{M}}_{0,7}^{3}
	\ar[l]^{\pi_{3}} \ar[d]^{\pi_{3}'}\\
	I^{s} \ar[r]^{/\SL_{4}} &V_{A}^{3} & \Mzv^{3}\ar[l]_{\phi_{3}'}}
\]
We first show that there is a morphism $\tilde{g} : \Uzvptt^{s} \to \widetilde{\mathrm{M}}_{0,7}^{3}$. Because $\pi_{3}$ is the blow-up along F-curves of type $F_{1,2,2,2}$, from the universal property of blow-up, it is enough to show that $g^{-1}(F_{1,2,2,2})$ is a Cartier divisor in $\Uzvptt^{s}$.

Let $Z^{0} \subset \Uzzptt^{nd}$ be the locally closed subvariety parametrizes $f : C \to W$ such that the domain $C$ has three tails $C_{1}$, $C_{2}$, $C_{3}$ of degree one and an irreducible spine $C_{0}$ which is a ghost component of level one. Let $Z$ be the closure of $Z^{0}$. To obtain $f \in Z^{0}$, we need to choose three lines $C_{1}$, $C_{2}$, and $C_{3}$ on $\PP^{3}$ meet at a point, and a cubic rational normal curve $C_{0}$ in a \emph{non-rigid} $\PP^{3}$ which passes through three points at rigid $\PP^{2} \subset \PP^{3}$. Thus the dimension of $Z^{0}$ is $3+3\cdot 2 + (12-3\cdot 2)-4 = 11$. Hence $Z^{0}$ and $Z$ have codimension one in $\Uzzptt^{nd}$. Because $\Uzzptt^{nd}$ is smooth (Lemma \ref{lem:ndlocusissmooth}), $Z$ is a Cartier divisor. On the other hand, for $F : \Uzvptt \to \Uzzptt$, $F(\Uzvptt^{s}) \subset \Uzzptt^{nd}$ since $\pi \circ f(C)$ is non-degenerated for all $f: C \to W$ in $\Uzvptt^{s}$. Finally, for the forgetful map $F : \Uzvptt^{s} \to \Uzzptt^{nd}$, it is straightforward to check that $g^{-1}(F_{1,2,2,2}) = F^{-1}(Z)$. Therefore $g^{-1}(F_{1,2,2,2})$ is a Cartier divisor as well. Thus we have a morphism $\tilde{g} : \Uzvptt^{s} \to \widetilde{\mathrm{M}}_{0,7}^{3}$. Let $\bar{g}= \pi_{3}' \circ \tilde{g} : \Uzvptt^{s} \to \Mzv^{3}$. 

The forgetful map $F' : \Uzvptt^{s} \to \Uzzptt \times (\PP^{3})^{7}$ factors through $J^{s}$, because $S'\circ F'(\Uzvptt^{s}) = F \circ S(\Uzvptt^{s}) = I^{s}$ and $J^{s} = {S'}^{-1}(I^{s})$. We have an algebraic fiber space $\Uzvptt^{s} \to J^{s}$ because $J^{s}$ is normal (\cite[Proof of Corollary III.11.4]{Har77}). The only possible exceptional curve $E$ for $\Uzvptt^{s} \to J^{s}$ is obtained by varying a unique marked point on a ghost component, hence varying the cross-ratio of them. $E$ is contracted by $\bar{g} : \Uzvptt^{s} \to \Mzv^{3}$ because $\bar{g} = \pi_{3}' \circ \tilde{g}$ and $\pi_{3}' : \widetilde{\mathrm{M}}_{0,7}^{3} \to \Mzv^{3}$ forgets the cross-ratio. Therefore there is a morphism $Q: J^{s} \to \Mzv^{3}$ (\cite[Proposition II.5.3]{Kol96}). Finally, because it is $\SL_{4}$-equivariant, there is a quotient map $\overline{Q}: J\git\SL_{4} = J^{s}/\SL_{4} \to \Mzv^{3}$ and a commutative diagram
\[
	\xymatrix{J\git\SL_{4}\ar[r]^{\overline{Q}}\ar[d] 
	& \Mzv^{3}\ar[d]^{\phi_{3}'}\\
	I \git_{L}\SL_{4} \ar[r]^{\cong} & V_{A}^{3}.}
\]
On a point $x$ of the exceptional locus of $\phi_{3}':\Mzv^{3} \to V_{A}^{3}$, from a dimension counting, it is straightforward to check that the inverse image $\overline{Q}^{-1}(x)$ does not have a positive dimensional moduli. Also on the outside of the exceptional locus, they are isomorphic. Thus $\overline{Q}$ is a quasi-finite birational morphism to a smooth variety. So it is an isomorphism by \cite[Proposition III.9.1]{Mum99}. 
\end{proof}

\begin{remark}\label{rem:modulardescription}
We may describe an object in $J\git\SL_{4}$ in an intrinsic way. For $(f : C \to W) \in \cUzzptt^{nd}$, suppose that the image of $\pi \circ f : C \to W \to \PP^{3}$ has a non-Gorenstein singularity at $x \in \im \pi \circ f(C)$. There are three irreducible components meet at $x$. The level one component $Y = \PP(T_{x}\PP^{3}\oplus \CC)$ of $W$ at $x$ can be regarded as a compactified non-rigid tangent space $\PP(T_{x}C\oplus \CC)$, because the three irreducible components generate $\PP^{3}$. Hence the infinitesimal structure we can give on the non-Gorenstein singularity $x \in C$, as an answer for Questions \ref{que:infinitesimalstructure} and \ref{que:fitintodiagram}, is a ghost rational cubic curve (and its degeneration) on a compactified non-rigid tangent space of $C$ at $x$. 
\end{remark}

\begin{remark}\label{rem:largen}
\begin{enumerate}
\item It would be very interesting if one can define $J\git\SL_{4}$ as a moduli stack directly, instead of describing it as a quotient stack of a certain moduli stack.
\item The similar modular flip appears for every $n \ge 7$. For example, if we consider a $D$-filp for the total boundary divisor $B$ on $\Mzn$, then the flipping locus contains the  locus covered by $F_{1, i,j,k}$ where $i,j,k \ge 2$. Therefore it is inevitable to study such flips in general, if we would like to study full symmetric Mori's program for $\Mzn$.
\end{enumerate}
\end{remark}


\bibliographystyle{alpha}
\bibliography{Library}

\end{document}